\providecommand{\R}{}
\renewcommand{\R}{\mathbb{R}}
\newcommand{\set}[1]{\left\{ #1 \right\}}
\newcommand{\Esub}[2]{{\mathbf E_{#1}}\left[#2\right]}
\newcommand\cB{\mathcal B}
\newcommand\cF{\mathcal F}
\newcommand\cH{\mathcal H}
\newcommand\cP{\mathcal P}
\newcommand\cU{{\mathcal U}}
\newcommand{\bE}{\mathbf{E}}
\newcommand{\bP}{\mathbf{P}} 
\newcommand{\bQ}{\mathbf{Q}} 
\newcommand{\bR}{\mathbf{R}}
\newcommand{\f}{\mathfrak{f}} 
\newcommand{\g}{\mathfrak{g}} 
\renewcommand{\t}{\mathbf{\tau}}
\renewcommand{\c}{\mathfrak{c}}   % cemetery
\newcommand{\asymbol}[1]{\ooalign{$#1\circledcirc$\cr\hidewidth$#1\bullet$\hidewidth}}
\renewcommand{\a}{\mathbin{\mathpalette\asymbol{}}}
\renewcommand{\d}{\otimes}   % dead regime (lambda 2)
\newcommand{\dbleft}{[\![}
\newcommand{\dbright}{]\!]}
\newcommand{\db}[1]{\dbleft #1 \dbright}
\newcommand{\1}{\textup{(1)}} % Scenario 1 
\newcommand{\2}{\textup{(2)}}   % Scenario 2 
\newcommand{\3}{\textup{(3)}}   % Scenario 3 
\newcommand{\4}{\textup{(4)}}    % Scenario 4 
\newcommand{\rSet}{\set{\a,\d}} %symbol for the regime set
\newcommand{\pran}[1]{\left(#1\right)}
\newcommand{\brac}[1]{\left[#1\right]}
\newcommand{\inn}[1]{\langle#1\rangle}
\newcommand{\abs}[1]{\left|#1\right|}
\newcommand{\KL}[2]{\mathrm{KL}(#1||#2)}
\providecommand{\eps}{}
\renewcommand{\eps}{\varepsilon}
\providecommand{\ora}[1]{}
\renewcommand{\ora}[1]{\overrightarrow{#1}}
\newcommand\one{\mathbbm{1}}
\newcommand\proc{\set{(X_t,\Lambda_t)}_{[0,T]}}
\newcommand\aug[1]{\db{#1,0}}
\newcommand\sce[1]{\textup{Scenario (#1)}}
\newcommand\kt{\mathfrak{K}} % killing time
\newcommand{\customlabel}[2]{%
   \protected@write \@auxout {}{\string \newlabel {#1}{{#2}{\thepage}{#2}{#1}{}} }%
   \hypertarget{#1}{}
}
\newtheorem{thm}{Theorem}
\newtheorem{lem}[thm]{Lemma}
\newtheorem{prop}[thm]{Proposition}
\newtheorem{cor}[thm]{Corollary}
\newtheorem{dfn}[thm]{Definition}
\numberwithin{thm}{subsection}
\numberwithin{equation}{subsection}
\definecolor{clc}{rgb}{0,0.53,0.74}
\definecolor{caz}{rgb}{0,0.5,0}
\begin{document}
\begin{abstract}
    The unbalanced Schr\"odinger bridge problem (uSBP) seeks to interpolate between a probability measure $\rho_0$ and a sub-probability measure $\rho_T$ while minimizing KL divergence to a reference measure $\bR$ on a path space. In this work, we investigate the case where $\bR$ is the path measure of a diffusion process with killing, which we interpret as a regime-switching diffusion. In addition to matching the initial and terminal distributions of trajectories that survive up to time $T$, we consider a general constraint $\psi(t,x)$ on the distribution of killing times and/or killing locations. 
    
    We investigate the uSBPs corresponding to four choices of $\psi$ in detail which reflect different levels of information available to an observer. We also provide a rigorous analysis of the connections and the comparisons among the outcomes of these four cases. Our results are novel in the field of uSBP. The regime-switching approach proposed in this work provides a unified framework for tackling different uSBP scenarios, which not only reconciles but also extends the existing literature on uSBP.
\end{abstract}
\keywords{unbalanced Schr\"odinger bridge, regime-switching diffusion, diffusion process with killing, stochastic control, KL divergence}
\subjclass{35Q93, 60H10, 60H30, 60J60, 93E20, 94A17}
\title{A Regime-Switching Approach to the Unbalanced Schr\"odinger Bridge Problem}
\author{Andrei Zlotchevski$^{1,\dagger}$}
\author{Linan Chen$^{1,\dagger,\ast}$}
\address{$^{1}$Department of Mathematics and Statistics, McGill University, 
Montreal, Canada}
\address{$^{\ast}$Corresponding author}
\address{$^{\dagger}$Equal contributors}
\thanks{We acknowledge the support of the Natural Sciences and Engineering Research Council of Canada (NSERC), awards 559387-2021 and 241023.}
\email{andrei.zlotchevski@mail.mcgill.ca}

\email{linan.chen@mcgill.ca}

\maketitle

\section{Introduction}
This paper is dedicated to the unbalanced Schr\"odinger bridge problem (uSBP) for diffusion processes. Roughly speaking, given probability distributions $\rho_0$ and $\rho_T$ the classic Schr\"odinger bridge problem (SBP) aims to ``bridge'' the two distributions in a way that minimizes KL divergence (relative entropy) to a reference path measure $\bR$. The unbalanced Schr\"odinger bridge problem, on the other hand, considers the case where $\rho_T$ is a sub-probability measure (i.e. with total mass less than 1). In this Introduction, we shall describe the type of reference measure under consideration and then rigorously formulate the uSBP as a classic SBP with regime switching.

There are two main objectives to this work. First, we seek to establish a fundamental theory of the diffusion uSBP that can model a broad range of constraints imposed on the distribution of killed paths. By applying the theory of the SBP for (jump) diffusions with regime switching \cite{zlotchevski2025schrodinger}, we will thoroughly investigate the uSBP for diffusions and derive many of its properties in full generality. Then, we aim to connect the existing uSBP literature \cite{chen2022most,eldesoukey2024excursion,eldesoukey2025inferring} under the regime-switching framework adopted in this work and demonstrate how these different problems are directly related. In fact, our approach will allow us to recover the existing results from other uSBP research and to offer new insight on some aspects of the uSBP.

\subsection{Set-up of Our Model}
Our model is set on a ``hybrid'' state space $\R^d\times \rSet$ over a finite time interval $t\in[0,T]$. Here, $\a$ and $\d$ are called the \textit{regimes} of the process, where $\a$ is the ``active'' regime and $\d$ is the ``dead'' regime. Let $\Omega$ be a certain class, which will be specifically defined later, of c\`adl\`ag paths $\omega : t\in[0,T] \mapsto(\omega^X(t),\omega^{\Lambda}(t))\in \R^d\times\rSet$, equipped with the Skorokhod metric. For every $t\in[0,T]$, define $(X_t,\Lambda_t):\Omega\rightarrow\R^d\times\rSet$ to be the time projection such that, for every $\omega\in\Omega$, $X_t(\omega)=\omega^X(t)$ gives the spatial position of the path, and $\Lambda_t(\omega)=\omega^\Lambda(t)$ is the regime of the path. If $\cF:=\cB_\Omega$ is the Borel $\sigma$-algebra on $\Omega$, then $\cF$ is generated by the time projections $\set{(X_t,\Lambda_t)}_{[0,T]}$. Let $\set{\cF_t}_{[0,T]}$ be the natural filtration (or its usual augmentation) associated with $\set{(X_t,\Lambda_t)}_{[0,T]}$. Denote by $\cP(\Omega)$ the set of probability measures on $(\Omega,\cF)$. For $\bP\in\cP(\Omega)$, to which we refer as a \textit{path measure}, we say that $\set{(X_t,\Lambda_t)}_{[0,T]}(\omega)$ is the \textit{canonical process} under $\bP$ if $\omega\in\Omega$ is sampled under $\bP$.  Finally, for $0\leq t \leq T$, we write the distribution of $(X_t,\Lambda_t)$ under $\bP$ as $\bP_t$, and call it the \textit{marginal distribution} at time $t$; similarly, the joint distribution of $(X_0,\Lambda_0)$ and $(X_T,\Lambda_T)$ under $\bP$ is denoted by $\bP_{0T}$.\\

We consider a path measure $\bR\in\cP(\Omega)$, called the \textit{reference measure}, such that the canonical process $\set{(X_t,\Lambda_t)}_{[0,T]}$ under $\bR$ is a \textit{regime-switching diffusion} in the sense we will describe below. 

First, the regime component $\set{\Lambda_t}_{[0,T]}$ is a discrete jump process on the state space $\set{\a,\d}$.
We assume that all particles start in the active regime, meaning $\Lambda_0 = \a$, and that they are ``killed'' (i.e., switch to regime $\d$) at a certain rate depending on their position and the current time. More specifically, for $t\in[0,T)$,
\begin{equation}\label{eqn: killing mechanism}
    \lim_{\eps \downarrow 0}\frac{1}{\eps}\bR(\Lambda_{t+\eps}= \d | \Lambda_t = \a, X_t=x) = V(t,x),
\end{equation}
where $V:[0,T]\times\R^d\to\R_+$ is called the  \textit{killing rate} under $\bR$.
Moreover, $$\bR(\Lambda_{t'}=\a | \Lambda_t=\d)=0\;\textup{ for all }0\leq t\leq t'\leq T,$$meaning that dead particles are not allowed to ``revive'' and switch back to the active regime. Define $\kt:=\inf\set{t\geq 0:\Lambda_t=\d}$ to be the \textit{killing time}. We observe that, as a random variable on $\Omega$, $\kt$ is a stopping time since $\{\kt>t\}=\{\Lambda_t=\a\}$ for every $t\in[0,T]$.

Next, while the particle is in the active regime, 
the spatial component $\set{X_t}_{[0,T]}$ under $\bR$ is governed by a stochastic differential equation (SDE) as
\begin{equation}\label{SDE for Xt}
 \begin{aligned}   X_t&=X_0+ \int_0^t b(s,X_{s})ds +\int_0^t\sigma(s,X_{s})dB_s \;\textup{ for }0\leq t<\kt\,\wedge T,
\end{aligned}
\end{equation}
where $\set{B_s}_{s\geq 0}$ is standard $d$-dimensional Brownian motion and $\kt$ is the killing time defined above. We discuss conditions on the coefficients of the SDE later. 

In addition, if the regime switches from $\a$ to $\d$ at time $\kt$, then the particle jumps in the spatial coordinate from $X_{\kt-}$ to a new position $\psi(\kt,X_{\kt-})$. 
In the dead regime, the drift and diffusion coefficients are set to $0$, and therefore the particle is frozen at $\psi$.   Below is the visualization of a sample path killed at time $\kt$:
\newpage

\begin{figure}[h!]
\centering
\begin{tikzpicture}[x=1cm,y=1cm,z=1cm,tdplot_main_coords,
    scale=1.1, every node/.style={font=\small}]

% Draw two parallel planes 

% Plane 1 at z=0, drawn in xy plane
\begin{scope}[canvas is xy plane at z=0]
    \draw[thick, gray] (-0.5, -0.5) -- (4.5, -0.5) -- (4.5, 4.5) -- (-0.5, 4.5) -- cycle;
    \node[below right] at (4.5, 4.5) {Regime $\a$ (active particle)};
\end{scope}

% Plane 2 at z=2, drawn in xy plane
\begin{scope}[canvas is xy plane at z=2]
    \draw[thick, gray] (-0.5, -0.5) -- (4.5, -0.5) -- (4.5, 4.5) -- (-0.5, 4.5) -- cycle;
    \node[above right] at (4.5, 4.5) {Regime $\d$ (killed particle)};
\end{scope}

% Continuous curve in Plane 1
\draw[blue, thick, domain=0:4, samples=20, smooth] plot[variable=\x] ({\x}, {0.3*sin(\x r)}, 0);

% Jump (dashed line) to another point in Plane 2
\draw[red, dashed, thick] (4, {0.3*sin(4 r)}, 0) -- (1.5, 2.2, 2);

% Mark start point
\fill[blue] (0, 0, 0) circle (2pt) node[above right] {$X_0$};

% Mark killing point
\fill[black] (4, {0.3*sin(4 r)}, 0) circle (2pt) node[below left] {$X_{\kt-}$};

% Mark the ending point in killed regime
\fill[red] (1.5, 2.2, 2) circle (2pt) node[above right] {End point $\psi(\kt,X_{\kt-})$};

\end{tikzpicture}
\caption{Sample path killed at time $\kt$.}
\end{figure}
As a result, $\set{(X_t,\Lambda_t)}_{[0,T]}$ can be viewed a \textit{diffusion with killing}, where the killing action is interpreted as a regime switch $\a \to \d$. The measure $\bR_T(\cdot,\a)$, which is the distribution of particles that survive up to time $T$, is a sub-probability measure.\\

Given the dynamics of our SDE, we can choose an appropriate trajectory space $\Omega$.
\begin{dfn}\label{dfn:omega}
    Define $\Omega=\Omega(\psi)$ to be the collection of c\`adl\`ag functions $\omega=(\omega^X,\omega^\Lambda):[0,T]\to\R^d\times\rSet$ such that  
\begin{itemize}
    \item either, $\omega^\Lambda\equiv\a$ on $[0,T]$, in which case $\omega^X$ is continuous on $[0,T]$. 
    \item or, there exists a time $\kt=\kt(\omega)\in[0,T]$ such that $\omega^\Lambda\equiv\a$ on $[0,\kt)$ and $\omega^\Lambda\equiv\d$ on $[\kt,T]$, in which case $\omega^X$ is continuous on $[0,\kt)$ and $\omega^X\equiv\psi(\kt,\omega^X(\kt-))$ on $[\kt,T]$.
\end{itemize}
\end{dfn} 
We already know the time $\kt$ in the second scenario above as the killing time. We further identify the first scenario with the case ``$\kt>T$'', i.e.,  killing only occurs after time $T$. \\

In this paper, we study the following SBP.
\begin{dfn}[Unbalanced Schr\"odinger Bridge Problem]
    Given a reference measure $\bR\in\cP(\Omega,\cF)$ and two probability measures on $\R^d \times \set{\a,\d}$
    \[\rho_0:=\bigg(\rho_0(\cdot,\a),0\bigg) \text{ and }\rho_T:=\bigg(\rho_T(\cdot,\a),\rho_T(\cdot,\d)\bigg),
    \]
    the unbalanced Schr\"odinger bridge problem aims to find the measure $\widehat\bP\in\cP(\Omega)$ such that
\begin{equation}\label{dfn: SBP-dynamic}
    \widehat{\bP}=\arg\min\set{\KL{\bP}{\bR} \text{ such that } \bP_0=\rho_0,\bP_T=\rho_T }.
\end{equation}
\end{dfn}
Here, $\KL{\cdot}{\cdot}$ denotes the KL divergence (relative entropy), defined as $\KL{\bQ}{\bP}:=  \Esub{\bQ}{\log\pran{\frac{d\bQ}{d\bP}}}$ if $\bQ\ll \bP$, and $+\infty$ otherwise. The reference measure $\bR$ is such that the canonical process under $\bR$ is a diffusion with killing as described above. We say that a path measure $\bP$ is \textit{admissible} for this SBP if $\bP_0=\rho_0$ and $\bP_T=\rho_T$.
The solution $\widehat\bP$, if it exists, is called the \textit{Schr\"odinger bridge}. We will also call it the \textit{unbalanced Schr\"odinger bridge} because $\rho_0(\cdot,\a)$ is a probability measure, but $\rho_T(\cdot,\a)$ is a sub-probability measure. Since $\rho_0$ is only defined in the active regime, we write $\rho_0(\cdot,\a)$ and $\rho_0(\cdot)$ interchangeably.
\subsection{Background and Motivation}\label{Subsection: Scenarios description}
In the classic SBP, a large number of particles is observed at times $t=0$ and $t=T$ to have spatial distributions $\rho_0$ and $\rho_T$ respectively. One must then find the ``most probable way'' that the particles have evolved between the two observations, which is understood as finding the path measure $\widehat\bP$ that minimizes the KL divergence to the reference measure (prior belief) $\bR$. In particular, the SBP for diffusion processes is a long-celebrated problem dating back to E. Schr\"odinger's papers \cite{schrodinger1931uber,schrodinger1932theorie} in the 1930s and it has generated a rich literature; a selection of works include \cite{beurling1960automorphism,jamison1974reciprocal,jamison1975markov,follmer1988random,follmer1997entropy,daipra1991stochastic,pra1990markov,chen2016entropic}.
In the unbalanced SBP, the number of particles observed at time $T$ is expected to be less than that at time $0$. Therefore, one must reconstruct the ``most likely paths'' while accounting for the disappearance, or the killing, of the missing particles. The original work \cite{chen2022most} on uSBP treated this problem by considering an augmentation of the state space to include an absorbing ``cemetery state'' $\c$. The killing of a particle was interpreted as a jump to $\c$, and the problem was recast as a classic SBP over c\`adl\`ag paths in $\R^d \cup \set{\c}$. In this paper, we instead adopt a \textit{regime-switching} approach to this problem, following the example in \cite{zlotchevski2025schrodinger}. 
The advantage of our approach is two-fold: first, we can directly apply the general theory of the SBP for regime-switching (jump) diffusions developed in \cite{zlotchevski2025schrodinger} to study the uSBP, and second, we have additional flexibility in the use of the killing time $\kt$ and the spatial coordinate $X_{\kt-}$ for a killed particle, allowing us to model a variety of real-life situations with different types of observational data.  We elaborate more on the latter point below.
 
Although, as in the classic SBP framework, we are only allowed to make two observations on the path: first at time $0$, and then at time $T$, in our regime-switching setup, we are able to incorporate more information about the path than just what the endpoints can tell seemingly. This is due to the fact that any killed particle is frozen in ``time and space''. For example, if $\psi(t,x)=x$, then the spatial component $X_T$ not only gives the position at time $T$ of a particle that \textit{survived}, but also records the killing location of a dead particle. As a result, the terminal constraint $\rho_T(\cdot,\d)$ exactly prescribes the distribution of killing locations. 

Let us also highlight the pair of papers by Eldesoukey \textit{et al.} \cite{eldesoukey2024excursion,eldesoukey2025inferring} that treat the cases when certain target distributions are imposed on the killed particles. In contrast, the uSBPs we treat in this paper also include target distributions on the  particles that survived. We will draw connections to the above mentioned uSBP works in \cref{Section: connection to other paper}.

\subsection{The Four Scenarios}\label{subsection:four scenarios} 
In this work, we focus on four uSBP models that are highly relevant in applications, and we will examine various aspects of the Schr\"odinger bridge obtained for these models on a case-by-case basis.  

To begin, in all of our uSBP models, we assume that the observer knows the initial distribution $\rho_0(\cdot,\a)$, as well as the distribution of particles that survived up to time $T$, which is given by $\rho_T(\cdot,\a)$.
As for the killed particles, we investigate four different choices of $\psi$, to which we refer as \textit{Scenarios} of the uSBP. These choices reflect the information available to the observer regarding the killed particles:
\begin{enumerate}
    \item The observer knows the joint distribution of killing locations and killing times.
    \item The observer knows the distribution of killing times only.
    \item The observer knows the distribution of killing locations only.
    \item The observer knows the total mass of killed particles, with no knowledge of its distribution.
\end{enumerate}
As alluded to above, the types of constraints on the distribution of killed particles concerned in Scenarios (1), (2) and (3) have appeared in the uSBP literature \cite{eldesoukey2024excursion,eldesoukey2025inferring}, but these works did not impose a target distribution $\rho_T(\cdot,\a)$ for the surviving particles. The regime-switching approach will allow us to solve these uSBP scenarios in full generality. Scenario (4) is the original uSBP \cite{chen2022most}; we will fully recover known results and offer additional insight as well. Let us now explain our choice of $\psi$, as well as the dead-regime target distribution $\rho_T(\cdot,\d)$ for each of these Scenarios. We will add the superscript ``$\,^{(k)}$'', for $k\in\set{1,2,3,4}$, to indicate the Scenario to which these subjects correspond.

In Scenario (1), the observer is ``omniscient'' regarding the killed particles; that is, the observation provides the full joint distribution of killing locations and killing times. As we wish to record and match this $(d+1)$-dimensional information, we need to augment the dimension of the process. Let $\psi^{(1)}(t,x)=\db{x,t}$ for $ x\in \R^{d}$ and $t\in[0,T]$, and consider the augmentation of $\set{X_t}_{[0,T]}$ to a $\R^{d}\times[0,T]$-valued process $\set{\db{X_t,\t_t}}_{[0,T]}$ with $\t_t:=\kt\one_{[\kt,T]}(t)$ where, again, $\kt$ is the killing time of the process. This added last coordinate $\t_t$ will only be used in the dead regime to record the killing time. The augmented process is still within the setting of a diffusion with killing. Since $\t_t\equiv0$ for $t$ up to the killing time, we will omit the augmented coordinate and continue writing $(X_t,\Lambda_t)=(x,\a)$ in the active regime. In the dead regime, we will write the value taken by the augmented process as $(\db{X_t,\t_t},\Lambda_t)=(\db{x,\t},\d)$. The constraint $\rho_T^{(1)}(\db{x,\t},\d)$ is a sub-probability density on $\R^d\times[0,T]$ that encodes the observed joint distribution of killing locations and killing times, satisfying that  
\[
    \int_{\R^d}\rho_T(x,\a)dx+\int_0^T\int_{\R^d}\rho_T^\1(\db{x,\t},\d)dxd\t=1.
\]

In Scenario (2), we suppose that the observer keeps track of how many particles are active at any given time; in other words, the observation contains the distribution of killing times. To model this scenario, let $\c \in \R^d$ be an arbitrary fixed ``cemetery point'', and set $\psi^{(2)}(t,x)=\db{\c,t}$. Here, although the dimension augmentation is not required ($\c$ could be taken as a point in $\R^{d-1}$), the parallelism of the dimension size will simplify notation for the comparison in \cref{Subsection: Comparison}. A killed particle's terminal state will take the form $(\db{X_T,\t_T},\Lambda_T)=(\db{\c,\t},\d)$, and thus it will contain the killing time in the $\t$ coordinate, but not the killing location. Therefore, we can model the desired uSBP using the regime-switching approach by encoding the observer's knowledge of the killed particles into a target sub-probability density $\rho_T^{(2)}(\db{\c,\t},\d)$ on $[0,T]$ such that 
\[
    \int_{\R^d}\rho_T(x,\a)dx+\int_0^T\rho_T^\2(\db{\c,\t},\d)d\t=1.
\]

In Scenario (3), we consider the ``space only'' counterpart of Scenario (2), in the sense that the observer has knowledge of the distribution of killing locations, but not killing times. This scenario was studied for the jump-diffusion uSBP in \cite{zlotchevski2025schrodinger}. To model this problem, we let $\psi^{(3)}(t,x)=x$ and we encode the distribution of killing locations into the target $\rho_T^{(3)}(x,\d)$ as a sub-probability density on $\R^d$ with  
\[
    \int_{\R^d}\rho_T(x,\a)dx+\int_{\R^d}\rho_T^\3(x,\d)dx=1.
\]

Finally, in Scenario (4), we suppose that the observer has no knowledge about the distribution of killed particles and only knows their total amount. This is the set-up of the original uSBP \cite{chen2022most}. To model this scenario, take $\psi^{(4)}(t,x)=\c$, where $\c\in\R^d$ is a fixed cemetery point (as in Scenario (2)), and let $\rho_T^{(4)}(\c,\d)$ be a Dirac delta distribution representing the totality of ``missing'' particles; that is  
\[
    \int_{\R^d}\rho_T(x,\a)dx+\rho_T^\4(\c,\d)=1.
\]

Meanwhile, if $\bR^{(k)}(\cdot,\d)$, for $k\in\set{1,2,3,4}$, denotes the terminal distribution of the reference measure in the dead regime in each Scenario, then we have the following relations:
\begin{figure}[h!]
\centering
\begin{tikzpicture}[
    node distance=0.8cm and 0.3cm,
    box/.style={rectangle, rounded corners, draw, minimum height=0.8cm, minimum width=4cm, align=center},
    arrow/.style={-Stealth, thick}
]
% Nodes
\node[box] (C1) {Scenario (1): $\bR_T^\1(\db{x,\t},\d)$};
\node[box, below left=of C1] (C2) {Scenario (2): $\bR_T^\2(\db{\c,\t},\d)$};
\node[box, below right=of C1] (C3) {Scenario (3): $\bR_T^\3(x,\d)$};
\node[box, below right=of C2] (C4) {Scenario (4): $\bR_T^\4(\c,\d)$};
% Arrows
\draw[arrow] (C1) -- (C2) node[midway, above left] {Integrate in $x$ over $\R^d$};
\draw[arrow] (C1) -- (C3) node[midway, above right] {Integrate in $\t$ over $[0,T]$};
\draw[arrow] (C2) -- (C4) node[midway, below left] {Integrate in $\t$ over $[0,T]$};
\draw[arrow] (C3) -- (C4) node[midway, below right]{Integrate in $x$ over $\R^d$};
\end{tikzpicture}
\caption{Relations among the four Scenarios.}
\end{figure}
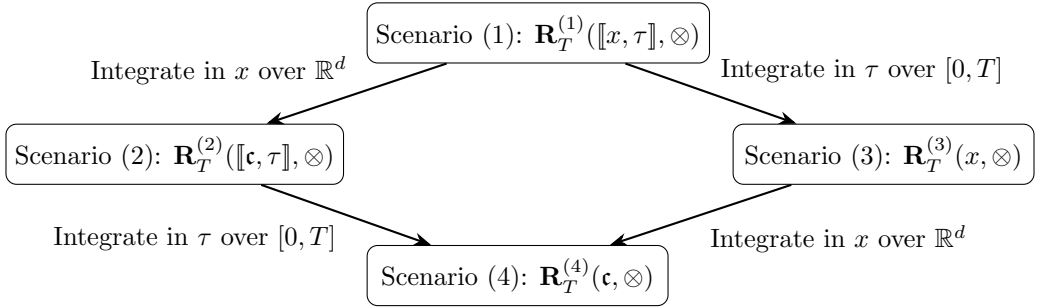

Assuming the active-regime target distributions $\rho_0$ and $\rho_T(\cdot,\a)$ are the same across all the Scenarios, and the dead-regime target distributions $\rho_T^{(k)}(\cdot,\d)$ for $k\in\set{1,2,3,4}$ satisfy the relations in the diagram above, a natural question is to compare the unbalanced Schr\"odinger bridges $\widehat\bP^{(k)}$ for $k\in\set{1,2,3,4}$ obtained in these Scenarios. We will provide a detailed comparison analysis in \cref{Subsection: Comparison}.
\subsection{Assumptions and Results}
In this subsection, we present the assumptions under which our work is conducted and we outline our main results.

\subsubsection*{Our Assumptions}
We impose a set of standard assumptions on the coefficients involved in the killing mechanism \eqref{eqn: killing mechanism} and the SDE \eqref{SDE for Xt}, as well as proper conditions on the target $(\rho_0,\rho_T)$, to ensure that the uSBP admits a solution that takes a desired form. This will allow us to apply the general theory of the SBP for regime-switching (jump) diffusions developed in \cite{zlotchevski2025schrodinger}.

First, we require the coefficients $b,\sigma,V$ that appear in \eqref{eqn: killing mechanism} and \eqref{SDE for Xt}, as well as the terminal position function $\psi$, to satisfy the following assumptions throughout this work:
\paragraph{\textbf{Assumption (C)}}\customlabel{Assumption C}{\textbf{(C)}}
\begin{enumerate}
    \item 
    $b=(b_m)_{1\leq m\leq d}:[0,T]\times\R^d \to \R^d$ is such that $b_m,\nabla b_m\in (C_b\cap\cH^\alpha_\text{u})$\footnote[2]{The definitions of ``$\cH^\alpha_\text{u}$'' and other function classes are given in \cref{subsection:notation}.}$([0,T]\times\R^d)$ (for some $\alpha>0$) for every $1\leq m\leq d$.
    \item 
    $\sigma=(\sigma_{mn})_{1\leq m,n\leq  d} : [0,T]\times\R^d \to\R^{d\times d}$ is such that, if $\sigma\sigma^\top=:a=(a_{mn})_{1\leq m,n\leq d}$, then $a_{mn}\in (C_b \cap C_\text{u})([0,T]\times\R^d)$ and $\nabla a_{mn},\nabla^2a_{mn}\in (C_b\cap\cH^\alpha_\text{u})([0,T]\times\R^d)$ for every $1\leq m,n\leq d$. Moreover, $a$ is uniformly elliptic: there exists $c>0$ such that
    \[ \sum_{m,n}a_{mn}(t,x)y_m y_n \geq c |y|^2 \text{ for all }(t,x)\in[0,T]\times\R^d \text{ and }y\in\R^d.
    \]
    \item $V:[0,T]\times\R^d\to\R_+$ is in $ (C_b\cap\cH^\alpha_\text{u})([0,T]\times\R^d)$ and not identically 0.
    \item $\psi:[0,T]\times\R^d\to\R^d$ is measurable and locally bounded.
\end{enumerate}
One purpose of Assumption \ref{Assumption C} is to guarantee the well-definedness of our model. Namely, under Assumption \ref{Assumption C}, we can always find a strong Markov reference measure $\bR\in\cP(\Omega)$, with any pre-set initial distribution, such that the canonical process $\set{(X_t,\Lambda_t)}_{[0,T]}$ under $\bR$ satisfy both the killing mechanism \eqref{eqn: killing mechanism} and the SDE \eqref{SDE for Xt}. Second, backed by the classical regularity theory for parabolic partial differential equations (PDEs), Assumption \ref{Assumption C} is also sufficient for the existence of a positive transition density function under $\bR$ within the active regime. Moreover, this transition density function is known to satisfy both the forward and the backward PDEs corresponding to the uSBP. All the above provides the foundation for our study of the dynamics of the Schr\"odinger bridge $\widehat\bP$. We will be more specific on the implications of Assumption \ref{Assumption C} in \cref{subsection:impl. of assump.}. \\

Next, to ensure that the uSBP (\ref{dfn: SBP-dynamic})  admits a unique solution, we impose:
\vspace{0.25cm}
\paragraph{\textbf{Assumption (A)}}\customlabel{Assumption A}{\textbf{(A)}} The target distributions $(\rho_0,\rho_T)$ satisfy $\KL{\rho_0\otimes\rho_T}{\bR_{0T}}<\infty$.
\vspace{0.2cm}\\ 
This assumption can be relaxed to the existence of some $\bP\in\cP(\Omega)$ such that $\bP_0=\rho_0$, $\bP_T=\rho_T$, and $\KL{\bP_{0T}}{\bR_{0T}}<\infty$. However, in practice, the statement in Assumption \ref{Assumption A} is easier to verify. 

We note that by properties of the KL divergence, Assumption \ref{Assumption A} implies that $$\KL{\rho_0}{\bR_0}<\infty\;\text{ and }\KL{\rho_T}{\bR_T}<\infty.$$
Thus, in particular, it guarantees that $\rho_0\ll\bR_0$ and $\rho_T\ll\bR_T$.

It is known (see, e.g. \cite{leonard2014survey}) that, if Assumptions \textup{\ref{Assumption A}} and \textup{\ref{Assumption C}} are satisfied, then the SBP \eqref{dfn: SBP-dynamic} admits a unique solution $\widehat\bP$ such that $\KL{\widehat\bP}{\bR}<\infty$, and further this optimizer takes the form 
\begin{equation}\label{eqn:bP^ as prod of fg}
    \widehat\bP=\f(X_0,\Lambda_0)\g(X_T,\Lambda_T)\bR, 
\end{equation} 
where $\f,\g :\R^d\times\set{\a,\d} \to \R_+$ are non-negative measurable functions that solve\footnote[3]{In fact, $\f,\g$ are the unique solutions to this system up to a scaling constant.} the \textit{static Schr\"odinger system}:
    \begin{equation}\label{Equation: fg Schrodinger System}
        \begin{dcases}
        \vspace{0.2cm} \f(x,\a)\Esub{\bR}{\g(X_T,\Lambda_T)|(X_0,\Lambda_0)=(x,\a)} = \frac{d\rho_0}{d\bR_0}(x,\a)& \text{ for $\bR_0$-a.e. $(x,\a)$},\\
        \g(x,\a)\Esub{\bR}{\f(X_0,\Lambda_0)|(X_T,\Lambda_T)=(x,\a)} = \frac{d\rho_T}{d\bR_T}(x,\a)& \text{ for $\bR_T$-a.e. $(x,\a)$},\\
        \g(y,\d)\Esub{\bR}{\f(X_0,\Lambda_0)|(X_T,\Lambda_T)=(y,\d)} = \frac{d\rho_T}{d\bR_T}(y,\d)& \text{ for $\bR_T$-a.e. $(y,\d)$}.
        \end{dcases}
    \end{equation} 
We note that, in the setup of our uSBP, $\bR_T(\cdot,\d)$ will be supported on the range of the function $\psi$, and hence the last equation of \eqref{Equation: fg Schrodinger System} can vary greatly with different choices of $\psi$; in \cref{Subsection: static system}, we will discuss this aspect in the four Scenarios.\\ 

Finally, in order to establish the ``dynamic'' analog of the system \eqref{Equation: fg Schrodinger System} and to take full advantage of analytic and stochastic tools in exploring the properties of $\widehat\bP$, we impose the following assumptions on $\f,\g$. 
\paragraph{\textbf{Assumption (B)}}\customlabel{Assumption B}{\textbf{(B)}}
\begin{enumerate}%[label=\textbf{(B)}]
    \item The function $\f$ satisfies the integrability condition $\int_{\R^d}\f(x,\a)\bR_0(dx,\a)<\infty$. 
    \item The function $\g$ is bounded on the support of $\bR_T$.
\end{enumerate}
Under the collective effect of Assumptions \ref{Assumption A}, \ref{Assumption B}, \ref{Assumption C}, following the general theory developed in \cite{zlotchevski2025schrodinger}, we will be able to study the Schr\"odinger bridge $\widehat\bP$ using stochastic calculus and analytic tools. Let us now give an overview of our results.

\subsubsection*{Main Results}
Our work is composed of two main parts. In the first part, we derive various fundamental properties of the unbalanced Schr\"odinger bridge $\widehat\bP$ for diffusion with killing by adapting the general theory from \cite{zlotchevski2025schrodinger} to our uSBP setting. To begin, in \cref{section: general results}, we discuss the aspects of $\widehat\bP$ that apply to all choices of $\psi$, even beyond the four Scenarios outlined in \cref{Subsection: Scenarios description}. Notably, we show that $\widehat\bP$ is the path measure of another diffusion with killing. This part of the result is centered around the pair of functions $\varphi$ and $\widehat\varphi$ defined in \eqref{Definition: varphi-usbp} and \eqref{dfn: phi-hat active regime} which, as we discuss in \cref{section: case-by-case results}, are solutions to the \textit{dynamic Schr\"odinger system} corresponding to the uSBP. Based on $\varphi$ and $\widehat\varphi$, we present (in \textbf{\cref{thm: P-hat SDE and generator}}) the explicit formulation of the SDE associated with $\widehat\bP$ in the active regime, along with the new killing rate, and we further obtain (in \textbf{\cref{thm: transition and active marginals}}) the transition density function under $\widehat\bP$. We also derive the \textit{stochastic control} formulation of the uSBP and its solution (in \textbf{\cref{def:stochastic control}} and \textbf{\cref{thm:stochastic control}}). These are ``universal'' properties of $\widehat\bP$: although they depend on $\psi$ (through $\varphi$), they do not vary from one choice of $\psi$ to another. 

In \cref{section: case-by-case results}, we study facets of the uSBP that must be analyzed on a case-by-case basis depending on the specific choice of $\psi$; although the expressions yielded in this section still depend explicitly on $\psi$, they vary from Scenario to Scenario. The results obtained in Scenarios (3) and (4) generalize those of the previous uSBP works \cite{zlotchevski2025schrodinger} and \cite{chen2022most} respectively. In our Scenario (3), we consider a time and space dependent killing rate $V(t,x)$ in contrast with the space-homogeneous killing rate $V(t)$ in \cite{zlotchevski2025schrodinger}. In our Scenario (4), we obtain additional insight on the uSBP compared to \cite{chen2022most}, such as the operator approach to the dynamic Schr\"odinger system and Kolmogorov forward equation. \\

In the second part, we analyze connections among our different Scenarios as well as relations to other uSBP works. In \cref{Subsection: Comparison}, we compare uSBP Scenarios in which $(\bR,\rho_0,\rho_T)$ are the ``same'', in the sense that $b,\sigma, \bR_0, V, \rho_0, \rho_T(\cdot,\a)$ are identical, and $\bR_T(\cdot,\d),\rho_T(\cdot,\d)$ satisfy the diagram at the end of \cref{subsection:four scenarios} across the Scenarios. We demonstrate that the Scenario with a ``stronger'' constraint yields a Schr\"odinger bridge with greater KL divergence to the reference measure; for example, Scenario (1) has a stronger constraint compared to Scenario (2), as $\bR_T^{(1)} (\cdot,\d)$ and $\rho_T^{(1)} (\cdot,\d)$ respectively have $\bR_T^{(2)}(\cdot,\d)$ and $\rho_T^{(2)}(\cdot,\d)$ as the marginal, and in this case we have  $\KL{\widehat\bP^\1}{\bR^\1}\geq \KL{\widehat\bP^\2}{\bR^\2}$. We also determine (in \textbf{\cref{Thm: KL divergence comparison}}) the conditions under which the above equality is achieved. In particular, we show that the equality holds for a unique choice of  $\rho_T(\cdot,\d)$. These conditions take a simple form in the case where $\bR_0$ is a Dirac delta distribution. 

In \cref{Section: connection to other paper}, we examine the connections of our uSBP model to the uSBP studied in \cite{eldesoukey2024excursion}. The latter is a variant of our Scenario (1) where the constraint $\widehat\bP_T(\cdot,\d)=\rho_T(\cdot,\d)$ is removed. We demonstrate (in \textbf{\cref{thm: comparison to other paper}}) that the unbalanced Schr\"odinger bridge in this case inherits properties from our Scenario (1) and that the Schr\"odinger system solution satisfies $\g(\cdot,\a)\equiv 1$. This recovers the uSBP results obtained in \cite{eldesoukey2024excursion}, further illustrating the versatility of the regime-switching approach adopted in this work.

\subsection{Notation and Terminology}\label{subsection:notation}
We denote by $C_b([0,T]\times\R^d)$ the class of continuous and bounded functions on $[0,T]\times\R^d$, by $C_\text{u}([0,T]\times\R^d)$ the class of functions that are uniformly continuous (in both variables) on $[0,T]\times\R^d$. For $\alpha>0$, we write $\cH^\alpha_\text{u}([0,T]\times\R^d)$ for the class of functions that are H\"older-$\alpha$ continuous in the spatial variable uniformly on $[0,T]\times\R^d$; i.e., $g\in\cH^\alpha_\text{u}([0,T]\times\R^d)$ if there exists a constant $C=C_{\alpha,g}>0$ such that \[
|g(t,x)-g(t,y)|\leq C|x-y|^\alpha\textup{ for all }x,y\in\R^d \text{ and }t\in[0,T].
\] 

For a function $f:(t,x)\in[0,T]\times\R^d\mapsto f(t,x)\in\R$, the notations $\nabla f,\nabla^2 f$ refer to the gradient and the Hessian respectively of $f$ with respect to the spatial variable $x$ only. We say that such a function $f$ is \textit{of class $C^{1,2}$} if $f\in C([0,T]\times\R^d)$, and as a function on $(0,T)\times\R^d$, $f$ is once differentiable in the temporal variable and twice differentiable in the spatial variable with $\frac{\partial}{\partial t}f,\nabla f,\nabla^2 f\in C((0,T)\times\R^d)$.
If $f$ is of class $C^{1,2}$ and in addition, $f$ is supported on a compact subset of $(0,T)\times\R^d$, then  $f$ is said to be \textit{of class $C^{1,2}_c$}. We call $f$ \textit{of class $C_b^{1,2}$}, if $f$ is of class $C^{1,2}$, and as a function on $(0,T)\times\R^d$, $\frac{\partial}{\partial t}f, \nabla f,\nabla^2 f$ are bounded in the spatial variable, with the bounds being uniform in the temporal variable over any compact subset of $(0,T)$. 

For a function $f:[0,T]\times\R^d\times\rSet\to\R$, we say that $f$ is of class $C^{1,2}$ (resp. $C^{1,2}_c$, $C^{1,2}_b$), if, as a function on $[0,T]\times\R^d$, $f(\cdot,\a)$ is of class $C^{1,2}$ (resp. $C^{1,2}_c$, $C^{1,2}_b$) (and we do not need these properties for $f(\cdot,\d)$).\\

The dimension $d$ of the spatial component is used loosely: in Scenarios (1) and (2), the true dimension is in fact $d+1$. The state space of paths, domains of functions, etc., should be understood as $\R^d$ or $\R^{d+1}$ depending on the Scenario, with $X_t$ replaced by $\db{X_t,\t_t}$ if needed. However, since only the active regime is concerned, the transition density $q(t,x,s,y)$ as in \eqref{eqn: Definition of q} is always defined for $x,y\in\R^d$, with the trivial augmented coordinate omitted. 

For a function $f$ that appears across multiple uSBP Scenarios,
we will write $f(t,x,\a)$ in the active regime, which is to be understood as $x\in\R^d$ in Scenarios (3), (4), and as $x\in\R^{d+1}$ in Scenarios (1), (2), with the last coordinate being trivial. If needed, we may also write $\aug{x}$ for the trivial augmentation of $x\in\R^d$ to a $(d+1)$-dimensional vector.

In the dead regime, we use the notation $f(t,y,\d)$ for $y\in\textup{Range}(\psi)$ or $y\in\R^d$ or $y\in\R^{d+1}$; if we need to address the augmented coordinate separately, we will write $f(t,\db{x,\t},\d)$, where $x\in\R^d$ and $\t \in [0,T]$.

We use the superscript ``$^{(k)}$'', for $k\in\set{1,2,3,4}$, to indicate that a certain set, measure or function is associated with Scenario $(k)$.
The expression $f(t,\psi(\t,x),\d)$, where $t,\t \in[0,T]$ and $x\in\R^d$, should be understood as, respectively in Scenario (1), (2), (3), (4):
\begin{enumerate}
    \item $f^\1(t,\db{x,\t},\d)$, for $(t,x,\t)\in[0,T]\times \R^d \times[0,T]$,
    \item $f^\2(t,\db{\c,\t},\d)$, for $(t,\t)\in[0,T]\times[0,T]$,
    \item $f^\3(t,x,\d)$, for $(t,x)\in[0,T]\times \R^d$,
    \item $f^\4(t,\c,\d)$, for $t\in[0,T]$.
\end{enumerate}

\section{Unbalanced Schr\"odinger Bridge - General Results}\label{section: general results}
In this section, we will provide an overview of the properties of the unbalanced Schr\"odinger bridge $\widehat\bP$ solving \eqref{dfn: SBP-dynamic}. Throughout this section, we assume that Assumptions \ref{Assumption A}, \ref{Assumption B}, and \ref{Assumption  C} are in force. 

We begin with a brief review of the implications of our assumptions which will be relevant to later discussions. Then we study properties of the uSBP that are common across all choices of the function $\psi$ satisfying Assumption \ref{Assumption C}. The expressions in this section depend explicitly on $\psi$ and they hold true beyond the four Scenarios described in \cref{Subsection: Scenarios description}. 

\subsection{Implication of the Assumptions}\label{subsection:impl. of assump.}
In this subsection we will outline some useful properties and facts implied by our assumptions. 

First, under Assumption \ref{Assumption C}, given any probability distribution $\mu_0$ on $\R^d$, there exists a strong Markov path measure $\bR\in\cP(\Omega)$ with $\bR_0=\mu_0\times\delta_{\a}$ such that the canonical process $\set{(X_t,\Lambda_t)}_{[0,T]}$ under  $\bR$ satisfies  the SDE \eqref{SDE for Xt} and the killing mechanism \eqref{eqn: killing mechanism}. Further, it is a well known result in the regularity theory of parabolic PDEs (see, e.g., \cite{friedman2008partial,friedmanstochastic}) that such a path measure $\bR$, when restricted within the active regime, admits a \textit{transition density function} in the spatial component in the sense that there exists an everywhere positive function $q(t,x,s,y)$ for $x,y\in\R^d$ and $0\leq t < s \leq T$ such that, for every Borel $B\subseteq \R^d$,
\begin{equation}\label{eqn: Definition of q}
    \bR(X_s \in B, \Lambda_s =\a | (X_t, \Lambda_t) =(x,\a)) = \int_B q(t,x,s,y)dy.
\end{equation}
Based on $q(t,x,s,y)$, we can also derive the cross-regime transition density function as:
\begin{align*}
 \bR(X_{\kt-}\in B,\Lambda_s=\d |(X_t,&\Lambda_t) =(x,\a)) \\&= \bR(X_{\kt-}\in B, \kt\in(t,s]| (X_t,\Lambda_t) =(x,\a))\\
 &=\int_{B}\int^s_t V(r,y)q(t,x,r,y)drdy   
\end{align*}
where, again, $\kt$ denotes the killing time; this implies that the regime switch $\a\to\d$ has the transition density function 
\begin{equation}\label{eqn:cross-regime density}
  \tilde q (t,x,s,y):=V(s,y)q(t,x,s,y) 
\end{equation}
which is the probability that an active particle starting from location $x$ at time $t$ gets killed at location $y$ at a later time $s$.\\

Next, in view of \eqref{eqn: killing mechanism} and \eqref{SDE for Xt}, 
we define, for $f:[0,T]\times\R^d\times\rSet \to \R$ 
of class $C^{1,2}$ and $(t,x)\in[0,T]\times\R^d$, the operators 
\begin{equation*}%\label{eq:def of L0 usbp}
    \begin{aligned}
        L_0f(t,x)&:=b(t,x)\cdot\nabla f(t,x,\a)+\frac{1}{2}\sum_{m,n}a_{mn}(t,x)\frac{\partial^2f}{\partial x_m \partial x_n}(t,x,\a)
    \end{aligned}
\end{equation*}
and 
\begin{equation}\label{dfn: L}
Lf(t,x,i):=\begin{dcases}
   L_0f(t,x)+V(t,x)(f(t,\psi(t,x),\d)-f(t,x,\a)) & \text{if }i=\a,\\
   0 & \text{if }i=\d.
\end{dcases}
\end{equation}
We also refer to $L$ as the \textit{generator} of the diffusion with killing. The $L^2([0,T]\times\R^d)$-adjoint of $L_0$, denoted by $L_0^*$, is defined for $f$ of class $C^{1,2}$ as:
\begin{equation}\label{eq:def of L*0 usbp}
    \begin{aligned}
        L^*_0f(t,x)&:=-\nabla \cdot(b f)(t,x,\a)+\frac{1}{2}\sum_{m,n}\frac{\partial^2(a_{mn}f)}{\partial x_m \partial x_n}(t,x,\a).
    \end{aligned}
\end{equation}
Then, the transition density function $q$ found above satisfies the following \textit{backward} and \textit{forward} PDEs in terms of $L_0$ and $L_0^*$ respectively:
\begin{enumerate}
    \item for every $(s,y)\in(0,T]\times\R^d$, the function $(t,x)\in(0,s)\times\R^d\mapsto q(t,x,s,y)$ is of class $C^{1,2}_b$ 
    and satisfies 
\begin{equation*}%\label{eq: backward equation with killing}
    \frac{\partial}{\partial t}q(t,x,s,y)=(-L_0+V)q(\cdot,s,y) \,(t,x);
\end{equation*}
    \item for every $(t,x)\in[0,T)\times\R^d$, the function $(s,y)\in(t,T)\times\R^d\mapsto q(t,x,s,y)$ is of class $C^{1,2}_b$
    and satisfies 
\begin{equation*}%\label{eq: forward equation with killing}
    \frac{\partial}{\partial s}q(t,x,s,y)=(L_0^*-V)q(t,x,\cdot) \,(s,y).    
\end{equation*}
\end{enumerate}

Finally, we are ready to define the pair of functions $(\varphi,\widehat\varphi)$ that plays a central role in the theory of the SBP. The first function $\varphi:[0,T]\times\R^d\times\set{\a,\d}\to\R_+$ is given by
\begin{equation}\label{dfn: general varphi}
    \varphi(t,x,i):=\bE_{\bR}\brac{\g(X_T,\Lambda_T)|(X_t,\Lambda_t)=(x,i)}\textup{ for }(t,x,i)\in[0,T]\times\R^d\times\set{\a,\d}.
\end{equation}
In particular, $\varphi(T,\cdot)=\g$, and for $t\in[0,T)$, by the definition of $q$,
\begin{equation}\label{Definition: varphi-usbp}
\begin{dcases}
\varphi(t,x,\a):=\int_{\R^d}q(t,x,T,z)\g(z,\a)dz\\
\hspace{2cm}+\int_{\R^d}\int_t^T q(t,x,r,z)V(r,z)\g(\psi(r,z),\d)dr dz\;\;\textup{for }x\in\R^d,\\
\varphi(t,y,\d):=\g(y,\d)=\varphi(T,y,\d)\;\;\textup{for }y\in\textup{Range}(\psi)\textup{ (which is constant in $t$)}.
\end{dcases}
\end{equation} 

As a consequence of Assumptions  \ref{Assumption B} and \ref{Assumption  C}, the following holds. 
\begin{prop}\label{proposition : harmonic varphi}
    Let $L$ be the operator given by \eqref{dfn: L}. As a function on $[0,T]\times\R^d$, $\varphi(\cdot,\a)$ is of class $C^{1,2}$. In addition,
    \begin{enumerate}
        \item $\varphi(\cdot,\a)>0$ on $(0,T)\times\R^d$, and 
        \item $\varphi$ is \textit{harmonic} under $L$ in the sense that $(\frac{\partial}{\partial t}+L) \varphi=0$ on $(0,T)\times\R^d \times \rSet$.
    \end{enumerate}
\end{prop}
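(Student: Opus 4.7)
The plan is to work directly from the integral representation \eqref{Definition: varphi-usbp} of $\varphi(\cdot,\a)$, exploiting that the transition density $q$ is $C^{1,2}_b$ in its backward variables and satisfies the backward Kolmogorov equation $\partial_t q(\cdot,s,y)=(-L_0+V)q(\cdot,s,y)$ recalled in \cref{subsection:impl. of assump.}. For the $C^{1,2}$ smoothness of $\varphi(\cdot,\a)$, I would justify differentiation under the integral sign in both summands of \eqref{Definition: varphi-usbp}: Assumption~\ref{Assumption B}(2) gives boundedness of $\g$ on $\mathrm{supp}(\bR_T)$, while Assumption~\ref{Assumption C} delivers the standard parabolic-kernel estimates on $q$ and its $\partial_t$, $\nabla$, $\nabla^2$ derivatives in $(t,x)$, locally uniformly in the backward variables. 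Combined with the boundedness of $V$ from Assumption~\ref{Assumption C}(3), this legitimizes termwise differentiation under both the $dz$ and $dr\,dz$ integrals, yielding continuity of the required derivatives on $(0,T)\times\R^d$. For positivity, both integrands in \eqref{Definition: varphi-usbp} are non-negative since $q,V\geq 0$ and $\g\geq 0$. Because $\widehat\bP\in\cP(\Omega)$ satisfies $\widehat\bP_T=\rho_T\neq 0$, the function $\g$ cannot be $\bR_T$-a.e. zero; combined with $q>0$ pointwise and $V\not\equiv 0$ from Assumption~\ref{Assumption C}(3), at least one of the two integrals in \eqref{Definition: varphi-usbp} is strictly positive at every $(t,x)\in(0,T)\times\R^d$.

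To verify the harmonic equation in the active regime, I would split $\varphi(\cdot,\a)=A+B$ along the two summands of \eqref{Definition: varphi-usbp} and differentiate each in $t$ using the backward PDE. The summand $A$ directly yields $\partial_t A+L_0 A=V(t,x)A$. For $B$, Leibniz's rule contributes an analogous interior term $\partial_t B+L_0B=V(t,x)B$, together with a boundary correction at $r=t$ of the form $-\lim_{r\downarrow t}\int q(t,x,r,z)V(r,z)\g(\psi(r,z),\d)\,dz$, which identifies with $-V(t,x)\g(\psi(t,x),\d)=-V(t,x)\varphi(t,\psi(t,x),\d)$ via the delta-approximation property of $q$. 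Summing these contributions yields
\[
\partial_t\varphi(t,x,\a)+L_0\varphi(t,x,\a)+V(t,x)\bigl[\varphi(t,\psi(t,x),\d)-\varphi(t,x,\a)\bigr]=0,
\]
which is $(\partial_t+L)\varphi=0$ by the definition \eqref{dfn: L} of $L$. In the dead regime, $\varphi(t,y,\d)=\g(y,\d)$ is constant in $t$ and $L\varphi(t,y,\d)\equiv 0$ by \eqref{dfn: L}, so the equation holds trivially.

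The main obstacle is the rigorous identification of the Leibniz boundary term as $V(t,x)\g(\psi(t,x),\d)$: the delta-convergence $q(t,x,r,\cdot)\,dz\to\delta_x$ as $r\downarrow t$ has to pair correctly against $V(r,\cdot)\g(\psi(r,\cdot),\d)$, yet Assumption~\ref{Assumption C}(4) only requires $\psi$ to be measurable and locally bounded rather than continuous, and $\g$ is likewise only measurable. A cleaner workaround that sidesteps this subtle limit is to first establish, via the Markov property of $\bR$ and the tower property of conditional expectations, that $\{\varphi(t,X_t,\Lambda_t)\}_{t\in[0,T]}$ is a $\bR$-martingale, and then deduce the PDE by applying It\^o--Dynkin's formula to this martingale once the $C^{1,2}$ regularity of $\varphi(\cdot,\a)$ has been secured from the smoothness argument above.
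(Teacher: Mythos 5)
The paper itself does not supply a proof of this proposition: it is asserted as ``a consequence of Assumptions (B) and (C)'' and implicitly deferred to the regularity theory recalled in \cref{subsection:impl. of assump.} and to \cite{zlotchevski2025schrodinger}. So your proposal cannot be matched against an argument in the text; judged on its own, it reconstructs the intended route (split $\varphi(\cdot,\a)$ into the two summands of \eqref{Definition: varphi-usbp}, use the backward equation $\partial_t q=(-L_0+V)q$ on each, and collect the Leibniz boundary term at $r=t$ into the killing term of $L$), and you correctly identify where the real difficulty sits. Two issues, though. First, your ``cleaner workaround'' is partially circular: applying the It\^o--Dynkin formula to $\varphi(t,X_t,\Lambda_t)$ presupposes exactly the $C^{1,2}$ regularity of $\varphi(\cdot,\a)$ --- in particular the existence and continuity of $\partial_t B$ for the second summand $B(t,x)=\int_{\R^d}\int_t^T q(t,x,r,z)V(r,z)\g(\psi(r,z),\d)\,dr\,dz$ --- and that is precisely the point your differentiation-under-the-integral argument does not settle. $B$ is a volume potential with source $F=V\cdot(\g\circ\psi)$, and with $F$ merely bounded and measurable (all that Assumption \ref{Assumption C}(4) and the measurability of $\g$ give you) classical Schauder theory yields $W^{1,2}_{p,\mathrm{loc}}$ regularity, not $C^{1,2}$; the martingale argument then only delivers $(\frac{\partial}{\partial t}+L)\varphi=0$ in an a.e.\ or integrated sense. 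To get the proposition as literally stated one needs additional regularity of $\g\circ\psi$ (e.g.\ H\"older continuity, which does hold in the concrete Scenarios when $\g(\cdot,\d)$ is regular), or the conclusion must be read a.e.; your write-up should say which.

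Second, the positivity claim is slightly overstated. From ``$\g$ is not $\bR_T$-a.e.\ zero'' you cannot conclude that one of the two integrals is positive at \emph{every} $(t,x)\in(0,T)\times\R^d$: if $\g(\cdot,\a)\equiv 0$ and the mass of $\g(\cdot,\d)$ is carried entirely by killing times in $[0,T/2]$, then for $t>T/2$ both integrals vanish, and such targets are not excluded by Assumption \ref{Assumption A}. The clean statement is that positivity of the \emph{first} summand for all $(t,x)$ follows from $q>0$ together with $\rho_T(\cdot,\a)\not\equiv 0$ (equivalently $\g(\cdot,\a)$ not a.e.\ zero, via the second equation of \eqref{Equation: fg Schrodinger System}); this non-degeneracy is implicit in the uSBP setup and should be invoked explicitly rather than derived from $\rho_T\neq 0$ alone.
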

As for the second function $\widehat\varphi$, for now we only give its definition in the active regime: for $x\in\R^d$ and $t\in(0,T]$,
\begin{equation}\label{dfn: phi-hat active regime}
\widehat\varphi(t,x,\a):=\int_{\R^d}q(0,z,t,x)\f(z,\a)\bR_0(dz,\a)=\int_{\R^d}q(0,z,t,x)\f(z,\a)\mu_0(dz)
\end{equation}
where we recall that $\bR_0=\mu_0\times\delta_{\a}$, and $\widehat\varphi(0,dx,\a):=\f(x,\a)\mu_0(dx)$ as a measure on $\R^d$. Note that, combining \eqref{dfn: general varphi} with \eqref{Equation: fg Schrodinger System}, we also have 
\begin{equation}\label{eqn:varphi*varphi^ at 0}
\varphi(0,x,\a)\widehat\varphi(0,dx,\a)=\rho_0(dx).   
\end{equation}
The definitions of $\widehat\varphi(\cdot,\d)$ will be given in \cref{section: case-by-case results} in a case-by-case manner for the four Scenarios. 

\subsection{General Results on the uSBP} 
Using the general theory of the regime-switching SBP \cite{zlotchevski2025schrodinger}, we can immediately obtain some results on the unbalanced Schr\"odinger bridge $\widehat\bP$. Recall that the killing rate $V$ is the regime-switch rate from regime $\a$ to regime $\d$, and that the switching rate $\d \to \a$ is identically 0. Again, Assumptions \ref{Assumption A}, \ref{Assumption B}, and \ref{Assumption C} are assumed to hold in this subsection.
\subsubsection{The unbalanced Schr\"odinger bridge SDE, killing rate, and generator}
We first present the diffusion with killing formulation of the unbalanced Schr\"odinger bridge. The following theorem is a direct translation of the general result proven in \cite{zlotchevski2025schrodinger}.
\begin{thm}\cite[Theorem 2.2.1]{zlotchevski2025schrodinger}\label{thm: P-hat SDE and generator}
Let $\bR,\rho_0,\rho_T,\varphi$ be as above, i.e., $\bR\in\cP(\Omega)$ with $\bR_0=\mu_0\times\delta_{\a}$ for some probability distribution $\mu_0$ on $\R^d$, whose canonical process solves \eqref{eqn: killing mechanism} and \eqref{SDE for Xt}, $\rho_0,\rho_T$ probability distributions on $\R^d\times\set{\a,\d}$ such that Assumptions \textup{\ref{Assumption A}} and \textup{\ref{Assumption B}} are satisfied, $\varphi$ defined as in \eqref{Definition: varphi-usbp}. Denote by $\kt$ the killing time, and recall that $\widehat\bP$, the solution to \eqref{dfn: SBP-dynamic}, takes the form of \eqref{eqn:bP^ as prod of fg}. Then, the canonical process $\proc$ under $\widehat\bP$ is again a diffusion process with killing, where the spatial component $X_t$ satisfies in the active regime the SDE: 
\begin{equation*}%\label{eqn: P-hat active SDE}
 \begin{aligned}   dX_t&= (b+a\nabla\log\varphi)(t,X_{t})dt +\sigma(t,X_{t})dB_t; \quad 0 \leq t < \kt \wedge T, \quad X_0\sim\rho_0
\end{aligned}
\end{equation*}
and the killing rate is $\displaystyle \frac{\varphi(t,\psi(t,x),\d)}{\varphi(t,x,\a)}V(t,x)$. In other words,
the generator of $\proc$ under $\widehat\bP$, denoted by $L_{\widehat\bP}$, is given by, for $f$ of class $C^{1,2}_c$ and $(t,x)\in[0,T]\times\R^d$:
\begin{equation}\label{eqn: bP-hat generator}
\begin{aligned}
        L_{\widehat\bP} f(t,x,\a)&=(b+a\nabla\log \varphi)(t,x,\a)\cdot \nabla f(t,x,\a) \\ 
        &\hspace{1cm}+ \frac{1}{2}\sum_{m,n} a_{mn}\frac{\partial^2 f}{\partial x_m \partial x_n}(t,x,\a)\\
        &\hspace{1cm}+ \frac{\varphi(t,\psi(t,x),\d)}{\varphi(t,x,\a)}V(t,x)\pran{f(t,\psi(t,x),\d)-f(t,x,\a)},
\end{aligned}
\end{equation}
and $L_{\widehat\bP} f(t,y,\d)=0$ for all $t\in[0,T]$, $y\in \textup{Range}(\psi)$.
\end{thm}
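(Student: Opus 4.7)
The plan is to invoke the general regime-switching Schr\"odinger bridge theorem of \cite{zlotchevski2025schrodinger}, whose proof is a Doob $h$-transform with $h=\varphi$; the bulk of the work here is to verify that our diffusion-with-killing model fits the hypotheses of that theorem and to check that the $h$-transformed generator simplifies to the claimed form. Since $\widehat\bP=\f(X_0,\Lambda_0)\g(X_T,\Lambda_T)\,\bR$ and $\bR$ is Markov, the tower property gives for each $t\in[0,T]$
\[
\frac{d\widehat{\bP}}{d\bR}\Big|_{\cF_t}=\f(X_0,\Lambda_0)\,\varphi(t,X_t,\Lambda_t),
\]
so the conditional law under $\widehat\bP$ of the post-$t$ trajectory given $\cF_t$ is a Doob transform of the $\bR$-conditional law by $\varphi(t,\cdot,\cdot)$. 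In particular $\widehat\bP$ is Markov, and the problem reduces to computing a single $h$-transformed generator.

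Next, \cref{proposition : harmonic varphi} supplies exactly what is needed: $\varphi(\cdot,\a)$ is of class $C^{1,2}$, strictly positive on $(0,T)\times\R^d$, and $(\partial_t+L)\varphi=0$, so $M_t:=\varphi(t,X_t,\Lambda_t)/\varphi(0,X_0,\Lambda_0)$ is a positive $\bR$-martingale, and the identity
\[
L_{\widehat\bP}f=\frac{1}{\varphi}\bigl(L(\varphi f)-fL\varphi\bigr)
\]
holds for $f$ of class $C^{1,2}_c$. I would then split the right-hand side into its diffusion and its killing contributions. For the diffusion part $L_0$, the carr\'e du champ identity $L_0(\varphi f)-\varphi L_0 f-fL_0\varphi=a\nabla\varphi\cdot\nabla f$ changes the drift by $a\nabla\log\varphi$ and leaves $a=\sigma\sigma^\top$ unchanged, while a Girsanov change of measure with density $M_t$ on $\{\Lambda_t=\a\}$ identifies the driving Brownian motion in the new SDE. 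For the killing part, using $\varphi(t,y,\d)=\g(y,\d)$ and $Lf(t,y,\d)=0$, the same identity multiplies the rate $V(t,x)$ by the weight $\varphi(t,\psi(t,x),\d)/\varphi(t,x,\a)$ and preserves the deterministic spatial jump $x\mapsto\psi(t,x)$, yielding the killing term of \eqref{eqn: bP-hat generator}. The dead regime remains absorbing because the $\d\to\a$ rate under $\bR$ is zero and $h$-transforms preserve zero rates.

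The main obstacle is making the jump-rate computation rigorous across the regime switch: one has to verify that under $\widehat\bP$ the compensator of the regime-switch point process on $\{\Lambda_{t-}=\a\}$ is indeed $\varphi(t,\psi(t,X_{t-}),\d)V(t,X_{t-})/\varphi(t,X_{t-},\a)\,dt$ and that the post-killing position remains deterministically $\psi(t,X_{t-})$ rather than being ``smeared'' by the weighting $\g(\cdot,\d)$. This follows from the structure of the killing mechanism in $\bR$, where the jump location is a deterministic function of the pre-jump position: the Doob transform only reweights the intensity $V(t,x)$, as also reflected in the cross-regime density $\tilde q(t,x,s,y)=V(s,y)q(t,x,s,y)$ recalled in \cref{subsection:impl. of assump.}. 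Assumption \ref{Assumption B}(2) supplies the boundedness of $\g$, hence the finiteness and continuity of $\varphi$, and Assumption \ref{Assumption C} together with the forward/backward PDEs from \cref{subsection:impl. of assump.} provides the regularity needed to apply It\^o's formula to $\varphi(t,X_t,\a)$ on $[0,\kt)$ and to justify the Girsanov step.
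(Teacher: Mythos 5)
Your proposal is correct and takes essentially the same route as the paper, which offers no proof of its own for this statement and simply imports it as a direct translation of the general regime-switching SBP theorem from \cite{zlotchevski2025schrodinger}. Your additional sketch of the underlying Doob $h$-transform — the martingale density $\f(X_0,\Lambda_0)\varphi(t,X_t,\Lambda_t)$, the carr\'e du champ identity producing the drift correction $a\nabla\log\varphi$, and the reweighting of the killing term by $\varphi(t,\psi(t,x),\d)/\varphi(t,x,\a)$ — is the standard argument behind the cited result and checks out.
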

\subsubsection{The unbalanced Schr\"odinger bridge transition and marginal probabilities} With the product form \eqref{eqn:bP^ as prod of fg} of $\widehat\bP$ and the introduction of $\varphi,\widehat\varphi$ in \eqref{Definition: varphi-usbp} and \eqref{dfn: phi-hat active regime}, we can also determine the transition density function, as well as the marginal density functions, of $\widehat\bP$. The formulas below have also been established in \cite{zlotchevski2025schrodinger}.
\begin{thm}\cite[Theorems 3.1.2 and 3.2.4]{zlotchevski2025schrodinger}\label{thm: transition and active marginals}
    Under the same setting as in \cref{thm: P-hat SDE and generator}, the unbalanced Schr\"odinger bridge $\widehat\bP$ admits a transition density function $\widehat q(t,x,s,y)$ for $x,y\in\R^d$ and $0\leq t < s \leq T$ (in the sense of \eqref{eqn: Definition of q}) given by
    \begin{equation}\label{eqn: p-hat transition density}
            \widehat q(t,x,s,y):=\frac{\varphi(s,y,\a)}{\varphi(t,x,\a)}q(t,x,s,y).
    \end{equation}
    Further, for every $t\in(0,T]$, the marginal distribution $\widehat\bP_t$, i.e., the distribution of $(X_t,\Lambda_t)$ under $\widehat\bP$, possesses in the active regime the density function
\begin{equation}\label{eq:marginal density active regime}
    \widehat\bP_t(x,\a):=\varphi(t,x,\a)\widehat\varphi(t,x,\a)\;\textit{ for } x\in\R^d
\end{equation}
with $\widehat\varphi$ defined as in \eqref{dfn: phi-hat active regime}, and for $t\in(0,T)$, it satisfies the Kolmogorov forward equation
\begin{equation}\label{eq:P-hat forward equation active regime}
    \begin{aligned}
            \frac{\partial }{\partial t}\widehat\bP_t(x,\a)&=\nabla \cdot ((b+a\nabla \log\varphi)\widehat\bP_t)(t,x,\a) \\
            &\hspace{0.5cm}-\frac{1}{2} \sum_{m,n}\frac{\partial^2(a_{mn}\widehat\bP_t)}{\partial x_m \partial x_n}(t,x,\a)-\frac{\varphi(t,\psi(t,x),\d)}{\varphi(t,x,\a)}V(t,x)\widehat\bP_t(x,\a).
            \end{aligned}
\end{equation}
\end{thm}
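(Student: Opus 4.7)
My plan is to exploit the product decomposition $\widehat{\bP} = \f(X_0,\Lambda_0)\g(X_T,\Lambda_T)\bR$ from \eqref{eqn:bP^ as prod of fg} together with the Markov property of $\bR$ and the backward/forward PDEs for the reference transition density $q$ recalled in \cref{subsection:impl. of assump.}. All three conclusions will then follow from a single joint-density calculation, followed by a product-rule differentiation in time.

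For the marginal formula \eqref{eq:marginal density active regime} and the transition density \eqref{eqn: p-hat transition density}, I would compute the joint distribution of $(X_t,\Lambda_t,X_s,\Lambda_s)$ under $\widehat{\bP}$ with both regimes equal to $\a$, by writing $\widehat{\bP}$-probabilities as $\bR$-expectations weighted by $\f(X_0,\Lambda_0)\g(X_T,\Lambda_T)$. Using the Markov property of $\bR$ at time $t$ together with $\bR_0 = \mu_0\times\delta_\a$, the integral over $X_0$ converts $\f(X_0,\Lambda_0)$ into $\widehat{\varphi}(t,x,\a)$ via \eqref{dfn: phi-hat active regime}. Applying the Markov property at time $s$ and integrating out the future---the surviving contribution $\g(X_T,\a)$ on $\{\Lambda_T=\a\}$ together with the killed contribution $\g(\psi(r,z),\d)$ over $\{\kt\in(s,T],\,X_{\kt-}\in dz\}$, weighted by the cross-regime density $V(r,z)q(s,y,r,z)$ from \eqref{eqn:cross-regime density}---converts the remaining $\g(X_T,\Lambda_T)$ into $\varphi(s,y,\a)$ via \eqref{Definition: varphi-usbp}. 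The resulting joint density is $\widehat{\varphi}(t,x,\a)\,q(t,x,s,y)\,\varphi(s,y,\a)$; running the same argument at a single time gives $\widehat{\bP}_t(x,\a) = \varphi(t,x,\a)\widehat{\varphi}(t,x,\a)$, and forming the ratio yields \eqref{eqn: p-hat transition density}.

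For the forward PDE \eqref{eq:P-hat forward equation active regime}, I would differentiate $\widehat{\bP}_t(x,\a) = \varphi(t,x,\a)\widehat{\varphi}(t,x,\a)$ in $t$. By \cref{proposition : harmonic varphi}, harmonicity of $\varphi$ under $L$ gives in the active regime $\partial_t\varphi(t,x,\a) = -L_0\varphi(t,x,\a) - V(t,x)\bigl(\varphi(t,\psi(t,x),\d) - \varphi(t,x,\a)\bigr)$; differentiating under the integral sign in \eqref{dfn: phi-hat active regime} while invoking the forward PDE for $q$ from \cref{subsection:impl. of assump.} yields $\partial_t\widehat{\varphi}(t,x,\a) = L_0^*\widehat{\varphi}(t,x,\a) - V(t,x)\widehat{\varphi}(t,x,\a)$. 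Applying the product rule, the $V$-terms collapse to exactly $-\tfrac{\varphi(t,\psi(t,x),\d)}{\varphi(t,x,\a)}V(t,x)\widehat{\bP}_t(x,\a)$, while the remaining combination $-\widehat{\varphi}L_0\varphi + \varphi L_0^*\widehat{\varphi}$ is recast in conservative form in $\widehat{\bP}_t$ alone via a Leibniz expansion of $\nabla\cdot((b+a\nabla\log\varphi)\varphi\widehat{\varphi})$ and $\sum_{m,n}\partial_{x_m}\partial_{x_n}(a_{mn}\varphi\widehat{\varphi})$, in which the cross-gradient contributions $\sum_{m,n}a_{mn}\,\partial_m\varphi\,\partial_n\widehat{\varphi}$ cancel after using $a\nabla\log\varphi = a\nabla\varphi/\varphi$. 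This produces the right-hand side of \eqref{eq:P-hat forward equation active regime}.

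The main obstacle will be the regularity bookkeeping required to make the above pointwise-rigorous: justifying differentiation under the integral sign for $\widehat{\varphi}$ via the Gaussian-type bounds on $q$, ensuring that $\varphi(\cdot,\a)$ is of class $C^{1,2}$ and strictly positive on $(0,T)\times\R^d$ so that $\nabla\log\varphi$ is well defined, and handling the behavior at the temporal endpoints $t=0$ and $t=T$. All of these are ensured by Assumption~\ref{Assumption C} (smoothness and uniform ellipticity) together with the integrability in Assumption~\ref{Assumption B}(1) and the boundedness in Assumption~\ref{Assumption B}(2), which are precisely the hypotheses under which the general theorems from \cite{zlotchevski2025schrodinger} cited in the statement apply.
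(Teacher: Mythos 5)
The paper gives no proof of this theorem --- it is imported verbatim from \cite{zlotchevski2025schrodinger} with only the citation and the remark that ``the formulas below have also been established'' there --- so there is no in-paper argument to compare against; your proof is the standard one and is almost certainly the route taken in the cited work. The joint-density computation is correct: integrating $\f$ against $\bR_0$ and $q(0,\cdot,t,x)$ produces $\widehat\varphi(t,x,\a)$, conditioning $\g(X_T,\Lambda_T)$ on $(X_s,\Lambda_s)=(y,\a)$ (surviving part plus the cross-regime part weighted by $Vq$) produces $\varphi(s,y,\a)$, and the ratio of the joint density to the time-$t$ marginal gives \eqref{eqn: p-hat transition density}; the one step worth making explicit is that $\widehat\bP$ inherits the Markov property of $\bR$ from the product form \eqref{eqn:bP^ as prod of fg}, which is what entitles you to read that ratio as a transition density in the sense of \eqref{eqn: Definition of q}. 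One caveat on the forward equation: carrying out your Leibniz expansion carefully, the drift contribution of $-\widehat\varphi L_0\varphi+\varphi L_0^*\widehat\varphi$ collapses to $-\nabla\cdot(b\,\varphi\widehat\varphi)$ and the second-order contribution to $+\tfrac12\sum_{m,n}\partial_{x_m}\partial_{x_n}(a_{mn}\varphi\widehat\varphi)-\nabla\cdot(a\nabla\log\varphi\,\varphi\widehat\varphi)$, so the computation yields
\[
\frac{\partial}{\partial t}\widehat\bP_t(x,\a)=-\nabla\cdot\big((b+a\nabla\log\varphi)\widehat\bP_t\big)+\frac12\sum_{m,n}\frac{\partial^2(a_{mn}\widehat\bP_t)}{\partial x_m\partial x_n}-\frac{\varphi(t,\psi(t,x),\d)}{\varphi(t,x,\a)}V(t,x)\widehat\bP_t(x,\a),
\]
the standard Fokker--Planck form consistent with the definition \eqref{eq:def of L*0 usbp} of $L_0^*$ and with the operator formulation \eqref{eqn: forward equation marginal density L-star}. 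The first two terms of \eqref{eq:P-hat forward equation active regime} as displayed carry the opposite signs; this appears to be a typo in the statement rather than a defect in your derivation, but your assertion that the expansion ``produces the right-hand side of \eqref{eq:P-hat forward equation active regime}'' should not be taken literally --- the cross-gradient terms do cancel as you say, but the divergence term comes out negative and the second-order term positive.
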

As we will see in the next section, the statement \eqref{eq:marginal density active regime} will hold in the dead regime $\d$ once we properly define $\widehat\varphi(\cdot, \d)$ for each choice of $\psi$.
\subsubsection{Stochastic control formulation for the uSBP}
To develop the stochastic control formulation for our uSBP, we further assume that, for every compact set $K\subseteq\R^d$, 
    \begin{equation}\label{eqn:cond on phi-stoch contrl}
     \int_0^T\sup_{x\in K}\abs{\nabla\log\varphi(t,x,\a)}^2dt<\infty\textup{ and }\int_0^T\sup_{x\in K}\abs{\frac{\varphi(t,\psi(t,x),\d)}{\varphi(t,x,\a)}}^2dt<\infty.
    \end{equation}
Note that \cref{proposition : harmonic varphi} already implies that $\nabla\log\varphi(t,x,\a)$ and $\frac{1}{\varphi(t,x,\a)}$ are bounded uniformly in $(t,x)$ on any compact subset of $[0,T)\times\R^d$. Thus, heuristically speaking, the conditions above extend the ``boundedness'' to the endpoint $t=T$ in the integral form.

Let $\cU$ be the class of pairs 
$(u,\xi)$ such that:
\begin{enumerate}
    \item $u:[0,T]\times\R^d \to \R^d$ and $\xi:[0,T]\times\R^d \to (0,\infty)$ are measurable functions,
    \item For every compact set $K\subseteq \R^d$, $\int_0^T\sup_{x\in K}(|u(t,x)|^2+|\xi(t,x)|^2)dt<\infty$.
    \item There exists a path measure $\bP^{(u,\xi)}\in\cP(\Omega)$ with $\bP^{(u,\xi)}_0=\rho_0$, and the  canonical process $\proc$ under $\bP^{(u,\xi)}$ is a diffusion with killing satisfying
    \[
    dX_t=[b(t,X_t)+\sigma(t,X_t)u(t,X_t)]dt+\sigma(t,X_t)dB_t,\;0\leq t<\kt\wedge T,
    \]
where $\kt$ is the killing time, and such that the killing rate is $\xi(t,x)V(t,x)$.
\end{enumerate}
We refer to the above SDE and killing rate under $\bP^{(u,\xi)}$ as the \textit{controlled SDE} and the \textit{controlled killing rate}. It is clear that the generator corresponding to $\bP^{(u,\xi)}$, denoted by $L_{\bP^{(u,\xi)}}$, is given by: for $f$ of class $C_c^{1,2}$ and $(t,x)\in[0,T]\times\R^d$,
\begin{align*}
        L_{\bP^{(u,\xi)}} f(t,x,\a)&:=[b(t,x)+\sigma(t,x)u(t,x)]\cdot \nabla f(t,x,\a) \\
        & \hspace{0.5cm} +\frac{1}{2}\sum_{m,n} a_{mn}(t,x)\frac{\partial^2 f}{\partial x_m \partial x_n}(t,x,\a)\\
        & \hspace{0.5cm} + \xi(t,x)V(t,x)\pran{f(t,\psi(t,x),\d)-f(t,x,\a)},\\
        L_{\bP^{(u,\xi)}} f(t,x,\d)&\equiv 0.
\end{align*}
Finally, we also request $(u,\xi)\in\cU$ to satisfy that
\begin{enumerate}[label=(4)]
    \item under $\bP^{(u,\xi)}$, 
    $\displaystyle
    \set{-\log\varphi(t,X_t,\Lambda_t)-\int_0^t\bigg(\frac{\partial}{\partial r}+L_{\bP^{(u,\xi)}}\bigg)(-\log\varphi)(r,X_r,\Lambda_r)dr}_{[0,T]}
    $
    is a martingale with $\varphi$ defined as in \eqref{Definition: varphi-usbp}, and
\end{enumerate}
\begin{enumerate}[label=(5)]
    \item under $\bP^{(u,\xi)}$, the distribution of $(X_T,\Lambda_T)$  is equal to $\rho_T$. \\
\end{enumerate}

\noindent \textbf{Remark. }We want to point that a particular case in which Condition (3) is fulfilled is when $\bP^{(u,\xi)}$ is a Girsanov transform of the (modified) reference measure $\bR$ associated with the original SDE and the original killing rate. Namely, let $\proc$ be the canonical process under $\bR$ and define
    \begin{equation*}
        \begin{aligned}
          Z_T^{(u,\xi)}:=\exp\bigg(\int_0^T&u(t,X_t)dB_t-\frac{1}{2}\int_0^T|u(t,X_t)|^2dt\\
          &+\int_0^T\int_{\R_+}\log \xi(t,X_t)\one_{[0,V(t,X_t))]}(w)\tilde N_1(dt,dw)\\
          &+\int_0^TV(t,X_t)[\log\xi(t,X_t)+1-\xi(t,X_t)]dt\bigg),
        \end{aligned}
    \end{equation*}
where $\tilde N_1(dt,dw)$ is the compensated Poisson random measure with intensity being the Lebesgue measure on (a bounded subset of) $[0,T]\times\R_+$. Note that Conditions (1), (2) guarantee that all the (stochastic) integrals involved in $Z^{(u,\xi)}_T$ are well-defined. Then, Condition (3) is satisfied if $\bE_\bR[Z^{(u,\xi)}_T\frac{d\rho_0}{d\mu_0}(X_0)]=1$ and $\bP^{(u,\xi)}:=Z^{(u,\xi)}_T\,\frac{d\rho_0}{d\mu_0}(X_0)\bR$; in other words, $\bP^{(u,\xi)}$ is the Girsanov transform (by $u$ and $\xi$) of $\bP^0:=\frac{d\rho_0}{d\mu_0}(X_0)\bR$ which is the modification of $\bR$ with the initial distribution modified from $\mu_0$ to $\rho_0$.\\

Applying the regime-switching SBP theory from \cite[Section 2.3]{zlotchevski2025schrodinger}, the uSBP is equivalent to the following stochastic control problem.
\begin{dfn}\label{def:stochastic control}
   The stochastic control formulation of the uSBP is to determine
\begin{equation}\label{stochastic control objective}
\begin{aligned}
    (u^*,\xi^*)=\arg\min_{(u,\xi)\in \cU} &\bE_{\bP^{(u,\xi)}}\bigg[\int_0^T\bigg(\frac{1}{2}\abs{u(t,X_t)}^2\\
    &+V(t,X_t)\big(\xi(t,X_t)\log\xi(t,X_t)+1-\xi(t,X_t)\big)\bigg)dt\bigg].
\end{aligned}
\end{equation}
\end{dfn}
\noindent The expectation on the right hand side of \eqref{stochastic control objective} coincides with $\KL{\bP^{(u,\xi)}}{\bP^0}$ when $u,\xi$ are sufficiently ``nice'' (e.g., bounded) \cite[Section 2.2]{zlotchevski2025schrodinger}. We note that the quadratic cost in terms of $u$ for modifying the drift coefficient is identical to the classic SBP for diffusions. In the uSBP setting, an additional cost is incurred in terms of $\xi$ for modifying the killing rate. 
\begin{thm}\cite[Theorem 2.3.3]{zlotchevski2025schrodinger}\label{thm:stochastic control}
    The optimal controls for the stochastic control problem \eqref{stochastic control objective} are given by, for $(t,x)\in(0,T)\times \R^d$,
\begin{equation}\label{eqn: optimal controls}
\begin{dcases}
u^*(t,x)=\sigma^\top\nabla\log \varphi(t,x,\a),\\
\xi^*(t,x) = \frac{\varphi(t,\psi(t,x),\d)}{\varphi(t,x,\a)}.
\end{dcases}
\end{equation}
\end{thm}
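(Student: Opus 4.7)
The plan is to recognize that the cost functional in \eqref{stochastic control objective} coincides with the relative entropy $\KL{\bP^{(u,\xi)}}{\bP^0}$, where $\bP^0:=\frac{d\rho_0}{d\mu_0}(X_0)\,\bR$ is the modified reference measure with initial law changed from $\mu_0$ to $\rho_0$. Once this identification is secured, the minimization over $\cU$ reduces to a classic SBP-type problem whose minimizer is the unbalanced Schr\"odinger bridge $\widehat\bP$, and the optimal $(u^*,\xi^*)$ are read off by matching the SDE coefficients and killing rate of $\widehat\bP$ from \cref{thm: P-hat SDE and generator}.

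To carry out step one, I would first treat those $(u,\xi)\in\cU$ for which $\bP^{(u,\xi)}$ arises as a Girsanov transform of $\bP^0$ (this is the class described in the Remark following \cref{def:stochastic control}). For such controls, $\log\frac{d\bP^{(u,\xi)}}{d\bP^0}=\log Z_T^{(u,\xi)}$, and taking $\bE_{\bP^{(u,\xi)}}$ converts the stochastic integral $\int_0^T u\,dB_s-\frac12\int_0^T|u|^2ds$ into $\frac12\int_0^T|u|^2ds$ after the Girsanov drift shift, and converts the jump-measure pieces into $\int_0^T V(\xi\log\xi+1-\xi)\,ds$ using that the compensator of $\one_{[0,V)}\tilde N_1$ under $\bP^{(u,\xi)}$ is $\xi V\,ds\,dw$. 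Summing reproduces the integrand in \eqref{stochastic control objective}. Since $d\bP^0/d\bR=\frac{d\rho_0}{d\mu_0}(X_0)$ depends only on $(X_0,\Lambda_0)$, for any admissible $\bP$ with $\bP_0=\rho_0$ one has $\KL{\bP}{\bP^0}=\KL{\bP}{\bR}-\KL{\rho_0}{\mu_0}$, so the two entropy-minimization problems share the same minimizer, namely $\widehat\bP$. Approximation arguments from \cite[Section 2.2]{zlotchevski2025schrodinger} extend the cost-equals-KL identity to the full class $\cU$.

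Step two is then identification: setting $\bP^{(u^*,\xi^*)}=\widehat\bP$ and comparing the generator $L_{\bP^{(u^*,\xi^*)}}$ with \eqref{eqn: bP-hat generator}, the drift match $b+\sigma u^*=b+a\nabla\log\varphi=b+\sigma(\sigma^\top\nabla\log\varphi)$ gives $u^*=\sigma^\top\nabla\log\varphi$, and the killing-rate match $\xi^* V=\frac{\varphi(t,\psi(t,x),\d)}{\varphi(t,x,\a)}V$ gives the claimed formula for $\xi^*$. The main obstacle is the verification that $(u^*,\xi^*)$ belongs to $\cU$: condition (2) of the local $L^2$-integrability is exactly the hypothesis \eqref{eqn:cond on phi-stoch contrl}; condition (3) on the existence of the controlled process is supplied by \cref{thm: P-hat SDE and generator} with $\bP^{(u^*,\xi^*)}=\widehat\bP$; condition (5) on the terminal marginal is built into the definition of $\widehat\bP$; and condition (4) on the martingale property of $-\log\varphi$ minus its drift follows from the harmonicity $(\frac{\partial}{\partial t}+L)\varphi=0$ of \cref{proposition : harmonic varphi} combined with an It\^o--L\'evy expansion of $-\log\varphi(t,X_t,\Lambda_t)$ under $\bP^{(u^*,\xi^*)}$, whose local-martingale part is genuinely a martingale thanks to the integrability conditions in \eqref{eqn:cond on phi-stoch contrl} and the boundedness of $\varphi$ on compact time-space sets from \cref{proposition : harmonic varphi}.
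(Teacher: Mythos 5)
Your proposal is correct and follows essentially the same route as the paper, which states this result as an import from \cite[Theorem 2.3.3]{zlotchevski2025schrodinger} without a self-contained proof but sketches exactly your two ingredients in the surrounding text: the remark that the cost functional in \eqref{stochastic control objective} equals $\KL{\bP^{(u,\xi)}}{\bP^0}$ (so the control problem is the uSBP in disguise), and the observation that \cref{proposition : harmonic varphi} together with \eqref{eqn:cond on phi-stoch contrl} guarantees $(u^*,\xi^*)\in\cU$ with $\bP^{(u^*,\xi^*)}=\widehat\bP$. Your Girsanov computation of the entropy, the reduction $\KL{\bP}{\bP^0}=\KL{\bP}{\bR}-\KL{\rho_0}{\mu_0}$ for admissible $\bP$, and the coefficient matching against \eqref{eqn: bP-hat generator} are all sound.
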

Note that \cref{proposition : harmonic varphi} and the condition \eqref{eqn:cond on phi-stoch contrl} on $\varphi$ guarantee that $(u^*,\xi^*)$ given in \eqref{eqn: optimal controls} are indeed in $\cU$. As expected, this choice recovers the form of the unbalanced Schr\"odinger bridge described above, with $\bP^{(u^*,\xi^*)}=\widehat\bP$.
\section{The Four Scenarios: Case-by-Case Results}\label{section: case-by-case results}
In the previous section, we have derived general results on the uSBP that hold for any choice of $\psi$ and thus cover a wide range of uSBP scenarios. However, there remain certain aspects of the uSBP which need to be treated on a case-by-case basis. For example, the analogue of \eqref{eq:marginal density active regime} in the dead regime cannot be stated in general terms because there is no ``universal equivalent'' to \eqref{dfn: phi-hat active regime} for $\widehat\varphi(t,x,\d)$. 
In this section, we will cover Scenarios (1), (2), (3), (4) as described in \cref{Subsection: Scenarios description}; other choices of $\psi$ may be approached by the same techniques with some adaptations. We will continue working under Assumptions \ref{Assumption A}, \ref{Assumption B}, and \ref{Assumption C} throughout this section.
\subsection{The Marginal Distribution of the Killed Particles}
We begin by discussing the function $\widehat\varphi$ in regime $\d$. Based on the theory of the regime-switching SBP \cite{zlotchevski2025schrodinger}, the function $\widehat\varphi$ is generally defined as
\begin{equation}\label{dfn: phi-hat general expectation}
    \widehat\varphi(t,x,i)=\bE_{\bR}\brac{\f(X_0,\Lambda_0);(X_t,\Lambda_t)=(x,i)}.
\end{equation}
Thus, we expect $\widehat\varphi(t,\cdot)$ to be a ``forward evolution'' of $\widehat\varphi(0,\cdot):=\f\bR_0$, and that expressions such as \eqref{eq:marginal density active regime} hold in every regime. On the other hand, the marginal distribution $\widehat\bP_t(\cdot,\d)$ is directly influenced by the choice of $\psi$. In particular, its support is the set $\textup{Range}(\psi)$, and thus $\widehat\bP_t(\cdot,\d)$ is not guaranteed to have a Lebesgue density. Recall that we are interested in four uSBP scenarios:
\begin{enumerate}%[label=\arabic*)]
    \item $\psi^\1(t,x)=\db{x,t}$, 
    \item  $\psi^\2(t,x)=\db{\c,t}$, where $\c\in\R^d$ is fixed, 
    \item $\psi^\3(t,x)=x$, 
    \item $\psi^\4(t,x)=\c$. 
\end{enumerate}
Inspecting \eqref{dfn: phi-hat general expectation}, for $i=\d$ and $x\in \textup{Range}(\psi)$, 
this expression for $\widehat\varphi(t,x,\d)$ can be viewed as ``adding up'' (or integrating) the values of \[\int_{\R^d}q(0,z,r,y)V(r,y) \f(z,\a)\mu_0(dz)\] over the set \[\set{(r,y)\in (0,t]\times\R^d : \psi(r,y)=x}.\]
Hence, for each of the choices of $\psi$ above, define respectively, for $t,\t \in(0,T]$ and $x\in\R^d$:
\begin{enumerate}
    \item $\widehat\varphi^\1(t,\db{x,\t},\d)= \one_{t \geq \t}\int_{\R^d}q(0,z,\t,x)V(\t,x) \f(z,\a)\mu_0(dz)$,
    \item $\widehat\varphi^\2(t,\db{\c,\t},\d)= \one_{t \geq \t}\int_{\R^d}\int_{\R^d}q(0,z,\t,y)V(\t,y) \f(z,\a)\mu_0(dz)dy$,
    \item $\widehat\varphi^\3(t,x,\d)=\int_0^t\int_{\R^d}q(0,z,r,x)V(r,x) \f(z,\a)\mu_0(dz)dr$,
    \item $\widehat\varphi^\4(t,\c,\d)=\int_{\R^d}\int_0^t\int_{\R^d}q(0,z,r,y)V(r,y) \f(z,\a)\mu_0(dz)drdy$;
\end{enumerate}
by \eqref{dfn: phi-hat active regime}, the expressions above can also be written as 
\begin{equation}\label{eqs: phi-hat killed regime}
    \begin{dcases}
        \widehat\varphi^\1(t,\db{x,\t},\d)= \one_{t \geq \t}V(\t,x)\widehat\varphi(\t,x,\a),\\
        \widehat\varphi^\2(t,\db{\c,\t},\d)= \one_{t \geq \t}\int_{\R^d}V(\t,y)\widehat\varphi(\t,y,\a)dy,\\
        \widehat\varphi^\3(t,x,\d)=\int_0^t V(r,x)\widehat\varphi(r,x,\a)dr, \\
        \widehat\varphi^\4(t,\c,\d)=\int_{\R^d}\int_0^t V(r,y)\widehat\varphi(r,y,\a)drdy.
    \end{dcases}
\end{equation}
For $t=0$ or $\t=0$, we identify $\widehat\varphi(\cdot,\d)\equiv0$ in all Scenarios. 
\begin{thm}
    For $t\in(0,T]$, the marginal distribution of the unbalanced Schr\"odinger bridge in the dead regime, denoted by $\widehat\bP^{(k)}_t(\cdot,\d)$ in Scenario $(k)$, for $k\in\set{1,2,3,4}$, admits a density function
    \begin{equation}\label{eq: marginal density killed regime}
        \widehat\bP^{(k)}_t(y,\d)=\varphi^{(k)}(t,y,\d)\widehat\varphi^{(k)}(t,y,\d) \; \textup{ for }y \in \textup{Range}(\psi^{(k)}),
    \end{equation}
    in the sense specified in Scenarios (1), (2), (3), (4) respectively as:
    \begin{enumerate}
    \item $\widehat\bP^\1_t(\db{x,\t},\d)$ is a density with respect to the Lebesgue measure on $\R^d\times[0,T]$,
    \item $\widehat\bP^\2_t(\db{\c,\t},\d)$ is a density with respect to the Lebesgue measure on $[0,T]$,
    \item $\widehat\bP^\3_t(x,\d)$ is a density with respect to the Lebesgue measure on $\R^d$,
    \item $\widehat\bP^\4_t(\c,\d)$ is a Dirac delta distribution at $\set{\c}$.
\end{enumerate}
Further, in Scenarios (1) and (2), we have that for all $t\in[0,T],x\in\R^d,\t\in[0,T]$,
\begin{equation}\label{eqn: Scenarios 1,2 killed forward}
\begin{aligned}
    \widehat\bP^\1_t(\db{x,\t},\d)&=\one_{t\geq\t}\;\rho_T(\db{x,\t},\d),\\
    \widehat\bP^\2_t(\db{\c,\t},\d)&=\one_{t\geq\t}\;\rho_T(\db{\c,\t},\d).
\end{aligned}
\end{equation}
In Scenario (3), the density $\widehat\bP^\3_t(\cdot,\d)$ satisfies for $t\in(0,T)$ the forward equation
\begin{equation}\label{eqn: Scenario 3 killed forward}
    \frac{\partial }{\partial t}\widehat\bP^\3_t(x,\d)=\frac{\varphi^\3(t,x,\d)}{\varphi^\3(t,x,\a)}V(t,x)\widehat\bP^\3_t(x,\a) \text{ for Lebesgue-a.e. }x\in\R^d,
\end{equation}
and in Scenario (4), the scalar $\widehat\bP^\4_t(\c,\d)$ satisfies for $t\in(0,T)$ the forward equation
\begin{equation}\label{eqn: Scenario 4 killed forward}
    \frac{\partial }{\partial t}\widehat\bP^\4_t(\c,\d)=\int_{\R^d}\frac{\varphi^\4(t,\c,\d)}{\varphi^\4(t,x,\a)}V(t,x)\widehat\bP^\4_t(x,\a)dx.
\end{equation}
\end{thm}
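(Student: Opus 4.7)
The plan is to derive a single master formula for the dead-regime marginal distribution $\widehat{\bP}_t(\cdot,\d)$ under $\widehat{\bP}$, and then specialize it to each of the four choices of $\psi^{(k)}$ by pushing forward through $\psi^{(k)}$. First, I would use the product representation $\widehat\bP=\f(X_0,\Lambda_0)\g(X_T,\Lambda_T)\bR$ and condition on $\cF_t$: for any measurable $A$ in the state space,
\[
\widehat\bP_t(A,\d)=\Esub{\bR}{\f(X_0,\a)\,\one_{(X_t,\Lambda_t)\in A\times\{\d\}}\,\esub{\bR}\brac{\g(X_T,\Lambda_T)\mid\cF_t}}.
\]
Using that $\esub{\bR}[\g(X_T,\Lambda_T)\mid(X_t,\Lambda_t)=(y,\d)]=\varphi(t,y,\d)=\g(y,\d)$ by \eqref{Definition: varphi-usbp}, together with the cross-regime transition density \eqref{eqn:cross-regime density}, I would unfold the expectation using the joint law of $(X_0,\kt,X_{\kt-})$ under $\bR$. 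This yields the unified expression
\[
\widehat\bP_t(A,\d)=\int_0^t\!\!\int_{\R^d}V(r,y)\,\widehat\varphi(r,y,\a)\,\g(\psi(r,y),\d)\,\one_A(\psi(r,y))\,dy\,dr,
\]
where $\widehat\varphi(r,y,\a)=\int q(0,z,r,y)\f(z,\a)\mu_0(dz)$ as in \eqref{dfn: phi-hat active regime}.

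Next, I would identify the density of $\widehat\bP^{(k)}_t(\cdot,\d)$ case by case by analyzing the pushforward of $dr\,dy$ through $\psi^{(k)}$. In Scenario (1), $\psi^{(1)}(r,y)=\db{y,r}$ is a bijection with unit Jacobian, so the density in $(x,\t)$ is read off directly and matches $\varphi^{(1)}(t,\db{x,\t},\d)\widehat\varphi^{(1)}(t,\db{x,\t},\d)$ via \eqref{eqs: phi-hat killed regime}. In Scenario (2), the spatial variable $y$ is collapsed to $\c$, so integrating it out produces the factor $\int V(\t,y)\widehat\varphi(\t,y,\a)dy$, which is exactly $\widehat\varphi^{(2)}(t,\db{\c,\t},\d)$. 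In Scenario (3), the time variable $r$ is integrated out, yielding the $r$-integral in $\widehat\varphi^{(3)}(t,x,\d)$. In Scenario (4), both variables are collapsed to $\{\c\}$, producing a Dirac mass with weight $\varphi^{(4)}(t,\c,\d)\widehat\varphi^{(4)}(t,\c,\d)$. In every case, \eqref{eq: marginal density killed regime} follows immediately.

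For the forward equations in Scenarios (3) and (4), I would simply differentiate the density formula in $t$: since $\varphi^{(k)}(t,\cdot,\d)\equiv\g(\cdot,\d)$ is constant in $t$ and $\widehat\varphi^{(k)}(t,\cdot,\d)$ is a running time-integral, the fundamental theorem of calculus gives $\partial_t(\varphi^{(k)}\widehat\varphi^{(k)})=\g(\cdot,\d)V(t,\cdot)\widehat\varphi(t,\cdot,\a)$ (plus integration in $y$ in Scenario (4)), and rewriting this using \eqref{eq:marginal density active regime} and $\varphi^{(k)}(t,\cdot,\d)=\g(\cdot,\d)$ yields \eqref{eqn: Scenario 3 killed forward} and \eqref{eqn: Scenario 4 killed forward}. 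For the explicit formulas \eqref{eqn: Scenarios 1,2 killed forward} in Scenarios (1) and (2), the argument is purely structural: in these Scenarios, the augmented coordinate permanently records the killing time, so once a particle is killed at time $\t$ its joint state $(X_t,\t_t,\Lambda_t)$ is frozen for all $t\geq\t$; hence $\widehat\bP^{(k)}_t(\db{x,\t},\d)$ is independent of $t$ on $\{t\geq\t\}$, and evaluating at $t=T$ yields $\rho_T(\db{x,\t},\d)$ by admissibility, while it vanishes on $\{t<\t\}$ since no such killing has occurred.

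The step that requires the most care is bookkeeping of the different notions of density across the four Scenarios, since $\widehat\bP^{(k)}_t(\cdot,\d)$ is supported on a different set (a full $(d{+}1)$-dimensional region, a one-dimensional fiber, a $d$-dimensional slice, or a single point) in each case, so one must verify that the pushforward of $dr\,dy$ through $\psi^{(k)}$ really produces the measure claimed in each of items (1)--(4) and matches the correct product $\varphi^{(k)}\widehat\varphi^{(k)}$. Once this identification is made carefully in Scenario (1), the remaining Scenarios follow by marginalizing the diagram at the end of \cref{subsection:four scenarios}, which provides a consistency check on the case-by-case computation.
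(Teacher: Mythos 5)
Your proposal is correct and follows essentially the same route as the paper: both reduce everything to the joint law of the killing time and killing location under $\widehat\bP$, which is exactly your master formula $\int_0^t\int_{\R^d}\varphi(r,\psi(r,y),\d)V(r,y)\widehat\varphi(r,y,\a)\,dy\,dr$, then specialize through each $\psi^{(k)}$ and match against \eqref{eqs: phi-hat killed regime}, and obtain the forward equations by differentiating $\varphi\widehat\varphi$ in $t$ with $\partial_t\varphi(\cdot,\d)=0$. The only cosmetic differences are that you derive the master formula from the static product form $\f\g\bR$ by conditioning on $\cF_t$ under $\bR$, whereas the paper reads it off from the $\widehat\bP$-transition density \eqref{eqn: p-hat transition density} and the modified killing rate, and that you justify \eqref{eqn: Scenarios 1,2 killed forward} by the ``frozen state'' observation rather than the paper's one-line identity $\varphi^{(k)}(t,y,\d)\widehat\varphi^{(k)}(t,y,\d)=\one_{t\geq\t}\,\varphi^{(k)}(T,y,\d)\widehat\varphi^{(k)}(T,y,\d)$; both variants are sound.
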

\begin{proof}
    Once \eqref{eq: marginal density killed regime} is established, the relations \eqref{eqn: Scenarios 1,2 killed forward} can be derived by observing that for $k=1,2$,
    \[
    \varphi^{(k)}(t,y,\d)\widehat\varphi^{(k)}(t,y,\d)=\one_{t\geq\t}\;\varphi^{(k)}(T,y,\d)\widehat\varphi^{(k)}(T,y,\d)=\one_{t\geq\t}\;\rho_T^{(k)}(y,\d).
    \]
    The last part of the theorem (equations \eqref{eqn: Scenario 3 killed forward} and \eqref{eqn: Scenario 4 killed forward}) can be obtained by computing the time derivative $\frac{\partial}{\partial t}(\varphi\widehat\varphi)$, recalling $\frac{\partial}{\partial t}\varphi(\cdot,\d)=0$, using the expressions of $\widehat\varphi(\cdot,\d)$ above and simplifying using \cref{eq:marginal density active regime}.

Let us prove that Equation \eqref{eq: marginal density killed regime} holds in all four Scenarios. By \cref{thm: P-hat SDE and generator} and \cref{thm: transition and active marginals}, we have the transition density and the killing rate of the unbalanced Schr\"odinger bridge $\widehat\bP$, and we can therefore compute the marginal probability distribution. Let $\kt$ be the killing time. Then, given $0<t\leq T$, for every Borel $B\subseteq \R^d$ and $0< s'<s\leq t$,
\begin{equation}\label{eqn:joint dist of killing location+time under bP^}
    \begin{aligned}
        \widehat\bP&(X_{\kt-} \in B,\kt\in(s',s])\\&=\int_{s'}^s\int_{B}\int_{\R^d} \frac{\varphi(r,y,\a)}{\varphi(0,x,\a)}q(0,x,r,y) \frac{\varphi(r,\psi(r,y),\d)}{\varphi(r,y,\a)}V(r,y)\rho_0(dx)dydr\\
        &=\int_{s'}^s\int_{B}\varphi(r,\psi(r,y),\d) V(r,y)\int_{\R^d} q(0,x,r,y)\widehat\varphi(0,dx,\a)dydr\\
        &=\int_{s'}^s\int_{B}\varphi(r,\psi(r,y),\d) V(r,y)\widehat\varphi(r,y,\a)dydr
    \end{aligned}
\end{equation}
where we also used \eqref{dfn: phi-hat active regime} and \eqref{eqn:varphi*varphi^ at 0}. The joint distribution of the killing locations and the killing times under $\widehat\bP$ is fully determined by \eqref{eqn:joint dist of killing location+time under bP^}. As a result, recalling that $\varphi(\cdot,\d)$ is constant in time and applying the specific $\psi$ in Scenarios (1), (2), (3), (4) respectively, we obtain, for all $x\in\R^d$ and $\t \in [0,T]$:
\begin{enumerate}
    \item $\widehat\bP^\1_t(\db{x,\t},\d)=\one_{t \geq \t}\;\varphi^\1(t,\db{x,\t},\d) V(\t,x)\widehat\varphi(\t,x,\a)$,
    \item $\widehat\bP^\2_t(\db{\c,\t},\d)=\one_{t \geq \t}\;\varphi^\2(t,\db{\c,\t},\d) \int_{\R^d}V(\t,y)\widehat\varphi(\t,y,\a)dy$,
    \item $\widehat\bP^\3_t(x,\d)=\varphi^\3(t,x,\d) \int_{0}^{t}V(r,x)\widehat\varphi(r,x,\a)dr$,
    \item $\widehat\bP^\4_t(\c,\d)=\varphi^\4(t,\c,\d) \int_{\R^d}\int_{0}^{t}V(r,y)\widehat\varphi(r,y,\a)dr dy$.
\end{enumerate}
Hence, observing each corresponding equation \eqref{eqs: phi-hat killed regime}, we arrive at \eqref{eq: marginal density killed regime}.
    
\end{proof}
\subsection{The Static Schr\"odinger System and the Fortet-Sinkhorn Algorithm}\label{Subsection: static system}
For Scenarios (1), (2), (3), (4), the last equation in the static Schr\"odinger system \eqref{Equation: fg Schrodinger System} is respectively:
\begin{enumerate}
    \item  For $\bR_T$-a.e. $(\db{x,\t},\d)$, where $\db{x,\t}\in\R^d\times[0,T]$,
    \begin{equation}\label{eqn: Scenario (1) third equation Schrodinger system}
        \g(\db{x,\t},\d)\Esub{\bR}{\f(X_0,\Lambda_0)|(\db{X_T,\t_t},\Lambda_T)=(\db{x,\t},\d)} = \frac{d\rho_T}{d\bR_T}(\db{x,\t},\d).
    \end{equation}
    \item  For $\bR_T$-a.e. $(\db{\c,\t},\d)$, where $\c \in\R^d$ is fixed and $\t\in[0,T]$,
    \begin{equation}\label{eqn: Scenario (2) third equation Schrodinger system} 
         \g(\db{\c,\t},\d)\Esub{\bR}{\f(X_0,\Lambda_0)|(\db{X_T,\t_t},\Lambda_T)=(\db{\c,\t},\d)} = \frac{d\rho_T}{d\bR_T}(\db{\c,\t},\d).
    \end{equation}
    \item For $\bR_T$-a.e. $(x,\d)$, where $x\in\R^d$,
    \[
    \g(x,\d)\Esub{\bR}{\f(X_0,\Lambda_0)|(X_T,\Lambda_T)=(x,\d)} = \frac{d\rho_T}{d\bR_T}(x,\d).
    \]
    \item At the point $(\c,\d)$,
    \[
    \g(\c,\d)\Esub{\bR}{\f(X_0,\Lambda_0)|(X_T,\Lambda_T)=(\c,\d)} = \frac{d\rho_T}{d\bR_T}(\c,\d).
    \]
\end{enumerate}

Consequently, following a standard argument (see e.g. \cite{zlotchevski2025schrodinger}), the static Schr\"odinger system \eqref{Equation: fg Schrodinger System} can be rewritten as, for Lebesgue-a.e. $x\in\R^d$ and $y\in\textup{Range}(\psi)$, 
\begin{equation}\label{eqn: phi-phihat static system}
\left\{
    \begin{aligned}
        &\begin{aligned}
    \varphi(0,{x},\a)&= \int_{\R^d} q(0,x,T,z)\varphi(T,{z},\a)dz\\
    &\hspace{3cm}+\int_{\R^d}\int_0^T q(0,x,\t,z)V(\t,z)\varphi(T,\psi(\t,z),\d)d\t dz,
    \end{aligned}\\      
    &\varphi(0,y,\d)=\varphi(T,y,\d),\\
    &\widehat\varphi(T,{x},\a)=\int_{\R^d} q(0,z,T,x)\widehat\varphi(0,dz,\a),\\ 
    &\widehat\varphi(T,y,\d)= \text{see equations below.}\\
    &\varphi(0,x,\a)\widehat\varphi(0,dx,\a)=\rho_0(dx)\;\text{as a measure on }\R^d,\\
    &\varphi(T,\cdot)\widehat\varphi(T,\cdot)=\rho_T(\cdot).
    \end{aligned}
\right.
\end{equation}
By \eqref{eqs: phi-hat killed regime}, the expression for $\widehat\varphi(T,y,\d)$ with $y\in\textup{Range}(\psi)$, in Scenarios (1), (2), (3), (4), is respectively:
\begin{enumerate}
    \item $\widehat\varphi^\1(T,\db{x,\t},\d)=\int_{\R^d}q(0,z,\t,x)V(\t,x) \widehat\varphi(0,dz,\a)$,
    \item $\widehat\varphi^\2(T,\db{\c,\t},\d)=\int_{\R^d}\int_{\R^d}q(0,z,\t,y)V(\t,y) \widehat\varphi(0,dz,\a)dy$,
    \item $\widehat\varphi^\3(T,x,\d)=\int_0^T\int_{\R^d}q(0,z,r,x)V(r,x) \widehat\varphi(0,dz,\a)dr$,
    \item $\widehat\varphi^\4(T,\c,\d)=\int_{\R^d}\int_0^T\int_{\R^d}q(0,z,r,y)V(r,y) \widehat\varphi(0,dz,\a)drdy$.
\end{enumerate}

When $\mu_0$ admits a density function $\mu_0(x)$ with respect to the Lebesgue measure (and hence so does $\rho_0$, i.e., $\rho_0(dx)=\rho_0(x)dx$), $\widehat\varphi(0,x,\a)=\f(x,\a)\mu_0(x)$ is then defined as a function on $\R^d$. In this case, the above form \eqref{eqn: phi-phihat static system} of the static Schr\"odinger system allows one to compute $(\varphi,\widehat\varphi)$ via a computationally efficient algorithm called the \textit{Fortet-Sinkhorn algorithm}. It consists of a loop that alternates through the integrals and boundary constraints of \eqref{eqn: phi-phihat static system}:
\begin{equation*}
    {
    \widehat\varphi(T,\cdot)
    }
        \mapsto
    {
    \varphi(T,\cdot)
    }
        \mapsto
    {
    \varphi(0,\cdot)
    }
        \mapsto
    {
    \widehat\varphi(0,\cdot)
    }
        \mapsto
    {
    \widehat\varphi(T,\cdot)_{\text{next}}.
    }
\end{equation*}
It is shown in \cite{chen2022most} that for Scenario (4), if $(x,t,y)\in\R^d\times(0,T] \times \R^d\mapsto q(0,x,t,y)$ is continuous and positive on compact sets, and $(t,x)\in[0,T]\times\R^d\mapsto V(t,x)$ is continuous and not identically zero, then the Fortet-Sinkhorn algorithm converges to the unique solution of the static Schr\"odinger system. 
The argument can be easily adapted to show that under the same conditions on $q$ and $V$, the convergence result also holds in Scenarios (1), (2), and (3). We do not expand on this point here and refer interested readers to \cite{chen2016entropic,chen2022most,eldesoukey2024excursion,eldesoukey2025inferring}. 

\subsection{The Dynamic Schr\"odinger System and the Kolmogorov Forward Equation}
Let us first recall some universal (for all $\psi$) properties of $\varphi(\cdot,\a)$ and $\widehat\varphi(\cdot,\a)$. By \cref{proposition : harmonic varphi}, $\varphi$ is harmonic, meaning that, for every $t\in(0,T)$, $x\in\R^d$ and $y\in\textup{Range}(\psi)$,
\begin{equation*}
\begin{dcases}
        \frac{\partial\varphi }{\partial t}(t,x,\a)=-L_0\varphi(t,x,\a)-V(t,x)(\varphi(t,\psi(t,x),\d)-\varphi(t,x,\a)),\\
       \frac{\partial \varphi}{\partial t}(t,y,\d)=0.
\end{dcases}
\end{equation*} 
In the same vein, using the property of $q$ combined with the integrability Assumption \ref{Assumption B}, it is straightforward to deduce that, for every $t\in(0,T)$ and $x\in\R^d$, 
\[
\pran{-\frac{\partial }{\partial t}+ L^*_0}\widehat\varphi(t,x,\a)-V(t,x)\widehat\varphi(t,x,\a)=0,
\]
where $L^*_0$ is defined by \eqref{eq:def of L*0 usbp}.\\

Combining these facts with the expressions for $\widehat\varphi(\cdot,\d)$ obtained above, we have the following relations in Scenario $(k)$, $k=1,2,3,4$: for $t\in(0,T),x\in\R^d, y\in\textup{Range}(\psi)$,
\begin{equation*}\label{Dynamic Schrodinger System}
    \begin{dcases}
    \begin{aligned}
     \pran{\frac{\partial }{\partial t}+L_0}&\varphi(t,x,\a)\\&+V(t,x)(\varphi(t,\psi(t,x),\d)-\varphi(t,x,\a))=0,   
    \end{aligned}
    \;&\frac{\partial \varphi}{\partial t}(t,y,\d)=0.\\
    \pran{-\frac{\partial }{\partial t}+ L^*_0}\widehat\varphi(t,x,\a)-V(t,x)\widehat\varphi(t,x,\a)=0,\;& \widehat\varphi(t,y,\d)= \text{Equations } \eqref{eqs: phi-hat killed regime}.\\
    \varphi(0,x,\a)\widehat\varphi(0,dx,\a)=\rho_0(dx),\;&\varphi(0,y,\d)\widehat\varphi(0,y,\d)=0.\\
    \varphi(T,x,\a)\widehat\varphi(T,x,\a)=\rho_T(x,\a),\;&\varphi(T,y,\d)\widehat\varphi(T,y,\d)=\rho_T(y,\d).
    \end{dcases}
\end{equation*}
This system can be seen as the time-continuum extension of \eqref{eqn: phi-phihat static system}, and we therefore refer to it as the \textit{dynamic Schr\"odinger system}.
In Scenarios (3) and (4), we obtain additional information on the dynamic aspect of $\varphi$ and $\widehat\varphi$.

First, let us recall that Scenario (3) has already been treated in \cite{zlotchevski2025schrodinger}, in which the above system is compiled into the Kolmogorov backward and forward equations, equipped with a pair of boundary conditions. We state this result below without proof.
\begin{thm}\label{thm: dynamic system with L-star}\cite[Section 3.2]{zlotchevski2025schrodinger}
    In Scenario (3), the pair $(\varphi,\widehat\varphi)$ satisfies the dynamic Schr\"odinger system
    \begin{equation}\label{eqn: dynamic system using L-star}
    \begin{dcases}
    \begin{aligned}
       &\pran{\frac{\partial}{\partial t}+L} \varphi(t,x,i) = 0\\
       &\pran{-\frac{\partial}{\partial t}+L^*} \widehat\varphi(t,x,i) = 0
    \end{aligned}
        &\textit{ for } (t,x,i)\in(0,T)\times\R^d\times\rSet,\\
     \begin{aligned}
     &\varphi(0,x,i)\widehat\varphi(0,dx,i)=\rho_0(dx,i)\\
     &\varphi(T,x,i)\widehat\varphi(T,x,i)=\rho_T(x,i)
     \end{aligned}  & \textit{ for Lebesgue-a.e. }x\in\R^d\textit{ and }i\in\rSet,
    \end{dcases}
\end{equation}
where $L$ is the operator \eqref{dfn: L} with $\psi(t,x)=x$ and $L^*$ is its adjoint operator with respect to the inner product
\begin{equation}\label{dfn: Scenario 3 inner product}
    \inn{f,g} = \int_0^T\int_{\R^d} f(t,x,\a)g(t,x,\a)dxdt+\int_0^T\int_{\R^d} f(t,x,\d)g(t,x,\d)dxdt.
\end{equation}
Moreover, if $L_{\widehat\bP}$ is the operator given by \eqref{eqn: bP-hat generator} with $\psi(t,x)=x$, then in Scenario (3), the marginal distribution $\widehat\bP_t$ satisfies for $t\in(0,T)$ and $(x,i)\in \R^d \times \set{\a,\d}$,
    \begin{equation}\label{eqn: forward equation marginal density L-star}
        \frac{\partial }{\partial t}\widehat\bP_t(x,i) = L_{\widehat\bP}^*\widehat\bP_t(x,i),
    \end{equation}
    where $L_{\widehat\bP}^*$ is the adjoint of $L_{\widehat\bP}$ with respect to the inner product \eqref{dfn: Scenario 3 inner product}.
\end{thm}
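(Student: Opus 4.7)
My plan is to verify each piece of the system in turn, leveraging the facts already established in \cref{section: general results}. For the $\varphi$-equation, the active-regime claim $(\partial_t + L)\varphi(\cdot,\a) = 0$ is exactly \cref{proposition : harmonic varphi}, and the dead-regime claim holds trivially since $\varphi(t,y,\d) = \g(y,\d)$ is constant in $t$ while $L\varphi(t,y,\d) \equiv 0$ by the definition \eqref{dfn: L}. Thus the first display is really just a compact rewriting of known facts; its content is that both regimes can be unified under the single operator $L$.

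For the $\widehat\varphi$-equation, I would first compute $L^*$ explicitly via integration by parts against the inner product \eqref{dfn: Scenario 3 inner product}. Since $\psi(t,x)=x$ in Scenario (3), the jump contribution in $L$ is $V(t,x)(f(t,x,\d) - f(t,x,\a))$, and a short computation yields
\begin{equation*}
    L^* g(t,x,\a) = L_0^* g(t,x,\a) - V(t,x) g(t,x,\a), \qquad L^* g(t,x,\d) = V(t,x) g(t,x,\a).
\end{equation*}
Next, differentiating \eqref{dfn: phi-hat active regime} under the integral sign and using the forward PDE for $q$ recorded after \eqref{eq:def of L*0 usbp}, I obtain $\partial_t \widehat\varphi(t,x,\a) = (L_0^* - V)\widehat\varphi(t,x,\a) = L^* \widehat\varphi(t,x,\a)$. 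For the dead regime, the Scenario (3) expression $\widehat\varphi^\3(t,x,\d) = \int_0^t V(r,x)\widehat\varphi(r,x,\a)\, dr$ from \eqref{eqs: phi-hat killed regime} differentiates to $\partial_t \widehat\varphi^\3(t,x,\d) = V(t,x)\widehat\varphi(t,x,\a) = L^* \widehat\varphi(t,x,\d)$, as required.

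The boundary conditions at $t=0$ in the active regime are \eqref{eqn:varphi*varphi^ at 0}; in the dead regime both sides vanish since $\widehat\varphi^\3(0,\cdot,\d) \equiv 0$ and $\rho_0(\cdot,\d)=0$. The conditions at $t=T$ follow by combining the marginal density formulas \eqref{eq:marginal density active regime} and \eqref{eq: marginal density killed regime} with the admissibility constraint $\widehat\bP_T = \rho_T$. For the Kolmogorov forward equation \eqref{eqn: forward equation marginal density L-star}, I would compute $L_{\widehat\bP}^*$ directly from \eqref{eqn: bP-hat generator} using integration by parts against the same inner product; in the active regime the result matches \eqref{eq:P-hat forward equation active regime}, and in the dead regime the scalar identity \eqref{eqn: Scenario 3 killed forward} reads $\partial_t \widehat\bP^\3_t(x,\d) = \frac{\varphi^\3(t,x,\d)}{\varphi^\3(t,x,\a)} V(t,x)\widehat\bP^\3_t(x,\a)$, which is precisely $L_{\widehat\bP}^* \widehat\bP_t(x,\d)$ from the adjoint computation.

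The main obstacle I anticipate is the careful bookkeeping of signs in the adjoint calculations for both $L^*$ and $L_{\widehat\bP}^*$, together with the verification that all integration by parts steps are justified (i.e., boundary terms vanish, which follows from the integrability of $\widehat\varphi(\cdot,\a)$ and the decay properties inherited from $q$ under Assumption \ref{Assumption C}). Beyond that, the substantive generalization relative to \cite{zlotchevski2025schrodinger} is the space-dependence of $V$: however, since the forward and backward PDEs for $q$ quoted in \cref{subsection:impl. of assump.} already allow $V=V(t,x)$, these arguments go through without structural modification.
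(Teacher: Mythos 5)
Your proposal is correct and follows exactly the route the paper itself takes: the paper states this theorem without proof (deferring to \cite[Section 3.2]{zlotchevski2025schrodinger}), but your argument is precisely the Scenario (3) analogue of the explicit computation the paper carries out for Scenario (4) — compute $L^*$ and $L_{\widehat\bP}^*$ by integration by parts against the scenario-specific inner product, identify the active-regime equations with \cref{proposition : harmonic varphi}, the forward PDE for $q$, and \eqref{eq:P-hat forward equation active regime}, and identify the dead-regime equations with \eqref{eqs: phi-hat killed regime} and \eqref{eqn: Scenario 3 killed forward}. All the ingredients you invoke are established in \cref{section: general results} and \cref{section: case-by-case results}, so no gap remains.
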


As for \sce{4} (the case $\psi(t,x)=\c$), it recovers the dynamic Schr\"odinger system derived in \cite{chen2022most}. However, by virtue of the regime-switching approach, we demonstrate below that with a properly chosen inner product, the dynamic Schr\"odinger system in \sce{4} can \textit{also} be expressed in the adjoint-operator form \eqref{eqn: dynamic system using L-star}.
\begin{lem}
Suppose that $\psi(t,x)\equiv\c$ and define
\begin{equation}\label{dfn: Scenario 4 inner product}
    \inn{f,g}= \int_0^T\int_{\R^d} f(t,x,\a)g(t,x,\a)dxdt + \int_0^T f(t,\c,\d)g(t,\c,\d)dt
\end{equation}
for functions $f,g:[0,T]\times\R^d\times\rSet\to\R_+$ such that the above quantity is finite. 
Let $L$ be as in \eqref{dfn: L} and let $f,g$ be any functions of class $C^{1,2}_c$. Then $\inn{Lf,g}=\inn{f,L^*g}$, where $L^*g$ is given by
\begin{equation}\label{L-star}
\begin{aligned}
    L^*g(t,x,\a)&=\nabla \cdot (bg)(t,x,\a) -\frac{1}{2} \sum_{m,n}\frac{\partial^2(a_{mn}g)}{\partial x_m \partial x_n}(t,x,\a) - V(t,x)g(t,x,\a),\\
    L^*g(t,\c,\d)&=\int_{\R^d} V(t,y)g(t,y,\a)dy.
\end{aligned}
\end{equation}
\end{lem}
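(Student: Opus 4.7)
The approach is a direct integration by parts computation that carefully tracks how the cross-regime term induced by $\psi \equiv \c$ gets redistributed when switching from $\inn{Lf,g}$ to $\inn{f,L^*g}$. I would first substitute the definition \eqref{dfn: L} with $\psi(t,x) \equiv \c$ to write $Lf(t,x,\a) = L_0 f(t,x) + V(t,x)(f(t,\c,\d) - f(t,x,\a))$ and $Lf(t,\c,\d) = 0$. Plugging into the inner product \eqref{dfn: Scenario 4 inner product}, the dead-regime integral contributes zero, so $\inn{Lf,g}$ reduces to the active-regime integral, which splits into three pieces: an $L_0$-piece, a $V\, f(t,\c,\d)\, g(\cdot,\a)$-piece, and a $-V f(\cdot,\a)g(\cdot,\a)$-piece.

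Next, I would apply spatial integration by parts to the $L_0$-piece. Since $f,g$ are of class $C^{1,2}_c$, all boundary terms vanish and the standard Fokker--Planck calculation gives $\int_{\R^d} L_0 f(t,x) g(t,x,\a)\, dx = \int_{\R^d} f(t,x,\a) L_0^* g(t,x,\a)\, dx$, with $L_0^*$ as in \eqref{eq:def of L*0 usbp}. The third piece is already paired with $f(\cdot,\a)$ and contributes the $-Vg$ summand to the active-regime formula for $L^*g$. For the middle piece, the key observation is that $f(t,\c,\d)$ is constant in $x$, so it factors out of the inner $dx$-integral:
\[
\int_0^T\!\!\int_{\R^d} V(t,x)\, f(t,\c,\d)\, g(t,x,\a)\, dx\, dt = \int_0^T f(t,\c,\d) \Bigl[\int_{\R^d} V(t,x)\, g(t,x,\a)\, dx\Bigr] dt,
\]
which is exactly the dead-regime part of $\inn{f, L^*g}$ once we define $L^*g(t,\c,\d) := \int_{\R^d} V(t,y)\, g(t,y,\a)\, dy$.

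Assembling these three contributions yields both formulas in \eqref{L-star}. The calculation itself is mechanical; the main conceptual point --- and the only place where the argument is not simply symbol-pushing --- is the asymmetric redistribution of the cross-regime term: the coupling $V(t,x)f(t,\c,\d)g(t,x,\a)$, which naturally sits in the active-regime slot of $\inn{Lf,g}$, gets relocated to the dead-regime slot of $\inn{f,L^*g}$ after the $x$-independent factor $f(t,\c,\d)$ is pulled out. This mirrors the fact that under the forward dynamics $L$ sends active-regime mass to the single dead point $\c$, whereas under the backward dynamics $L^*$ must aggregate information back from all of $\R^d$ through the killing rate $V$; this is precisely why the dead-regime component of $L^*g$ is a spatial integral even though the dead-regime component of the inner product itself is only a time integral.
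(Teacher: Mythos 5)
Your proof is correct and follows essentially the same route as the paper: expand $\inn{Lf,g}$ using \eqref{dfn: L} with $\psi\equiv\c$ (the dead-regime slot contributes nothing since $Lf(\cdot,\c,\d)=0$), pass $L_0$ to $L_0^*$ by integration by parts with vanishing boundary terms for $C^{1,2}_c$ functions, keep the $-Vfg$ term in the active slot, and pull the $x$-independent factor $f(t,\c,\d)$ out of the cross-regime term so that it lands in the dead-regime slot of the inner product. No gaps; the concluding remark about the forward/backward asymmetry is a nice gloss but not needed for the argument.
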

\begin{proof}
For simplicity we suppress the $t$ variable in $f,g$ as well as the $dt$-integral, thus getting
\begin{align*}
    \inn{Lf, g} &= \inn{L_0f(\cdot,\a),g(\cdot,\a)}_{L^2} + \int_{\R^d} V(t,x)(f(\c,\d)-f(x,\a))g(x,\a)dx + \underbrace{Lf(\c,\d)}_{=0}g(\c,\d)\\
    &= \inn{f(\cdot,\a),L_0^*g(\cdot,\a)}_{L^2} - \int_{\R^d}V(t,x)f(x,\a)g(x,\a)dx + f(\c,\d)\int_{\R^d} V(t,x)g(x,\a)dx\\
    &= \inn{f(\cdot,\a),L_0^*g(\cdot,\a)-V(t,\cdot)g(\cdot,\a)}_{L^2}  + f(\c,\d)\int_{\R^d} V(t,x)g(x,\a)dx.
\end{align*} 
By the definition \eqref{dfn: Scenario 4 inner product}, the result follows.
\end{proof}
As an immediate consequence, observing equation \eqref{eqs: phi-hat killed regime}, we obtain the following.
\begin{thm}
    In Scenario (4), where $\psi(t,x)=\c$, the dynamic Schr\"odinger system is given by 
\begin{equation*}%\label{eqn: dynamic system Scenario (4)}
    \begin{dcases}
        \pran{\frac{\partial}{\partial t}+L} \varphi(t,x,\a) = 0 &  \pran{\frac{\partial}{\partial t}+L} \varphi(t,\c,\d) = 0,\\
        \pran{-\frac{\partial}{\partial t}+L^*} \widehat\varphi(t,x,\a) = 0 & \pran{-\frac{\partial}{\partial t}+L^*} \widehat\varphi(t,\c,\d) = 0, \\
        \rho_0(dx,\a) = 
        \varphi(0,x,\a)\widehat\varphi(0,dx,\a) & 
        \rho_0(\c,\d) = \varphi(0,\c,\d)\widehat\varphi(0,\c,\d)=0,\\
        \rho_T(x,\a) = \varphi(T,x,\a)\widehat\varphi(T,x,\a) & 
        \rho_T(\c,\d) = \varphi(T,\c,\d)\widehat\varphi(T,\c,\d).
    \end{dcases}
\end{equation*}  
    where $L^*$, defined in \eqref{L-star}, is the adjoint operator of $L$ with respect to the action $\inn{\cdot,\cdot}$ defined in \eqref{dfn: Scenario 4 inner product}, and the first two lines of equations hold for every $(t,x)\in(0,T)\times\R^d$, the last two lines for Lebesgue-a.e. $x\in\R^d$.
\end{thm}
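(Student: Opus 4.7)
The plan is to assemble the four displayed equations by drawing on pieces that are already in place: \cref{proposition : harmonic varphi} for the $\varphi$ side, the forward PDE for $\widehat\varphi(\cdot,\a)$ noted just before the theorem, the explicit integral formula for $\widehat\varphi^\4(t,\c,\d)$ from \eqref{eqs: phi-hat killed regime}, and the preceding lemma identifying $L^*$. The two ``active regime'' equations require essentially nothing new, while the two ``dead regime'' equations reduce, after using the definition of $L$ in \eqref{dfn: L} and of $L^*$ in \eqref{L-star}, to a single calculation matching a time derivative against a spatial integral.

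First I would handle the $\varphi$ row. For $(t,x)\in(0,T)\times\R^d$, the equation $(\partial_t+L)\varphi(t,x,\a)=0$ is exactly the content of \cref{proposition : harmonic varphi}, specialised to $\psi\equiv\c$. At the dead-regime point $(t,\c,\d)$, the definition \eqref{dfn: L} gives $L\varphi(t,\c,\d)=0$, and \eqref{Definition: varphi-usbp} gives $\varphi(t,\c,\d)=\g(\c,\d)$, which is independent of $t$; hence $(\partial_t+L)\varphi(t,\c,\d)=0$ trivially.

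Next I would deal with the $\widehat\varphi$ row. In the active regime, the paragraph before the theorem already records
\[
\pran{-\frac{\partial}{\partial t}+L_0^*}\widehat\varphi(t,x,\a)-V(t,x)\widehat\varphi(t,x,\a)=0,
\]
and inspecting the first line of \eqref{L-star} shows that this is precisely $(-\partial_t+L^*)\widehat\varphi(t,x,\a)=0$. For the dead-regime equation, I would use the explicit representation $\widehat\varphi^\4(t,\c,\d)=\int_{\R^d}\int_0^t V(r,y)\widehat\varphi(r,y,\a)\,dr\,dy$ from \eqref{eqs: phi-hat killed regime}. Differentiating under the integral sign yields
\[
\frac{\partial}{\partial t}\widehat\varphi^\4(t,\c,\d)=\int_{\R^d}V(t,y)\widehat\varphi(t,y,\a)\,dy,
\]
which is exactly $L^*\widehat\varphi(t,\c,\d)$ by the second line of \eqref{L-star}. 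Thus $(-\partial_t+L^*)\widehat\varphi(t,\c,\d)=0$.

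Finally I would verify the four boundary relations. The identity $\varphi(0,x,\a)\widehat\varphi(0,dx,\a)=\rho_0(dx)$ is \eqref{eqn:varphi*varphi^ at 0}. The relation $\rho_0(\c,\d)=\varphi(0,\c,\d)\widehat\varphi(0,\c,\d)=0$ follows from the convention $\widehat\varphi(\cdot,\d)\equiv0$ at $t=0$ stated right after \eqref{eqs: phi-hat killed regime}, together with the assumption $\Lambda_0=\a$ forcing $\rho_0$ to be supported in the active regime. At the terminal time, $\varphi(T,\cdot)=\g$ by \eqref{Definition: varphi-usbp}, while the active and dead regime formulas in \eqref{eqs: phi-hat killed regime} evaluated at $t=T$ combined with the second and third equations of \eqref{Equation: fg Schrodinger System} yield $\varphi(T,x,\a)\widehat\varphi(T,x,\a)=\rho_T(x,\a)$ and $\varphi(T,\c,\d)\widehat\varphi(T,\c,\d)=\rho_T(\c,\d)$ respectively. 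The only step deserving attention is differentiating under the integral in $\widehat\varphi^\4(t,\c,\d)$; this is justified because Assumption \ref{Assumption C} makes $V$ bounded and Assumption \ref{Assumption B}\textup{(1)} together with the Gaussian-type bounds on $q$ ensure $\widehat\varphi(\cdot,\a)$ is locally integrable. There is no real obstacle beyond this routine regularity check.
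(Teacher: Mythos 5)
Your proof is correct and matches the paper's intent exactly: the paper states this theorem as an ``immediate consequence'' of the preceding lemma and of \eqref{eqs: phi-hat killed regime}, and the pieces you assemble --- \cref{proposition : harmonic varphi} for the harmonicity of $\varphi$, the forward PDE for $\widehat\varphi(\cdot,\a)$ recorded just before the theorem, the differentiation of $\widehat\varphi^\4(t,\c,\d)=\int_{\R^d}\int_0^t V(r,y)\widehat\varphi(r,y,\a)\,dr\,dy$ to match the second line of \eqref{L-star}, and the boundary relations from \eqref{eqn:varphi*varphi^ at 0} and \eqref{Equation: fg Schrodinger System} --- are precisely the intended justification. (One minor caveat: matching the active-regime equation requires reading $L^*g(\cdot,\a)$ as $L_0^*g-Vg$ with $L_0^*$ as in \eqref{eq:def of L*0 usbp}, which is what the lemma's proof actually derives; the signs of the first two terms as displayed in \eqref{L-star} appear to be a typo in the paper rather than a gap in your argument.)
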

\noindent We can see that this dynamic Schr\"odinger system is the analogue of \eqref{eqn: dynamic system using L-star} with the state space in the dead regime being $\set{\c}$ instead of $\R^d$.\\

With this inner product in hand, we can also derive the operator formulation of the Kolmogorov forward equations \eqref{eq:P-hat forward equation active regime} and \eqref{eqn: Scenario 4 killed forward}.

\begin{thm}\label{thm: dynamic system Scenario (4)}
    Let $L_{\widehat\bP}$ be the operator given by \eqref{eqn: bP-hat generator} with $\psi(t,x)=\c$, and let $L_{\widehat\bP}^*$ be the adjoint of $L_{\widehat\bP}$ with respect to the inner product \eqref{dfn: Scenario 4 inner product}. Then in Scenario (4), the active-regime marginal distribution density $\widehat\bP_t(x,\a)$ satisfies
    \begin{equation*}
        \frac{\partial }{\partial t}\widehat\bP_t(x,\a) = L_{\widehat\bP}^*\widehat\bP_t(x,\a)\;\textit{ for }(t,x)\in(0,T)\times\R^d,
    \end{equation*}
    and the dead-regime mass $\widehat\bP_t(\c,\d)$ satisfies 
    \begin{equation*}
        \frac{\partial }{\partial t}\widehat\bP_t(\c,\d) = L_{\widehat\bP}^*\widehat\bP_t(\c,\d)\;\textit{ for }t\in(0,T).
    \end{equation*}
\end{thm}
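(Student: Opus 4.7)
The plan is to prove the theorem by explicitly computing $L_{\widehat\bP}^*$ with respect to the inner product \eqref{dfn: Scenario 4 inner product}, and then matching the resulting expression term-by-term with the already-established Kolmogorov forward equations \eqref{eq:P-hat forward equation active regime} and \eqref{eqn: Scenario 4 killed forward}. Since the generator $L_{\widehat\bP}$ has the same structural form as $L$ in \eqref{dfn: L} (a diffusion operator plus a killing operator, with no action in the dead regime), the adjoint computation can be carried out by adapting the preceding lemma with three modifications: the drift $b$ is replaced by $b+a\nabla\log\varphi$, the killing rate $V(t,x)$ is replaced by $\widehat V(t,x):=\frac{\varphi(t,\c,\d)}{\varphi(t,x,\a)}V(t,x)$, and $\psi(t,x)\equiv \c$.

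More concretely, I would take arbitrary test functions $f,g$ of class $C^{1,2}_c$, split $\langle L_{\widehat\bP}f,g\rangle$ into the contribution from the active-regime part and the dead-regime part, and apply integration by parts to the drift and diffusion terms over $\R^d$. The boundary contributions vanish due to compact support. For the killing term, the computation is
\begin{align*}
\int_0^T\!\!\int_{\R^d}\widehat V(t,x)\bigl(f(t,\c,\d)-f(t,x,\a)\bigr)g(t,x,\a)\,dx\,dt,
\end{align*}
which, upon rearranging and using the inner product \eqref{dfn: Scenario 4 inner product}, redistributes as $-\widehat V(t,x)g(t,x,\a)$ contributing to $L_{\widehat\bP}^*g(t,x,\a)$ and $\int_{\R^d}\widehat V(t,x)g(t,x,\a)\,dx$ contributing to $L_{\widehat\bP}^*g(t,\c,\d)$. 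Together, this yields
\begin{equation*}
\begin{aligned}
L_{\widehat\bP}^*g(t,x,\a)&=\nabla\cdot((b+a\nabla\log\varphi)g)(t,x,\a)-\tfrac12\sum_{m,n}\tfrac{\partial^2(a_{mn}g)}{\partial x_m\partial x_n}(t,x,\a)\\
&\qquad -\tfrac{\varphi(t,\c,\d)}{\varphi(t,x,\a)}V(t,x)g(t,x,\a),\\
L_{\widehat\bP}^*g(t,\c,\d)&=\int_{\R^d}\tfrac{\varphi(t,\c,\d)}{\varphi(t,x,\a)}V(t,x)g(t,x,\a)\,dx.
\end{aligned}
\end{equation*}

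Substituting $g=\widehat\bP_t$ into the active-regime formula immediately recovers the right-hand side of \eqref{eq:P-hat forward equation active regime}, while substituting into the dead-regime formula recovers the right-hand side of \eqref{eqn: Scenario 4 killed forward}. Since both forward equations have already been established in the preceding theorem, this completes the proof.

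The main obstacle I anticipate is purely bookkeeping: one must be careful that the killing term $\widehat V(t,x)(f(t,\c,\d)-f(t,x,\a))g(t,x,\a)$ correctly splits between contributing to $L_{\widehat\bP}^*g(t,x,\a)$ and $L_{\widehat\bP}^*g(t,\c,\d)$, since the factor $f(t,\c,\d)$ multiplies an $\a$-regime quantity yet pairs with $g(t,\c,\d)$ under the inner product \eqref{dfn: Scenario 4 inner product}. No integration-by-parts issues arise (the compact support hypothesis handles them), and the absence of any diffusion term in the dead regime makes the $\d$-adjoint trivial apart from the incoming killing flux. Thus the proof is essentially an application of the previous lemma with $b\leadsto b+a\nabla\log\varphi$ and $V\leadsto\widehat V$.
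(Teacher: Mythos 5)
Your proposal is correct and follows exactly the route the paper intends: the theorem is presented as an immediate consequence of the preceding adjoint lemma (with $b$ replaced by $b+a\nabla\log\varphi$ and $V$ replaced by $\frac{\varphi(t,\c,\d)}{\varphi(t,x,\a)}V$) together with the already-established forward equations \eqref{eq:P-hat forward equation active regime} and \eqref{eqn: Scenario 4 killed forward}, and your computed expressions for $L_{\widehat\bP}^*g$ in both regimes match those term by term. The only cosmetic caveat is that $\widehat\bP_t$ is not itself a $C^{1,2}_c$ test function, so the adjoint is being used as a formal differential/integral expression applied to the density rather than via the duality pairing, but this is exactly how the paper uses it as well.
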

It is expected that no analogue of \eqref{eqn: forward equation marginal density L-star} exists in Scenarios (1) and (2) because for all $t\in[0,T]$, the marginal distribution $\widehat\bP_t(\cdot,\d)$ is entirely prescribed by $\rho_T(\cdot,\d)$ and only varies in $t$ through the indicator function $\one_{t\geq \t}$, as per \eqref{eqn: Scenarios 1,2 killed forward}. \\

\noindent \textbf{Remark.} By the general theory of regime-switching (jump) diffusions, it is known that \cref{thm: dynamic system with L-star} will hold for any choice of $\psi$ such that the map $x\mapsto \psi(t,x)$ is a $C^1$-diffeomorphism of $\R^d$ \cite{zlotchevski2025schrodinger}.
On the other hand, as seen for the case $\psi(t,x)=\c$ in \cref{thm: dynamic system Scenario (4)}, statements of the form \eqref{eqn: dynamic system using L-star} and \eqref{eqn: forward equation marginal density L-star} are not limited to diffeomorphism $\psi$. It is possible to define an appropriate inner product for the specific choice of $\psi$ and to express the dynamic Schr\"odinger system and Kolmogorov forward equation in the operator form. 

\section{Comparison Among Different uSBP Scenarios and Their Solutions}\label{Subsection: Comparison}
In this section we discuss the relations among the four Scenarios presented in the previous sections. For the sake of clarity, in this section we use the superscript ``$^{(k)}$'', for $k\in\set{1,2,3,4}$, in all the relevant quantities to indicate the Scenario to which they correspond. Again, all the discussions are conducted under Assumptions \ref{Assumption A}, \ref{Assumption B}, and \ref{Assumption C}. We will also assume that the reference measure $\bR$ is the same in all four Scenarios, in the sense that the initial distribution $\bR_0(\cdot,\a)$, the drift and diffusion coefficients $b(t,x),\sigma(t,x)$ in the regime $\a$, as well as the killing rate $V(t,x)$, are identical from Scenario to Scenario. 

Recall that these four Scenarios represent different levels of information regarding the killed particles up to time $T$. This naturally leads to a comparison question in an environment where two or more observers have access to different levels of information. For example, suppose that Observer 1 has access to the full joint distribution of the killing locations and the killing times $\rho^\1_T(\db{x,\t},\d)$ as in Scenario (1), and Observer 2 can only see the temporal distribution $\rho^\2_T(\db{\c,\t},\d)$ as in Scenario (2), with $\rho^\2_T(\db{\c,\t},\d)= \int_{\R^d}\rho^\1_T(\db{x,\t},\d)dx$ for every $\t\in(0,T]$. What is the difference between their respective solutions $\widehat\bP^\1$ and $\widehat\bP^\2$, obtained using the uSBP models described in the previous section? Will Observer 2 be able to use the uSBP setup in Scenario (1)?

Let us first present an informal, intuitive argument to motivate our results. The uSBP in \sce{2} imposes less of a constraint on the terminal distribution than that in \sce{1}, and therefore we would expect the relation $\KL{\widehat\bP^\2}{\bR^\2}\leq \KL{\widehat\bP^\1}{\bR^\1}$. In fact, finding $\widehat\bP^\2$ in \sce{2} can be viewed as a ``SBP within a SBP'': every joint distribution of the killing locations and the killing times imposed through the constraint $\rho_T^\1(\db{x,\t},\d)$ yields a path measure $\widehat\bP^\1$, and we must choose the one minimizing KL divergence while having $ \int_{\R^d}\rho^\1_T(\db{x,\t},\d)dx=\rho^\2_T(\db{\c,\t},\d)$ for every $\t\in(0,T]$. Meanwhile, even though the terminal distribution $\widehat\bP_T^\2(\cdot,\d)$ only concerns the killing times, there exists an ``unobserved'' joint distribution of killing locations and killing times\footnote[4]{It can be explicitly computed using the transition density \eqref{eqn: p-hat transition density} and the initial distribution $\bR_0$.} corresponding to $\widehat\bP^\2$. If we were then to choose $\rho^\1_T(\cdot,\d)$ to be precisely this joint distribution, it seems reasonable that the Schr\"odinger bridge $\widehat\bP^\1$ would be in some sense ``identical'' to $\widehat\bP^\2$. We make this discussion precise in the following subsection. 
\subsection{KL Divergence Comparison Between Scenario (1) and Scenario (2)}
To begin, let us recall the concept of \textit{pushforward measure} and its properties, which will be used extensively in our comparison of different uSBP scenarios.
\begin{dfn}
Let $Y,Z$ be Polish spaces. Let $\mu$ be a probability measure on $(Y,\cB_Y)$ and let $f:Y\to Z$ be a measurable function. The pushforward of $\mu$ by $f$, denoted by $f_{\#}\mu$, is the measure defined by, for $B\in\cB_Z$,
\[
    f_{\#}\mu(B)= \mu(f^{-1}(B)).
\]
\end{dfn}
If $\mu,\nu$ are two probability measures on $(Y,\cB_Y)$, then we also have that
\begin{equation}\label{eqn: pushforward density}
\frac{df_{\#}\mu}{df_{\#}\nu} (z)= \bE_{\nu}\brac{\frac{d\mu}{d\nu} \bigg| f=z},
\end{equation}
and
\begin{equation}\label{eqn: KL of pushforward inequality}
    \KL{\mu}{\nu} \geq \KL{f_{\#}\mu}{f_{\#}\nu}.
\end{equation}
The first statement is a general fact, and the latter is a consequence of the KL divergence additive property (\cite{leonard2014survey} - formula A.8). The Skorokhod space (equipped with the Skorokhod metric) is indeed a Polish space, and thus the properties above apply to the path measures we consider.\\

We provide a detailed analysis of the comparison between Scenario (1) and Scenario (2). Other comparisons will follow by similar arguments. We write $\Omega^\1 :=\Omega(\psi^\1)$ and $\Omega^\2 :=\Omega(\psi^\2)$ for the path spaces in Scenarios (1) and (2) respectively, as defined in \cref{dfn:omega}, where a sample path is of the form $\omega(t)=(\db{X_t,\t_t},\Lambda_t)$, for $t\in[0,T]$. To be rigorous, we will also include the trivial augmented coordinate in the active regime. Without loss of generality, we assume $\t\equiv0$ in the active regime, writing $(\db{X_t,\t_t},\Lambda_t)=(\aug{x},\a)$. 

Define the map $\Phi : \Omega^\1 \to \Omega^\2 $ as follows:
\begin{equation}\label{dfn:Phi}
\Phi(\omega)(t)=
\begin{dcases}
(\db{X_t,\t_t},\Lambda_t) & \text {if }\Lambda_t=\a,   \\  
(\db{\c,\t_t},\Lambda_t) & \text {if }\Lambda_t=\d.   
\end{dcases} 
\end{equation}
Intuitively, $\Phi$ maps a path from Scenario (1) to its corresponding path in Scenario (2). By definition of $\psi^\1$ and $\psi^\2$, $\Phi$ is a bijection. 
To describe its inverse map, let $\omega$ be a sample path in $\Omega^\2$: if $\Lambda_T(\omega)=\a$, i.e., the path survives up to time $T$, then $\Phi^{-1}(\omega)=\omega$; if  $\Lambda_T(\omega)=\d$, in which case the killing time $\kt(\omega)=\inf\set{t>0:\Lambda_t(\omega)=\d}\leq T$, then
\begin{equation*}%\label{dfn:Phi-inverse}
\Phi^{-1}(\omega)(t)=
\begin{dcases}
(\db{X_t,\t_t},\Lambda_t) & \text {if }\Lambda_t=\a\;\text{ i.e., }0\leq t<\kt(\omega),   \\  
(\db{X_{\kt-},\t_t},\Lambda_t) & \text {if }\Lambda_t=\d\;\text{ i.e., } \kt(\omega)\leq t\leq T.   
\end{dcases} 
\end{equation*}
Since $\kt$ is a stopping time, both $\Phi$ and $\Phi^{-1}$ are measurable mappings. 

We now define the reference measures and target distributions in our comparison. Suppose that $\set{(\db{X_t,\t_t},\Lambda_t)}$ under $\bR^\1$, in the active regime, solves the reference SDE \eqref{SDE for Xt} in Scenario (1) with coefficients $b^\1,\sigma^\1,V^\1$ and initial data $\bR_0^\1(\cdot,\a)$. Then clearly, $\set{(\db{X_t,\t_t},\Lambda_t) }$ under $\bR^\2:=\Phi_{\#}\bR^\1$ also solves \eqref{SDE for Xt} in Scenario (2) with the same coefficients and initial data, i.e., $b^\2=b^\1,\sigma^\2=\sigma^\1,V^\2=V^\1$, and $\bR_0^\2(\cdot,\a)=\bR_0^\1(\cdot,\a)$.
\begin{lem}\label{lem:KL of pushforward by Phi}
    For any path measure $\bP\in\cP(\Omega)$ such that $\bP\ll\bR^\1$,
    \[
    \KL{\bP}{\bR^\1}= \KL{\Phi_{\#}\bP}{\Phi_{\#}\bR^\1}= \KL{\Phi_{\#}\bP}{\bR^\2}.
    \]
\end{lem}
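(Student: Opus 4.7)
The plan is to first note that the second equality $\KL{\Phi_{\#}\bP}{\Phi_{\#}\bR^\1} = \KL{\Phi_{\#}\bP}{\bR^\2}$ is immediate from the very definition $\bR^\2 := \Phi_{\#}\bR^\1$, so the substance of the lemma lies in the first equality. The strategy is to exploit the fact, already established in the excerpt, that $\Phi$ is a measurable bijection with measurable inverse $\Phi^{-1}$ (measurability of $\Phi^{-1}$ follows from $\kt$ being a stopping time). Since a bijective measurable map with measurable inverse loses no information, one expects KL divergences to be preserved, not merely decreased.

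The cleanest route is a two-sided application of the general contraction inequality \eqref{eqn: KL of pushforward inequality}. In one direction, \eqref{eqn: KL of pushforward inequality} applied with $f = \Phi$ yields
\[
\KL{\bP}{\bR^\1} \;\geq\; \KL{\Phi_{\#}\bP}{\Phi_{\#}\bR^\1}.
\]
In the other direction, applying \eqref{eqn: KL of pushforward inequality} with $f = \Phi^{-1}$ to the pair $\Phi_{\#}\bP, \Phi_{\#}\bR^\1$ gives
\[
\KL{\Phi_{\#}\bP}{\Phi_{\#}\bR^\1} \;\geq\; \KL{(\Phi^{-1})_{\#}\Phi_{\#}\bP}{(\Phi^{-1})_{\#}\Phi_{\#}\bR^\1}.
\]
Because $\Phi^{-1}\circ\Phi$ is the identity on $\Omega^\1$, the pushforwards on the right collapse to $\bP$ and $\bR^\1$, and chaining the two inequalities forces equality throughout.

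Alternatively, and more explicitly, one can exhibit the Radon–Nikodym derivative directly: if $Z := d\bP/d\bR^\1$, then bijectivity of $\Phi$ makes the conditional expectation in \eqref{eqn: pushforward density} trivial (the conditioning $\Phi = \omega'$ picks out a single point), giving $d(\Phi_{\#}\bP)/d(\Phi_{\#}\bR^\1) = Z\circ\Phi^{-1}$. A change-of-variables then yields
\[
\KL{\Phi_{\#}\bP}{\Phi_{\#}\bR^\1} = \int_{\Omega^\2} \log\bigl(Z\circ\Phi^{-1}\bigr)(\omega')\,d(\Phi_{\#}\bP)(\omega') = \int_{\Omega^\1}\log Z(\omega)\,d\bP(\omega) = \KL{\bP}{\bR^\1}.
\]

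I do not anticipate a serious obstacle: the only thing to watch is measurability of $\Phi^{-1}$, but this is already addressed in the text via the stopping-time property of $\kt$. The proof should be only a few lines; the real conceptual content is simply the observation that the two Scenarios are related by a measurable isomorphism of path spaces, under which the reference measures correspond by construction.
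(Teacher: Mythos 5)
Your proposal is correct and its main argument — applying the contraction inequality \eqref{eqn: KL of pushforward inequality} once with $\Phi$ and once with $\Phi^{-1}$, then chaining the two inequalities — is exactly the paper's proof; the second equality is, as you note, immediate from $\bR^\2:=\Phi_{\#}\bR^\1$. The alternative Radon--Nikodym computation you sketch is a valid but unnecessary addition.
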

\begin{proof}
    Applying \eqref{eqn: KL of pushforward inequality} immediately gives
    \[
    \KL{\bP}{\bR^\1} \geq \KL{\Phi_{\#}\bP}{\Phi_{\#}\bR^\1}.
    \]
    Moreover, $\Phi$ is a bijection, and thus $\bP=\Phi^{-1}_{\#}(\Phi_{\#}\bP)$. Therefore applying \eqref{eqn: KL of pushforward inequality} again,
    \[
    \KL{\Phi_{\#}\bP}{\Phi_{\#}\bR^\1} \geq \KL{\Phi^{-1}_{\#}(\Phi_{\#}\bP)}{\Phi^{-1}_{\#}(\Phi_{\#}\bR^\1)}=\KL{\bP}{\bR^\1}.
    \]
    
\end{proof}
\begin{thm}\label{Thm: KL divergence comparison} 
    Consider the uSBPs for $\bR^\1$ and $\bR^\2$ as above with target distributions that satisfy $\rho_0^\1=\rho_0^\2$, $\rho_T^\1(\cdot,\a)=\rho_T^\2(\cdot, \a)$  and $\int_{\R^d}\rho_T^\1(\db{x,\t},\d)dx=\rho_T^\2(\db{\c,\t},\d)$ for all $\t\in[0,T]$. 
    Suppose that their respective solutions $\widehat\bP^\1$ and $\widehat\bP^\2$ exist and satisfy $$\widehat\bP^{(k)}=\f^{(k)}(\db{X_0,\t_0},\Lambda_0)\g^{(k)}(\db{X_T,\t_T},\Lambda_T)\bR^{(k)},\;\textit{ for }k=1,2,$$ 
    where $(\f^\1,\g^\1)$ and $(\f^\2,\g^\2)$ are the unique solutions of their respective static Schr\"odinger systems.
    Then
    \[
    \KL{\widehat\bP^\1}{\bR^\1} \geq \KL{\widehat\bP^\2}{\bR^\2},
    \]
    and the following are equivalent:
    \begin{enumerate}[label=\textup{(\roman*)}]
        \item $\KL{\widehat\bP^\1}{\bR^\1} = \KL{\widehat\bP^\2}{\bR^\2}$.
        \item $\Phi_{\#}\widehat\bP^\1=\widehat\bP^\2$, where $\Phi$ is defined as in \eqref{dfn:Phi}.
        \item For $\bR^\1_T(\cdot,\d)$-a.e. $(\db{x,\t})$, $\g^\1(\db{x,\t},\d)$ does not depend on $x$.
        \item For $\bR^\1_T(\cdot,\d)$-a.e. $(\db{x,\t})$, the target distribution in regime $\d$ satisfies
       \begin{equation}\label{eqn:rho1 for equality}
           \begin{aligned}
               \frac{d\rho^\1_T}{d\bR^\1_T}(\db{x,\t},\d)&=
        \g^\2(\db{\c,\t},\d)\\
        &\hspace{1cm}\cdot\Esub{\bR^\1}{\f^\2(\db{X_0,\t_0},\Lambda_0)\bigg|(\db{X_T,\t_t},\Lambda_T)=(\db{x,\t},\d)}.
           \end{aligned} 
        \end{equation}
    \end{enumerate}
    If any of \textup{(i)}, \textup{(ii)}, \textup{(iii)}, \textup{(iv)} holds, 
    then the Schr\"odinger bridge SDE and the killing rate as in \cref{thm: P-hat SDE and generator} are identical for $\widehat\bP^\1$ and $\widehat\bP^\2$.
\end{thm}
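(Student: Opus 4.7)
The plan is to exploit the bijectivity of $\Phi$ together with \cref{lem:KL of pushforward by Phi} and the uniqueness of the Schr\"odinger bridge. For the inequality I would first verify that $\Phi_{\#}\widehat\bP^\1$ is admissible for the uSBP in \sce{2}: $\Phi$ acts as the identity at time $0$ and on paths surviving to $T$, so $(\Phi_{\#}\widehat\bP^\1)_0 = \rho_0^\1 = \rho_0^\2$ and $(\Phi_{\#}\widehat\bP^\1)_T(\cdot,\a) = \rho_T^\1(\cdot,\a) = \rho_T^\2(\cdot,\a)$. On killed paths $\Phi$ only replaces the post-killing position by $\c$, so the $\t$-marginal of the dead-regime terminal distribution is preserved: $(\Phi_{\#}\widehat\bP^\1)_T(\db{\c,\t},\d) = \int_{\R^d}\rho_T^\1(\db{x,\t},\d)\,dx = \rho_T^\2(\db{\c,\t},\d)$ by hypothesis. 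Combining admissibility with \cref{lem:KL of pushforward by Phi} and the optimality of $\widehat\bP^\2$ yields
\[
\KL{\widehat\bP^\1}{\bR^\1} = \KL{\Phi_{\#}\widehat\bP^\1}{\bR^\2} \geq \KL{\widehat\bP^\2}{\bR^\2}.
\]

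The equivalence (i) $\Leftrightarrow$ (ii) then follows immediately from uniqueness of the Schr\"odinger bridge in \sce{2}: equality in (i) forces the admissible measure $\Phi_{\#}\widehat\bP^\1$ to coincide with the minimizer $\widehat\bP^\2$, while (ii) combined with \cref{lem:KL of pushforward by Phi} gives (i). For (ii) $\Leftrightarrow$ (iii), the bijectivity of $\Phi$ gives $\frac{d\Phi_{\#}\widehat\bP^\1}{d\bR^\2}(\omega)=\frac{d\widehat\bP^\1}{d\bR^\1}(\Phi^{-1}\omega)$, which by \eqref{eqn:bP^ as prod of fg} equals $\f^\1(\aug{X_0},\a)\g^\1(\aug{X_T},\a)$ on surviving paths and $\f^\1(\aug{X_0},\a)\g^\1(\db{X_{\kt-},\kt},\d)$ on killed paths (the $X_{\kt-}$ coordinate being preserved by $\Phi^{-1}$ since the active trajectory is unchanged). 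Matching this against the product form of $\widehat\bP^\2$, which on killed paths reads $\f^\2(\aug{X_0},\a)\g^\2(\db{\c,\kt},\d)$, forces (up to the scaling ambiguity $(\f,\g)\mapsto(c\f,\g/c)$) $\f^\1 = \f^\2$, $\g^\1(\cdot,\a)=\g^\2(\cdot,\a)$, and $\g^\1(\db{x,\t},\d) = \g^\2(\db{\c,\t},\d)$, which is precisely independence of the last expression from $x$. Conversely, under (iii) I would verify that the triple obtained from $(\f^\1,\g^\1)$ by reinterpreting $\g^\1(\db{x,\t},\d)$ as $\tilde\g(\db{\c,\t},\d)$ solves the \sce{2} static Schr\"odinger system (the first two equations reduce to their \sce{1} counterparts since $\bR_0^\1=\bR_0^\2$ and the active-regime transition densities agree, while the third follows from the just-described substitution); uniqueness then identifies this triple with $(\f^\2,\g^\2)$ and gives $\Phi_{\#}\widehat\bP^\1 = \widehat\bP^\2$. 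Finally, (iii) $\Leftrightarrow$ (iv) is obtained by substituting the relations $\f^\1 = \f^\2$ and $\g^\1(\db{x,\t},\d) = \g^\2(\db{\c,\t},\d)$ into the third static equation \eqref{eqn: Scenario (1) third equation Schrodinger system} to recognize \eqref{eqn:rho1 for equality}, and conversely by using \eqref{eqn:rho1 for equality} to construct a solution of the \sce{1} Schr\"odinger system satisfying (iii) and invoking uniqueness.

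For the closing statement on the SDE and killing rate, I would invoke \cref{thm: P-hat SDE and generator}, which shows that both are fully determined by $\varphi$. Under (iii), the integral representation \eqref{Definition: varphi-usbp} immediately gives $\varphi^\1(t,x,\a) = \varphi^\2(t,x,\a)$ on $[0,T]\times\R^d$ (the two defining integrals coincide under $\g^\1(\aug{z},\a)=\g^\2(\aug{z},\a)$ and $\g^\1(\db{z,r},\d) = \g^\2(\db{\c,r},\d)$), and $\varphi^\1(t,\db{x,\t},\d) = \g^\1(\db{x,\t},\d) = \g^\2(\db{\c,\t},\d) = \varphi^\2(t,\db{\c,\t},\d)$, so the drift correction $a\nabla\log\varphi$ and the modified killing rate $\frac{\varphi(t,\psi(t,x),\d)}{\varphi(t,x,\a)}V(t,x)$ agree between the two Scenarios. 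The main obstacle I anticipate is careful bookkeeping of the multiplicative constants: since $(\f,\g)$ is determined only up to the rescaling $(\f,\g)\mapsto(c\f,\g/c)$, the identifications in step (iii) must be read ``up to constant,'' and one needs to verify that a single constant can be chosen uniformly across the active and dead regimes without disturbing either the Radon-Nikodym comparison or the passage between the two static Schr\"odinger systems.
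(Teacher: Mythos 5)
Your proposal is correct and follows essentially the same route as the paper's proof: the KL-invariance of the bijection $\Phi$ plus admissibility and optimality give the inequality and (i)$\Leftrightarrow$(ii), the pushforward Radon--Nikodym identity and the uniqueness of the static Schr\"odinger system give (ii)$\Leftrightarrow$(iii)$\Leftrightarrow$(iv), and the identification $\varphi^\1=c\,\varphi^\2$ yields the final dynamical statement. The scaling issue you flag is handled in the paper exactly as you anticipate, by carrying a single multiplicative constant $c$ with $\g^\1=c\,\g^\2$, which cancels in $\nabla\log\varphi$ and in the ratio $\varphi(t,\psi(t,x),\d)/\varphi(t,x,\a)$.
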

\noindent \textbf{Remark. }Before we present the proof, let us comment on the conditions in the theorem. For both $k=1,2$, under Assumption \ref{Assumption A}, the Schr\"odinger bridge $\widehat\bP^{(k)}$ is guaranteed to exist. If further the transition density $q$ as in \eqref{eqn: Definition of q} exists and is everywhere positive, then by the general SBP theory \cite[Theorem 2.9]{leonard2014survey}, the product-shaped form of $\widehat\bP^{(k)}$ will hold, with $\f^{(k)}, \g^{(k)}$ being unique up to rescaling. The additional $C^{1,2}$-regularity endowed by Assumption \ref{Assumption C} is not needed. In fact, this comparison result is entirely about the \textit{static} perspective of the uSBP and does not even depend on the specific underlying SDE.

\begin{proof}
By \cref{lem:KL of pushforward by Phi},
\[
\KL{\widehat\bP^\1}{\bR^\1}=\KL{\Phi_{\#}\widehat\bP^\1}{\Phi_{\#}\bR^\1}=\KL{\Phi_{\#}\widehat\bP^\1}{\bR^\2},
\] 
and given the choice of target distributions $\rho^\1_0,\rho^\1_T,\rho^\2_0,\rho^\2_T$, 
\[
\KL{\Phi_{\#}\widehat\bP^\1}{\bR^\2}\geq\KL{\widehat\bP^\2}{\bR^\2}
\]
because $\Phi_{\#}\widehat\bP^\1$ is admissible in Scenario (2) but not necessarily optimal. Thus, $$\KL{\widehat\bP^\1}{\bR^\1}\geq\KL{\widehat\bP^\2}{\bR^\2},$$
with equality if and only if $\Phi_{\#}\widehat\bP^\1=\widehat\bP^\2$. \\

It remains to show the equivalence to statements (iii) and (iv).  
Let us first establish some general facts. Since $\widehat\bP^\1$ is the Schr\"odinger bridge, we know that $$\frac{d\widehat\bP^\1}{d\bR^\1}(\omega)=\f^\1(\omega(0))\g^\1(\omega(T))\;\textup{ for }\omega\in\Omega^{(1)}.$$
Moreover, by \eqref{eqn: pushforward density}, 
\[
\frac{d\Phi_{\#}\widehat\bP^\1}{d\Phi_{\#}\bR^\1} (z)= \bE_{\bR^\1}\brac{\frac{d\widehat\bP^\1}{d\bR^\1} \bigg| \Phi=z}.
\]
Let $z\in\Omega^\2$ be arbitrary, and $\tilde{\omega}:=\Phi^{-1}(z)\in\Omega^{(1)}$. Then
\begin{equation}\label{eqn:dPhi-P^1/dPhi-R^1}
    \frac{d\Phi_{\#}\widehat\bP^\1}{d\Phi_{\#}\bR^\1} (z) = \f^\1(\tilde{\omega}(0))\g^\1(\tilde{\omega}(T)).
\end{equation}
Moreover, $\tilde{\omega}(0)=z(0)$, and if $\tilde{\omega}(T)=(\db{x,\t},\d)$, then $z(T)=(\db{\c,\t},\d)$.\\

We now show that (ii)$\implies$(iii). Suppose that (ii) holds, meaning $\Phi_{\#}\widehat\bP^\1=\widehat\bP^\2$. Then for $\bR^\2$-a.e. path $z\in\Omega^\2$,
\[
\frac{d\Phi_{\#}\widehat\bP^\1}{d\Phi_{\#}\bR^\1} (z)= \frac{d\widehat\bP^\2}{d\bR^\2}(z)=\f^\2(z(0))\g^\2(z(T)),
\]
where $(\f^\2,\g^\2)$ solve the Schr\"odinger system in Scenario (2). Thus for $\bR^\2$-a.e. path $z\in\Omega^\2$,
\begin{equation*}
    \f^\2(z(0))\g^\2(z(T))=\f^\1(\tilde{\omega}(0))\g^\1(\tilde{\omega}(T)),
\end{equation*}
or equivalently, for $\bR^\1$-a.e. path $\omega\in\Omega^\1$,
\begin{equation*}%\label{eqn: f1g1 and f2g2 relationship}
    \f^\1(\omega(0))\g^\1(\omega(T))=\f^\2(\Phi(\omega)(0))\g^\2(\Phi(\omega)(T)).
\end{equation*}
Recall that $\Phi(\omega)(0)=\omega(0)$ and if $\omega(T)=(\db{x,\t},\d)$, then $\Phi(\omega)(T)=(\db{\c,\t},\d)$. Thus for $\bR^\1_T(\cdot,\d)$-a.e. $\db{x,\t}$, $\g^\1(\db{x,\t},\d)$ cannot depend on $x$. 
\\

We complete the proof by showing that (iii) implies (ii) and that (iii), (iv) are equivalent. Suppose that (iii) holds, i.e. $\g^\1(\db{x,\t},\d)$ does not depend on $x$. Again, take $z\in\Omega^\2$ to be arbitrary and set $\tilde\omega:=\Phi^{-1}(z)$. Observe that, since $\tilde\omega(0)=z(0)$ and $\g^\1(\tilde{\omega}(T))=\g^\1(z(T))$ (although $\tilde{\omega}(T)\neq z(T)$),  $$\f^\1(\tilde{\omega}(0))\g^\1(\tilde{\omega}(T)) = \f^\1(z(0))\g^\1(z(T)).$$ 
Hence, by \eqref{eqn:dPhi-P^1/dPhi-R^1}, for $\bR^\2$-a.e. path $z\in\Omega^\2$,
\[
\frac{d\Phi_{\#}\widehat\bP^\1}{d\bR^\2} (z) = \f^\1(\tilde{\omega}(0))\g^\1(\tilde{\omega}(T)) = \f^\1(z(0))\g^\1(z(T)).
\]
Consequently, if we can show that $(\f^\1,\g^\1)$ solves the static Schr\"odinger system \eqref{Equation: fg Schrodinger System} for $(\bR^\2,\rho_0^\2,\rho_T^\2)$, then by uniqueness of the solution, we must have $\Phi_{\#}\widehat\bP^\1 = \widehat\bP^\2$. To this end, observe that the first two equations of \eqref{Equation: fg Schrodinger System} trivially hold for $(\f^\1,\g^\1)$ because they are identical in all Scenarios. Thus, we only need to verify that $(\f^\1,\g^\1)$ satisfies the third equation in \eqref{Equation: fg Schrodinger System}, which becomes \eqref{eqn: Scenario (2) third equation Schrodinger system} in Scenario (2). Indeed, we derive as follows:
\begin{align*}
    &\g^\1(\db{\c,\t},\d)\bE_{\bR^\2}\brac{\f^\1(\db{X_0,\t_0},\Lambda_0)\bigg|(\db{X_T,\t_T},\Lambda_T)=(\db{\c,\t},\d)}\\
    &\quad=\g^\1(\db{\c,\t},\d)\Esub{\bR^\1}{\f^\1(\db{X_0,\t_0},\Lambda_0)\bigg|\t_T=\t,\Lambda_T=\d}\\
    &\quad=\frac{\g^\1(\db{\c,\t},\d)}{\int_{\R^d}\bR^\1_T(\db{x,\t},\d)dx} \int_{\R^d}\Esub{\bR^\1}{\f^\1(\db{X_0,\t_0},\Lambda_0)\bigg|(\db{X_T,\t_T},\Lambda_T)=(\db{x,\t},\d)}\\
    &\hspace{10.5cm}\cdot\bR^\1_T(\db{x,\t},\d)dx\\
    &\quad=\frac{\int_{\R^d}\rho^\1_T(\db{x,\t},\d)dx}{\int_{\R^d}\bR^\1_T(\db{x,\t},\d)dx} \text{\; (because $(\f^\1,\g^\1)$ solves \eqref{eqn: Scenario (1) third equation Schrodinger system} and $\g^\1$ is independent of $x$)}\\
    &\quad=\frac{\rho^\2_T(\db{\c,\t},\d)}{\bR_T^\2(\db{\c,\t},\d)}=\frac{d\rho^\2_T}{d\bR_T^\2}(\db{\c,\t},\d).
\end{align*}
We have confirmed that $(\f^\1,\g^\1)$ satisfies \eqref{eqn: Scenario (2) third equation Schrodinger system}, and thus it solves the static Schr\"odinger system \eqref{Equation: fg Schrodinger System} in Scenario (2). By the uniqueness of the Schr\"odinger system solution, we conclude that (ii) holds, i.e. $\Phi_{\#}\widehat\bP^\1 = \widehat\bP^\2$. 
This also indicates that for $\bR^\1$-a.e.,
\[
\f^\1(\db{X_0,\t_0},\Lambda_0)\g^\1(\db{X_T,\t_T},\Lambda_T)= \f^\2(\db{X_0,\t_0},\Lambda_0)\g^\2(\db{X_T,\t_T},\Lambda_T)
\]
Thus, Equation \eqref{eqn:rho1 for equality} becomes \eqref{eqn: Scenario (1) third equation Schrodinger system}, which is the third equation of the static Schr\"odinger system for $\bR^\1$, establishing (iii)$\implies$(iv). The reverse direction follows trivially: if (iv) holds, then \eqref{eqn:rho1 for equality} implies that $(\f^\2,\g^\2)$ solves the Schr\"odinger system for $(\bR^\1,\rho_0^\1,\rho_T^\1)$, and thus by the uniqueness of the solution, $\g^\1=c\, \g^\2$ for some $c\in\R_+$, meaning $\g^\1(\db{x,\t},\d)$ cannot depend on $x$.\\

Lastly, we discuss the dynamic aspect. 
Suppose that any of (i)-(iv) holds. Then, in particular, $\g^\1=c\, \g^\2$ for some $c\in\R_+$, and thus $\varphi^\1 = c\, \varphi^\2$ by \eqref{Definition: varphi-usbp} because $q$ only depends on the coefficients $b,\sigma,V$. This in turn yields 
\[
\nabla \log \varphi^\1(\cdot,\a) = \nabla \log \varphi^\2(\cdot,\a) \text{\; and \;} \frac{\varphi^\1(\cdot,\d)}{\varphi^\1(\cdot,\a)}=\frac{\varphi^\2(\cdot,\d)}{\varphi^\2(\cdot,\a)}
\]
As a result, by \cref{thm: P-hat SDE and generator}, both the SDE in the active regime and the killing rate under $\widehat\bP^\1$ and $\widehat\bP^\2$ are identical, and their respective generators $L_{\widehat\bP^\1}$ and $L_{\widehat\bP^\2}$ have the same coefficients (other than $\psi$).
\end{proof}
\noindent \textbf{Remark.} We will finish this subsection with a remark concerning the inequality $$\KL{\Phi_{\#}\widehat\bP^\1}{\bR^\2}\geq\KL{\widehat\bP^\2}{\bR^\2}$$ used in the proof above. In fact, by \eqref{eqn: KL of pushforward inequality}, it can be further decomposed into 
\begin{equation}\label{eqn: KL two-step inequality}
    \KL{\Phi_{\#}\widehat\bP^\1}{\bR^\2}\geq  \KL{\Phi_{\#}\widehat\bP^\1_{0T}}{\bR^\2_{0T}}  \geq \KL{\widehat\bP^\2_{0T}}{\bR^\2_{0T}} = \KL{\widehat\bP^\2}{\bR^\2},
\end{equation}
where, in the notation above, the two-endpoint marginal is applied after the pushforward by $\Phi$. The measure $\widehat\bP^\2_{0T}$, called the \textit{static Schr\"odinger bridge}, solves the \textit{static SBP}:
\begin{equation*}%\label{eqn: static sbp (comparison)}
    \widehat\bP^\2_{0T} := \arg\min\set{\KL{\bP_{0T}}{\bR^\2_{0T}} \text{ such that }  \bP_0=\rho_0,\bP_T=\rho_T}.
\end{equation*}
\cref{Thm: KL divergence comparison} establishes conditions under which equality is achieved everywhere throughout \eqref{eqn: KL two-step inequality}.
On the other hand, \eqref{eqn: KL two-step inequality} indicates that, if $\Phi_{\#}\widehat\bP^\1 \neq \widehat\bP^\2$, then at least one of the following is true: 1) the measure $\Phi_{\#}\widehat\bP^\1_{0T}$ is not the optimizer of the static SBP for $\bR^\2$, or 2) the ``bridge'' component of $\Phi_{\#}\widehat\bP^\1$, i.e., the conditional distribution of $\Phi_{\#}\widehat\bP^\1$ conditioning on the endpoints $(\omega(0),\omega(T))$, does not match that of $\bR^\2$.\\

\subsection{The Case of Deterministic Initial Data} Let us now discuss an important special case: the Dirac delta initial data, or, in other words, the \textit{deterministic} initial data. In fact, when $\bR_0=\delta_{x_0}$ for some $x_0\in\R^d$, the static Schr\"odinger system \eqref{Equation: fg Schrodinger System} always admits the unique (up to rescaling) solution $\f\equiv 1,\g=d\rho_T/d\bR_T$, and thus \cref{Thm: KL divergence comparison} takes a simpler form.
\begin{cor}
    Let $\bR^\1$ be a reference measure with $\bR_0^\1=\delta_{x_0}$ for some $x_0\in\R^d$, and suppose $\rho^\1_0=\bR^\1_0$ and $\KL{\rho_T^\1}{\bR_T^\1}<\infty$. Take $\bR^\2,\rho^\2_0,\rho^\2_T$ to be of the same relation to $\bR^\1,\rho^\1_0,\rho^\1_T$ as in the hypothesis of \cref{Thm: KL divergence comparison}. Then, the Schr\"odinger bridges $\widehat\bP^\1$ and $\widehat\bP^\2$ exist and take the form $$\widehat\bP^{(k)}=\g^{(k)}(\db{X_T,\t_T},\Lambda_T)\bR^{(k)}\textit{ for }k=1,2,$$
    and the following are equivalent:
    \begin{enumerate}[label=\textup{(\roman*)}]
        \item $\KL{\widehat\bP^\1}{\bR^\1} = \KL{\widehat\bP^\2}{\bR^\2}$.
        \item $\g^\1=\g^\2$.
        \item For $\bR^\1_T$-a.e. $\db{x,\t}$, the target distribution in the regime $\d$ satisfies
        \[
        \frac{d\rho^\1_T}{d\bR^\1_T}(\db{x,\t},\d)=\frac{d\rho_T^\2}{d\bR^\2_T}(\db{\c,\t},\d).
        \] 
    \end{enumerate}
\end{cor}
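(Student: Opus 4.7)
The plan is to reduce the corollary to \cref{Thm: KL divergence comparison} by first establishing the explicit product form of the Schr\"odinger bridges, then matching each of the corollary's conditions with those of the theorem. The first step is to verify that, under $\bR_0^{(k)} = \delta_{x_0} = \rho_0^{(k)}$, the pair $(\f^{(k)}, \g^{(k)}) = (1, d\rho_T^{(k)}/d\bR_T^{(k)})$ solves the static Schr\"odinger system \eqref{Equation: fg Schrodinger System}. Indeed, substituting $\f \equiv 1$ into the second and third equations directly gives $\g = d\rho_T/d\bR_T$, while the first equation reduces to $\Esub{\bR}{\g(X_T, \Lambda_T) \mid (X_0, \Lambda_0) = (x_0, \a)} = 1$, which is automatic since $\bR_0 = \delta_{x_0}$ makes the conditioning trivial and $\int \g \, d\bR_T = \int d\rho_T = 1$. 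Uniqueness of the Schr\"odinger system solution (up to a scaling constant) then yields $\widehat\bP^{(k)} = \g^{(k)}(\db{X_T, \t_T}, \Lambda_T)\bR^{(k)}$, which is the claimed product form.

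With this, I would apply \cref{Thm: KL divergence comparison}. Statement (i) of the corollary coincides with (i) of the theorem, and is therefore equivalent to (iii) of the theorem: $\g^\1(\db{x,\t}, \d)$ is independent of $x$ for $\bR_T^\1$-a.e.~$\db{x,\t}$. Since $\g^\1 = d\rho_T^\1/d\bR_T^\1$, this $x$-independence says $d\rho_T^\1/d\bR_T^\1(\db{x,\t}, \d) = h(\t)$ for some function $h$. Integrating both sides of the identity $\rho_T^\1(\db{x,\t}, \d) = h(\t)\bR_T^\1(\db{x,\t}, \d)$ over $x \in \R^d$ and invoking the marginalization hypotheses ($\int_{\R^d} \rho_T^\1(\db{x,\t}, \d)dx = \rho_T^\2(\db{\c,\t}, \d)$, together with the analogous identity for $\bR$) yields $h(\t) = d\rho_T^\2/d\bR_T^\2(\db{\c,\t}, \d)$, which is (iii) of the corollary; the reverse implication is immediate. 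The equivalence (ii)$\iff$(iii) of the corollary is then straightforward: on the active regime, $\g^\1(\aug{x}, \a) = d\rho_T^\1/d\bR_T^\1(x, \a) = d\rho_T^\2/d\bR_T^\2(x, \a) = \g^\2(\aug{x}, \a)$ automatically from the shared active-regime data, so ``$\g^\1 = \g^\2$'' reduces in the dead regime exactly to the identity in (iii).

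The only real obstacle is notational: the equality $\g^\1 = \g^\2$ in (ii) must be interpreted as $\g^\1(\db{x,\t}, \d) = \g^\2(\db{\c,\t}, \d)$ for $\bR_T^\1$-a.e.~$\db{x,\t}$ (equivalently, under the push-forward by $\Phi$), given that the two functions have different natural domains in the dead regime. Once this convention is fixed, all three equivalences reduce to elementary manipulations of Radon-Nikodym derivatives.
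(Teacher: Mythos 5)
Your proposal is correct and follows essentially the same route the paper intends: it observes that $\bR_0=\rho_0=\delta_{x_0}$ forces the static system to have the solution $\f\equiv 1$, $\g=d\rho_T/d\bR_T$ (unique up to rescaling), and then specializes the equivalences of \cref{Thm: KL divergence comparison}, with the theorem's condition (iv) collapsing to the corollary's (iii) because $\f^\2\equiv 1$ makes the conditional expectation trivial. Your handling of the $x$-independence of $\g^\1(\cdot,\d)$ via marginalization and your remark on interpreting $\g^\1=\g^\2$ across the two dead-regime domains match the paper's (implicit) argument.
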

\noindent \textbf{Remark.} As a continuation of the remark above, we note that in the case $\bR^\1_0=\delta_{x_0}$, it is \textit{always} true that $\Phi_{\#}\widehat\bP^\1_{0T}=\widehat\bP^\2_{0T}$. To see this, set $\f^\1\equiv 1$ and $\g^\1 \equiv d\rho^\1_T /d\bR^\1_T$ and recall from \eqref{eqn:cross-regime density} that $
\bR^\1_T(\db{x,\t},\d)=\tilde q(0,x_0,x,\t)=V(x,\t)q(0,x_0,x,\t)$.
Applying \eqref{eqn: pushforward density}, we have that the Radon-Nikodym derivative  
$\,d\Phi_{\#}\widehat\bP^\1_{0T}\,/d\bR^\2_{0T}\,$ at the point 
$\left(\begin{array}{c} 
    (\aug{x_0},\a) \\ 
    (\aug{y},\a)
    \end{array} \right)$
is simply $\g^\1(\aug{y},\a)$, and thus equal to $\g^\2(\aug{y},\a)$. Further, 
\begin{equation*}%\label{eqn:f20T RN derivative}
\begin{aligned}
\frac{d\Phi_{\#}\widehat\bP^\1_{0T}}{d\bR^\2_{0T}}\left(\begin{array}{c} 
    (\aug{x_0},\a) \\ 
    (\db{\c,\t},\d)
    \end{array} \right)&=\bE_{\bR^\1_{0T}}\brac{\g^\1(\db{X_T,\t_T},\Lambda_T)\bigg|\t_T=\t,\Lambda_T=\d}\\
&=\frac{\int_{\R^d}q(0,x_0,\t,x)V(\t,x)\g^\1(\db{x,\t},\d)dx}{\int_{\R^d}q(0,x_0,\t,x)V(\t,x)dx}\\
&=\frac{\int_{\R^d}\rho_T^\1(\db{x,\t},\d)dx}{\int_{\R^d}q(0,x_0,\t,x)V(\t,x)dx}\;\textup{(because $\g^\1$ satisfies \eqref{eqn: Scenario (1) third equation Schrodinger system})}\\
&=\frac{\rho_T^\2(\db{\c,\t},\d)}{\bR_T^\2(\db{\c,\t},\d)}=\g^\2(\db{\c,\t},\d).
\end{aligned}
\end{equation*}
Thus, we have $\,d\Phi_{\#}\widehat\bP^\1_{0T}\,/d\bR^\2_{0T}=\f^\2\g^\2$, which implies $\Phi_{\#}\widehat\bP^\1_{0T}=\widehat\bP^\2_{0T}$.\\

\noindent We are now ready to answer the questions posed in the introduction of the section:\vspace{0.3cm}

\noindent (1) First, Observer 1, using the uSBP model from Scenario (1), cannot obtain a smaller KL divergence than Observer 2 using the model from Scenario (2).
This is because the diffusion with killing under $\widehat\bP^\2$ produces a spatial distribution $\rho^*_T(\db{x,\t},\d)$, which is the (unobserved) location distribution of the killed particles that minimizes $\KL{\bP}{\bR^\1}$ among all the distributions such that $\int_{\R^d}\rho_T(\db{x,\t},\d)dx=\rho^\2_T(\db{\c,\t},\d)$. In general, this $\rho^*_T$ needs not to be equal to the distribution $\rho^\1_T(\db{x,\t},\d)$ seen by Observer 1, unless (very unlikely!) it happens that \eqref{eqn:rho1 for equality} holds.\vspace{0.3cm}

\noindent (2) Likewise, Observer 2 should not adopt the uSBP framework from Scenario (1) with an arbitrary choice of target spatial distribution. Not only will it not match the true underlying $\rho^\1_T$, but it will also yield a greater KL divergence than the less restrictive approach of Scenario (2). The only exception is the special case of the deterministic initial condition. In this case, Observer 2 can compute $\rho^\1_T$ using the corollary above and then follow the uSBP setup from Scenario (1).

\subsection{Comparison Among Other Scenarios} By appropriately defining $\Phi$ between trajectory spaces, the results obtained in \cref{Thm: KL divergence comparison} can be extended to establish comparisons between any two uSBP scenarios. 

For the four Scenarios considered in this paper, we summarize the relations in the following theorem.
\begin{thm}
    Suppose that the SDE coefficients $b,\sigma$, the killing rate $V$, the initial data $\bR_0$, and the target distributions $\rho_0(\cdot,\a),\rho_T(\cdot,\a)$ are identical for uSBP Scenarios (1)-(4), and that the target distributions $\rho_T(\cdot,\d)$ satisfy for all $x\in\R^d, \t \in [0,T]$:
    \begin{equation*}
    \begin{dcases}
        \int_{\R^d}\rho^\1_T(\db{x,\t},\d)dx = \rho^\2_T (\db{\c,\t},\d), \\
        \int_0^T\rho^\1_T(\db{x,\t},\d)d\t = \rho^\3_T (x,\d) ,\\
        \int_0^T\rho^\2_T (\db{\c,\t},\d) d\t= \rho^\4_T(\c,\d),\\
        \int_{\R^d}\rho^\3_T (x,\d) dx= \rho^\4_T(\c,\d).
    \end{dcases}
    \end{equation*}
    Then, the following relations hold among uSBP Scenarios (1)-(4):
    \begin{equation*}
        \begin{aligned}
            \KL{\widehat\bP^\1}{\bR^\1} \geq \KL{\widehat\bP^\2}{\bR^\2}, \\
            \KL{\widehat\bP^\1}{\bR^\1} \geq \KL{\widehat\bP^\3}{\bR^\3}, \\
            \KL{\widehat\bP^\2}{\bR^\2} \geq \KL{\widehat\bP^\4}{\bR^\4}, \\
            \KL{\widehat\bP^\3}{\bR^\3} \geq \KL{\widehat\bP^\4}{\bR^\4},
        \end{aligned}
    \end{equation*}
    and
    \begin{equation*}
        \begin{aligned}
            \KL{\widehat\bP^\1}{\bR^\1} &= \KL{\widehat\bP^\2}{\bR^\2} \iff \g^\1(\db{x,\t},\d) \text{ does not depend on $x$},\\
            \KL{\widehat\bP^\1}{\bR^\1} &= \KL{\widehat\bP^\3}{\bR^\3} \iff \g^\1(\db{x,\t},\d) \text{ does not depend on $\t$},\\
            \KL{\widehat\bP^\2}{\bR^\2} &= \KL{\widehat\bP^\4}{\bR^\4} \iff \g^\2(\db{\c,\t},\d) \text{ is a constant function},\\
            \KL{\widehat\bP^\3}{\bR^\3} &= \KL{\widehat\bP^\4}{\bR^\4} \iff \g^\3(x,\d) \text{ is a constant function}.
        \end{aligned}
    \end{equation*}
\end{thm}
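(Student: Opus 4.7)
The plan is to mirror the strategy of \cref{Thm: KL divergence comparison} for each of the four comparisons. For each ordered pair $(k,k')\in\{(1,2),(1,3),(2,4),(3,4)\}$, I would define a measurable map $\Phi^{k\to k'}:\Omega^{(k)}\to\Omega^{(k')}$ that forgets from the dead-regime endpoint exactly the coordinate(s) that Scenario $(k')$ does not record: $\Phi^{1\to 2}$ collapses the spatial coordinate $x\mapsto\c$ of the killed particle (this is the map $\Phi$ of \eqref{dfn:Phi}), $\Phi^{1\to 3}$ discards the killing-time coordinate $\t$, $\Phi^{2\to 4}$ also discards $\t$ (with $x$ already equal to $\c$), and $\Phi^{3\to 4}$ collapses $x\mapsto\c$. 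In every case $\Phi^{k\to k'}$ is a bijection: since $\kt$ is determined by the regime component of the path and the spatial trajectory is continuous up to $\kt$, both the killing time and $X_{\kt-}$ can be recovered from the image, giving an explicit measurable inverse (exactly as described after \eqref{dfn:Phi}).

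By the shared assumption on $(b,\sigma,V,\bR_0)$, the regime-switching dynamics across Scenarios differ only in how the killed particle is represented, so $\Phi^{k\to k'}_\#\bR^{(k)}=\bR^{(k')}$. The proof of \cref{lem:KL of pushforward by Phi} goes through unchanged (it uses only bijectivity), yielding $\KL{\bP}{\bR^{(k)}}=\KL{\Phi^{k\to k'}_\#\bP}{\bR^{(k')}}$ for every $\bP\ll\bR^{(k)}$. The prescribed integration identities on $\rho_T^{(k)}(\cdot,\d)$ ensure that $\Phi^{k\to k'}_\#\widehat\bP^{(k)}$ has the marginals $(\rho_0,\rho_T^{(k')})$, so it is admissible for the $(k')$-uSBP. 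Hence
\[\KL{\widehat\bP^{(k)}}{\bR^{(k)}}=\KL{\Phi^{k\to k'}_\#\widehat\bP^{(k)}}{\bR^{(k')}}\geq\KL{\widehat\bP^{(k')}}{\bR^{(k')}},\]
with equality iff $\Phi^{k\to k'}_\#\widehat\bP^{(k)}=\widehat\bP^{(k')}$. This produces all four inequalities at once.

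For each equivalence I would repeat the $\f,\g$-level analysis. Applying \eqref{eqn: pushforward density} gives
\[\frac{d\Phi^{k\to k'}_\#\widehat\bP^{(k)}}{d\bR^{(k')}}(z)=\f^{(k)}(\tilde\omega(0))\g^{(k)}(\tilde\omega(T)),\quad\tilde\omega=(\Phi^{k\to k'})^{-1}(z).\]
Since $\tilde\omega(0)=z(0)$ automatically, this coincides $\bR^{(k')}$-a.s. with a product of the form $\f^{(k')}(z(0))\g^{(k')}(z(T))$ precisely when $\g^{(k)}(\tilde\omega(T))$ depends on $\tilde\omega(T)$ only through $z(T)$: that is, $\g^\1(\db{x,\t},\d)$ independent of $x$ for $(1)\!\to\!(2)$, of $\t$ for $(1)\!\to\!(3)$, and $\g^\2,\g^\3$ constant for $(2)\!\to\!(4)$ and $(3)\!\to\!(4)$. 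These are exactly the four stated conditions. Conversely, under each of these independence conditions, $(\f^{(k)},\g^{(k)})$ (read as a function of the reduced coordinates of Scenario $(k')$) satisfies the first two equations of \eqref{Equation: fg Schrodinger System} trivially, and the third is verified by a direct conditional-expectation computation that reduces, via the integration identity on $\rho_T^{(k)}$, to the third equation of \eqref{Equation: fg Schrodinger System} for Scenario $(k')$---this is the verbatim analog of the (iii)$\implies$(ii) step in the proof of \cref{Thm: KL divergence comparison}. Uniqueness of the Schr\"odinger system solution then gives $\Phi^{k\to k'}_\#\widehat\bP^{(k)}=\widehat\bP^{(k')}$.

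The main obstacle is purely bookkeeping: constructing $\Phi^{k\to k'}$ and its inverse carefully enough to confirm bijectivity, $\Phi^{k\to k'}_\#\bR^{(k)}=\bR^{(k')}$, and admissibility of the pushed-forward bridge in each of the four cases. No new probabilistic or analytic ingredient is required beyond what was already developed for the $(1)$ vs $(2)$ comparison; once the four bijections are in place, the KL computation and the $\g$-independence characterization transfer essentially verbatim.
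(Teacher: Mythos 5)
Your proposal is correct and follows exactly the route the paper intends: it defines coordinate-forgetting bijections $\Phi^{k\to k'}$ between the trajectory spaces for each of the four pairs and transfers the argument of \cref{Thm: KL divergence comparison} (the KL-preservation lemma, admissibility of the pushed-forward bridge, and the $\g$-independence characterization via the static Schr\"odinger system) essentially verbatim. The paper itself only sketches this ("by appropriately defining $\Phi$ between trajectory spaces\dots"), and your bookkeeping of the four maps, their inverses via recovering $\kt$ from $\Lambda$ and $X_{\kt-}$ as a left limit, and the role of the integration identities is accurate and complete.
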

In each of these cases, there exists a unique target distribution $\rho_T(\cdot,\d)$ that achieves the equality. As evidenced by the comparison between Scenarios (1) and (2), it is determined by an equation of the form \eqref{eqn:rho1 for equality}, which requires solving another SBP.
\section{Connection to Previous uSBP Works}\label{Section: connection to other paper}
\subsection{Removing the Constraint on Surviving Paths}
There are some uSBP problems that require modifications of the regime-switching approach proposed in this paper. Consider for example the uSBP in \cite{eldesoukey2024excursion}: as in our Scenario (1), the observer knows the initial distribution $\rho_0=\rho_0(\cdot,\a)$ and the spatio-temporal distribution of killed particles $\rho_T(\db{x,\t},\d)$, but \textbf{not} the spatial distribution of surviving particles $\rho_T(\cdot,\a)$. Consequently, the optimization \eqref{dfn: SBP-dynamic} is done over path measures such that $\bP_T(\cdot,\d)=\rho_T(\cdot,\d)$, but with no constraint on $\bP_T(\cdot,\a)$. We call this problem Scenario ($\star$) and define it as follows.
\begin{dfn}[Scenario ($\star$)]
    With $\bR,\rho_0,\rho_T$ as in \sce{1}, determine
    \[
    \widehat{\bP}^\star:=\arg\min\set{\KL{\bP}{\bR} \text{ such that } \bP_0=\rho_0,\bP_T(\cdot,\d)=\rho_T(\cdot,\d) }.
    \]    
\end{dfn}
  
At first glance, the regime-switching approach cannot handle this problem, because our setup assumes that the target distribution $\rho_T$ prescribes the \textit{entire} joint distribution over $\R^{d+1}\times\set{\a,\d}$. On the other hand, the results that we have obtained in our Scenario (1) match those of \cite{eldesoukey2024excursion}, with the only difference being that $\g(\cdot,\a)\equiv 1$ in \cite{eldesoukey2024excursion}, meaning that, for example,
\[
\varphi(t,x,\a)=\int_{\R^d}q(t,x,T,y)dy + \int_{\R^d}\int_t^T q(t,x,r,y)V(r,y)\g(\psi(r,y),\d)dr dy.
\]
Notably, the relation \eqref{eq: marginal density killed regime} and the dynamic Schr\"odinger system in \cref{Dynamic Schrodinger System} take the same form in our Scenario (1) and in \cite{eldesoukey2024excursion}.

Let us elucidate the connection between Scenario ($\star$) and \sce{1} formally. We start with the following lemma that relates $\widehat\bP^\star$ to $\widehat\bP^\1$.
\begin{lem}
     Suppose that the uSBP in Scenario $(\star)$ with the reference measure $\bR$ and the target distributions $\rho_0,\rho_T(\cdot,\d)$ (where $\rho_0$ is a probability measure and $\rho_T(\cdot,\d)$ is a sub-probability measure on $\R^d$) admits a unique solution $\widehat\bP^\star$. Then there exists a unique (sub-probability) measure $\rho_T(\cdot,\a)$ on $\R^d$ such that $\rho_T:=(\rho_T(\cdot,\a),\rho_T(\cdot,\d))$ is a probability measure on $\R^d\times\set{\a,\d}$, and $\widehat\bP^\star=\widehat\bP^\1$ where $\widehat\bP^\1$ is the Schr\"odinger bridge obtained in Scenario (1) with the reference measure $\bR$ and the target distributions $\rho_0,\rho_T$.
\end{lem}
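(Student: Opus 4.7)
The plan is to construct $\rho_T(\cdot,\a)$ as the active-regime terminal marginal of $\widehat\bP^\star$ and then argue by comparison of admissible sets. Specifically, I would define
\[
\rho_T(\cdot,\a) := \widehat\bP^\star_T(\cdot,\a),
\]
and set $\rho_T := \bigl(\rho_T(\cdot,\a),\rho_T(\cdot,\d)\bigr)$. The first thing to verify is that this yields a probability measure on $\R^d\times\set{\a,\d}$: since $\widehat\bP^\star$ is a probability measure on $\Omega$ and $\widehat\bP^\star_T(\cdot,\d)=\rho_T(\cdot,\d)$ by admissibility, the total mass of $\rho_T$ equals $\widehat\bP^\star_T(\R^d,\a)+\rho_T(\R^d,\d)=1$.

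Next, I would check that Scenario (1) with this newly defined $\rho_T$ is well-posed. By hypothesis $\KL{\widehat\bP^\star}{\bR}<\infty$, and since $\widehat\bP^\star_0=\rho_0$ and $\widehat\bP^\star_T=\rho_T$ by construction, the two-endpoint marginal satisfies $\KL{\widehat\bP^\star_{0T}}{\bR_{0T}}<\infty$ by the data-processing inequality \eqref{eqn: KL of pushforward inequality}. This is the relaxed form of Assumption \ref{Assumption A} mentioned in the excerpt, so the Scenario (1) uSBP with data $(\bR,\rho_0,\rho_T)$ admits a unique minimizer $\widehat\bP^{(1)}$.

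The core comparison is then straightforward. Let
\[
\cA^\star := \set{\bP\in\cP(\Omega)\,:\,\bP_0=\rho_0,\;\bP_T(\cdot,\d)=\rho_T(\cdot,\d)},\qquad
\cA^{(1)} := \set{\bP\in\cP(\Omega)\,:\,\bP_0=\rho_0,\;\bP_T=\rho_T}.
\]
Clearly $\cA^{(1)}\subseteq\cA^\star$ (Scenario (1) imposes the extra constraint $\bP_T(\cdot,\a)=\rho_T(\cdot,\a)$), which gives
\[
\inf_{\bP\in\cA^{(1)}}\KL{\bP}{\bR}\;\geq\;\inf_{\bP\in\cA^\star}\KL{\bP}{\bR}=\KL{\widehat\bP^\star}{\bR}.
\]
But by our choice of $\rho_T(\cdot,\a)$, we have $\widehat\bP^\star\in\cA^{(1)}$ as well, so the reverse inequality also holds. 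Hence $\widehat\bP^\star$ is a minimizer over $\cA^{(1)}$, and by uniqueness of $\widehat\bP^{(1)}$ we conclude $\widehat\bP^\star=\widehat\bP^{(1)}$.

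Finally, uniqueness of $\rho_T(\cdot,\a)$ is essentially automatic: if $\rho_T'(\cdot,\a)$ were another sub-probability measure such that $\bigl(\rho_T'(\cdot,\a),\rho_T(\cdot,\d)\bigr)$ is a probability on $\R^d\times\set{\a,\d}$ and the corresponding Scenario (1) solution coincides with $\widehat\bP^\star$, then the admissibility constraint in Scenario (1) forces $\rho_T'(\cdot,\a)=\widehat\bP^\star_T(\cdot,\a)=\rho_T(\cdot,\a)$. I do not anticipate major obstacles in this proof; the only delicate point is confirming that the relaxed version of Assumption \ref{Assumption A} suffices to invoke the existence/uniqueness and product-form representation for $\widehat\bP^{(1)}$ from the general SBP theory, which the excerpt explicitly permits.
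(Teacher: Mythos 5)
Your proof is correct and follows exactly the paper's own argument: the paper likewise defines $\rho_T(\cdot,\a):=\widehat\bP^\star_T(\cdot,\a)$ and notes that optimality in both scenarios (the Scenario (1) admissible set being the subset of the Scenario $(\star)$ admissible set that contains $\widehat\bP^\star$) forces $\widehat\bP^\1=\widehat\bP^\star$, with uniqueness of $\rho_T(\cdot,\a)$ immediate from the terminal constraint. Your write-up merely spells out the well-posedness check via the relaxed form of Assumption \ref{Assumption A}, which the paper leaves implicit.
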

\noindent The conclusion of the lemma follows from the simple observation that if $\rho_T(\cdot,\a):=\widehat\bP^\star_T(\cdot,\a)$, then the optimality in both scenarios ensures that $\widehat\bP^\1 = \widehat\bP^\star$, and this choice of $\rho_T(\cdot,\a)$ is obviously unique. Moreover, this lemma implies that $\widehat\bP^\star$ will inherit the properties of $\widehat\bP^\1$ as described in \cref{section: general results} and \cref{section: case-by-case results}. 

It still remains to explain why $\g(\cdot,\a)$ is a constant function for $\widehat\bP^\star$. We will demonstrate in the theorem below that this indeed is not a coincidence with the help of a properly defined pushforward of $\bR$.

\begin{thm}\label{thm: comparison to other paper}
    Let $\widehat\bP^\star$ be the Schr\"odinger bridge in Scenario $(\star)$ with the reference measure $\bR$ and the target distributions $\rho_0,\rho_T(\cdot,\d)$. 
    Then, there exist non-negative measurable functions $\f,\g$ such that $\widehat\bP^\star=\f(\db{X_0,\t_0},\Lambda_0)\g(\db{X_T,\t_T},\Lambda_T)\bR$ and $\g(\cdot,\a)\equiv 1$.
\end{thm}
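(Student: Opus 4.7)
The plan is to reformulate Scenario $(\star)$ as a classic balanced SBP by collapsing all active-regime terminal states into a single auxiliary ``cemetery'' point, and then to leverage the standard Schr\"odinger factorization on the collapsed space. Formally, I would introduce an abstract point $\star \notin \R^{d+1}$ and a measurable path map $\Psi: \Omega^\1 \to \widetilde\Omega$ that leaves $\omega$ unchanged on $[0,T)$ and on $\set{\omega(T) \in \R^{d+1}\times\set{\d}}$, but sends every terminal state of the form $(y,\a)$ to $(\star,\a)$. Let $\widetilde\bR := \Psi_\#\bR$ and let $\widetilde\rho_T$ be the probability measure on $(\R^{d+1} \sqcup \set{\star})\times\rSet$ defined by $\widetilde\rho_T(\star,\a) := 1 - \|\rho_T(\cdot,\d)\|$ and $\widetilde\rho_T(\cdot,\d) := \rho_T(\cdot,\d)$. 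The constraints $\bP_0 = \rho_0$ and $\bP_T(\cdot,\d) = \rho_T(\cdot,\d)$ defining Scenario $(\star)$ then translate exactly into $(\Psi_\#\bP)_0 = \rho_0$ and $(\Psi_\#\bP)_T = \widetilde\rho_T$.

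By the data-processing inequality \eqref{eqn: KL of pushforward inequality}, $\KL{\bP}{\bR} \geq \KL{\Psi_\#\bP}{\widetilde\bR}$ for any admissible $\bP$, with equality achieved precisely when the conditional law of $\omega$ under $\bP$ given $\Psi(\omega)$ agrees with that under $\bR$. Since $\bR$ is strong Markov by Assumption \ref{Assumption C}, this disintegration is well-defined, and the ``lift'' via the $\bR$-bridge provides an explicit right-inverse of $\Psi_\#$ on the saturated set. It follows that $\widehat\bP^\star$ must coincide with the lift of the minimizer of the balanced SBP
\[
\widehat{\widetilde\bP} := \arg\min\set{\KL{\widetilde\bP}{\widetilde\bR} : \widetilde\bP_0 = \rho_0,\; \widetilde\bP_T = \widetilde\rho_T}
\]
on $\widetilde\Omega$, which is well-posed under Assumption \ref{Assumption A}. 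By the classical Schr\"odinger factorization, $\widehat{\widetilde\bP} = \widetilde\f(\omega(0))\,\widetilde\g(\omega(T))\,\widetilde\bR$ for suitable non-negative measurable $\widetilde\f, \widetilde\g$.

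Lifting back to $\Omega^\1$ and unwinding definitions, we obtain $\widehat\bP^\star = \widetilde\f(\omega(0))\,\widetilde\g(\Psi(\omega(T)))\,\bR$. The crucial observation is that whenever $\Lambda_T(\omega) = \a$, $\Psi(\omega(T)) = (\star,\a)$, so $\widetilde\g(\Psi(\omega(T))) = \widetilde\g(\star,\a)$ reduces to a single scalar constant. Setting $\f := \widetilde\g(\star,\a)\,\widetilde\f$, $\g(y,\a) := 1$ and $\g(y,\d) := \widetilde\g(y,\d)/\widetilde\g(\star,\a)$ then yields the desired factorization $\widehat\bP^\star = \f(\db{X_0,\t_0},\Lambda_0)\,\g(\db{X_T,\t_T},\Lambda_T)\,\bR$ with $\g(\cdot,\a) \equiv 1$.

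The main obstacle is the rigorous justification of the reduction to the balanced SBP on $\widetilde\Omega$: specifically, that $\Psi_\#$ is surjective onto admissible balanced measures modulo $\bR$-bridges, and that the KL inequality is achieved uniquely by the lift. Both of these are standard consequences of the disintegration theorem and the chain rule for KL divergence applied to the Markov reference measure $\bR$. Once this reduction is secured, the conclusion follows directly from the classical SBP factorization, paralleling the Scenario (1) theory established earlier.
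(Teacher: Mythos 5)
Your proposal is correct and follows essentially the same route as the paper: the map $\Psi$ collapsing all surviving terminal states to a single auxiliary point is exactly the paper's $\Phi^\dagger$ (with an abstract point $\star$ in place of $\c$), and in both arguments the constancy of $\g(\cdot,\a)$ drops out because every surviving path is sent to the same terminal state, so the collapsed factorization's $\widetilde\g$ contributes only a scalar there. The only cosmetic difference is that you invoke the data-processing inequality plus a disintegration/lift, whereas the paper simply notes that the collapse map is a measurable bijection (the terminal value of a surviving path is recoverable as its left limit), which makes the KL equality and the pullback of the factorization immediate.
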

\begin{proof}
    Let $\widehat\bP^\1$ be the Schr\"odinger bridge from Scenario (1) with the reference measure $\bR$ and the target distributions $\rho_0$ and $\rho_T=(\widehat\bP^\star_T(\cdot,\a),\rho_T(\cdot,\d))$. As stated in the lemma above, $\widehat\bP^\1=\widehat\bP^\star$. We also know that $\widehat\bP^\1=\f^\1(\db{X_0,\t_0},\Lambda_0)\g^\1(\db{X_T,\t_T},\Lambda_T)\bR$, where $(\f^\1,\g^\1)$ is the unique solution of the Schr\"odinger system in Scenario (1). We will show that $\g^\1(\cdot,\a)$ is a constant function, and the statement of the theorem will then follow. 
    
    We begin by defining an appropriate trajectory space. 
    Let $\Omega^\dagger$ be the set of c\`adl\`ag paths $t\in[0,T]\mapsto\omega(t)=(\db{X_t,\t_t},\Lambda_t)$ such that:
    \begin{itemize}%\label{dfn:omega-dagger}
    \item either, $\Lambda_t=\a$ for all $t\in [0,T]$, in which case $\db{X_t,\t_t}$ is continuous on $[0,T)$, and $\db{X_T,\t_T}=\db{\c,0}$. 
    \item or, there exists a time $\kt=\kt(\omega)\in[0,T]$ such that $\Lambda_t=\a$  for $t\in[0,\kt)$ and $\Lambda_t=\d$ for $t\in[\kt,T]$, in which case $\db{X_t,\t_t}$ is continuous on $[0,\kt)$ and $\db{X_t,\t_t}=\db{X_{\kt-},\kt}$ for all $t\in[\kt,T]$.
\end{itemize}
Similarly as in $\Omega^\1$, the killed paths in $\Omega^\dagger$ are also frozen in ``time and space'' upon killing, but different from $\Omega^\1$, the paths that survived in $\Omega^\dagger$ are all collapsed into a singular point $\c$ at the terminal time $T$. This naturally leads to the mapping  $\Phi^\dagger:\Omega^\1 \to \Omega^\dagger$ as follows:
\begin{equation*}
\Phi^\dagger(\omega)(t):=
\begin{dcases}
(\aug{\c},\a) & \text {if }t=T,\Lambda_T=\a,   \\  
\omega(t)   & \text {otherwise. } 
\end{dcases} 
\end{equation*}
This map clearly eliminates the constraint on the terminal distribution of surviving particles, while still being able to almost surely reconstruct the original paths. Note that just like $\Phi$ in the previous section, $\Phi^\dagger$ is a bijection, and its inverse is given by:
\begin{equation*}
(\Phi^\dagger)^{-1}(\omega)(t):=
\begin{dcases}
\omega(t-) & \text {if }t=T,\Lambda_T=\a,   \\  
\omega(t)   & \text {otherwise. } 
\end{dcases} 
\end{equation*}

Define $\bR^\dagger:=\Phi^\dagger_{\#}\bR$ and $\rho^\dagger_0:=\rho_0$, $\rho^\dagger_T(\cdot,\d):=\rho_T(\cdot,\d)$ and let $\rho^\dagger_T(\cdot,\a)$ be the Dirac delta distribution at $\aug{\c}$ such that $\int_0^T\int_{\R^d}\rho_T(\db{x,\t},\d)dxd\t+\rho^\dagger_T(\aug{\c},\a)=1$. Consider the SBP with the reference measure $\bR^\dagger$ and the target distributions $\rho_0^\dagger,\rho_T^\dagger$, called SBP-$\dagger$. Here, $\bR^\dagger$ is not the path measure of a regime-switching diffusion, but the general SBP theory still applies. In particular, the solution $\widehat\bP^\dagger$ satisfies
\begin{equation}\label{eqn:fg format of P^dagger}
\widehat\bP^\dagger=\f^\dagger(\db{X_0,\t_0},\Lambda_0)\g^\dagger(\db{X_T,\t_T},\Lambda_T)\bR^\dagger,
\end{equation}
where $\f^\dagger,\g^\dagger$ are non-negative measurable functions that solve the Schr\"odinger system \eqref{Equation: fg Schrodinger System} for $(\bR^\dagger,\rho_0^\dagger,\rho_T^\dagger)$.
Clearly, if $\bP$ is admissible in Scenario ($\star$), i.e., $\bP_0=\rho_0$ and $\bP_T(\cdot,\d)=\rho_T(\cdot,\d)$, then $\bP^\dagger:=\Phi^\dagger_{\#}\bP$ is admissible for SBP-$\dagger$. Further, since $\Phi^\dagger$ is a bijection, the same argument as in \cref{lem:KL of pushforward by Phi} leads to $\KL{\bP^\dagger} {\bR^\dagger}=\KL{\bP}{\bR}$. Likewise, if $\bP^\dagger$ is a path measure on $\Omega^\dagger$ that is admissible for SBP-$\dagger$, then $\bP:=(\Phi^\dagger)^{-1}_{\#}\bP^\dagger$ is admissible for Scenario ($\star$), and $\KL{\bP}{\bR}=\KL{\bP^\dagger}{\bR^\dagger}$. As a result, it must be that
\[
\widehat\bP^\dagger=\Phi^\dagger_{\#}\widehat\bP^\star=\Phi^\dagger_{\#}\widehat\bP^\1.
\]

    By the same argument as in the proof of \cref{Thm: KL divergence comparison}, applying \eqref{eqn: pushforward density}, if $z\in\Omega^\dagger$ and $\tilde{\omega}:=(\Phi^\dagger)^{-1}(z)\in\Omega^\1$, then
    \[
    \frac{d\widehat\bP^\dagger}{d\bR^\dagger}(z)
    =\bE_{\bR}\brac{\frac{d\widehat\bP^\1}{d\bR} \bigg| \Phi^\dagger=z}=\frac{d\widehat\bP^\1}{d\bR}(\tilde{\omega})=\f^\1(\tilde{\omega}(0))\g^\1(\tilde{\omega}(T)).
    \]
Combining with \eqref{eqn:fg format of P^dagger}, we have that \[
\f^\dagger(z(0))\g^\dagger(z(T))=\f^\1(\tilde{\omega}(0))\g^\1(\tilde{\omega}(T)).
\]
To obtain information on $\g^\1(\cdot,\a)$, we are interested in the Radon-Nikodym derivative for paths $z$ that end in the active regime $\a$. Observe that if $\Lambda_T=\a$, then $\tilde{\omega}(0)=z(0)=(\aug{x},\a)$ and $\tilde{\omega}(T)=(\aug{y},\a)$ for some $x,y\in\R^d$. Hence for Lebesgue-a.e. $x,y$,
\[
\f^\dagger(\aug{x},\a)\g^\dagger(\aug{\c},\a)=\f^\1(\aug{x},\a)\g^\1(\aug{y},\a),
\]
and thus $\g^\1(\cdot,\a)$ is a positive constant, which we denote by $c$.

To complete the proof, define $\f:=c\,\f^\1$ and $\g:=\g^\1/c$. Then, we have in Scenario ($\star$), $\widehat\bP^\star=\f(\db{X_0,\t_0},\Lambda_0)\g(\db{X_T,\t_T},\Lambda_T)\bR$, where $\g(\cdot,\a)\equiv 1$.
\end{proof}
We have shown that removing the constraint $\bP_T(\cdot,\a)=\rho_T(\cdot,\a)$ in Scenario (1) preserves the overall structure of the unbalanced Schr\"odinger bridge, with $\g(\cdot,\a)\equiv 1$.
In another related uSBP work \cite{eldesoukey2025inferring}, similarly to Scenario ($\star$), the authors considered uSBPs as in our Scenarios (2) and (3), but with no constraint $\rho_T(\cdot,\a)$. As in Scenario ($\star$), it is easy to see that these unbalanced Schr\"odinger bridges inherit properties of $\widehat\bP$ from Scenarios (2) and (3) respectively, and repeating the argument in \cref{thm: comparison to other paper} will lead to $\g(\cdot,\a)\equiv 1$.
\subsection{Further Strengths of the Regime-Switching Approach to the uSBP}%\label{Subsection: uSBP discussion}
To conclude the analysis of the uSBP, let us discuss further advantages of the regime-switching approach and go beyond the four scenarios studied above. 

\subsubsection{Incomplete observational data}
First, as an easy extension, consider the situation where the spatial observation is ``foggy'', in the sense that the observer can see the killing location only if the killing time is in some Borel $E\subseteq [0,T]$. For example, suppose that the observer cannot see where the particle was killed if the event occurred too early, say at $t\in[0,T/2]$, or a physical obstacle is blocking the observation for $t$ in some interval $[t_1,t_2]$. To model this uSBP scenario, it suffices to set $\psi(t,x)=\one_E(t) \db{x,t} + \one_{E^c}(t) \db{\c,t}$, and the general results of \cref{section: general results} apply.

\subsubsection{Two marginal distributions in the dead regime}
Next, consider a mix of Scenarios (2) and (3) in the following way: at every $t\in[0,T]$, the observer knows the amount of mass lost, but not its spatial distribution; further, at time $T$, the observer has access to the overall (cumulative) spatial distribution of killed particles. In other words, if $\rho_T(\db{x,\t},\d)$ is the full spatio-temporal distribution of killed particles, the observation provides the two marginals $\int_0^T \rho_T(\db{x,\t},\d) d\t$ and $\int_{\R^d} \rho_T(\db{x,\t},\d) dx$.  In the spirit of the SBP, the objective is to match these observations while minimizing KL divergence to a reference measure. As in the case with Scenario $(\star)$, our regime-switching approach cannot model this behavior directly, and some modifications are required. This is, once again, a ``SBP within a SBP'': for every choice of $\rho_T(\db{x,\t},\d)$ in Scenario (1) that satisfies these marginal constraints, we obtain a corresponding $\widehat\bP^\1$, and then we must choose the one with smallest KL divergence to $\bR^\1$.
However, another possibility in modeling this problem is to use three regimes $\a,\d_1,\d_2$, with active particles switching to regimes $\d_1$ and $\d_2$ at a rate $V/2$ each. A particle switching to $\d_1$ would be transported to $\psi_1 = \db{x,0}$ and a particle switching to $\d_2$ would be transported to $\psi_2=\db{\c,\t}$. The target distribution $\rho_T(\cdot,\d_1)$ would be the observed distribution of the killing locations, and the target distribution $\rho_T(\cdot,\d_2)$ would be the observed distribution of the killing times. Then, one could restrict the set of admissible measures $\bP$ to be those such that, in addition to satisfying the target distributions $(\rho_0,\rho_T)$, the switching rates $\a \to \d_1$ and $\a \to \d_2$ under $\bP$ would be equal. If using the stochastic control formulation, with $\xi_1,\xi_2$ being the controls of the switching rates $\a \to \d_1$ and $\a \to \d_2$ respectively, one would impose the additional constraint that $\xi_1=\xi_2$.

\subsubsection{Death and revival}
Finally, suppose that ``revival'' is possible, that is, 
\[
\lim_{\eps \downarrow 0}\frac{1}{\eps}\bR(\Lambda_{t+\eps}= \a | \Lambda_t = \d, X_t=x) = Q(t,x),
\]
where $Q\geq0$ is the revival rate. In addition, a particle that switches regimes $\d\to\a$ at time $t$ jumps to a new location $\tilde{\psi}(t,X_{t-})$.

Here, a broad range of observation scenarios can be modeled by using the dimension augmentation technique and varying $\psi,\tilde{\psi}$ appropriately. To give some examples, by sufficiently augmenting the dimension, the additional ``information tracking'' coordinates can record:
\begin{itemize}
    \item the number of regime-switches $i\to j$,
    \item the position and/or the time of the first/last $n$ regime-switches $i\to j$,
    \item the mean time between regime switches,
    \item the number of deaths that occurred in a specific $B\subseteq \R^d$, etc.
\end{itemize}
This flexibility is the result of a combination of two factors. First, although the dynamics of the augmented coordinates are only indirectly driven by the regime-switching diffusion, they are still part of its spatial dimension. Second, the general model of the SBP for regime-switching (jump) diffusions in \cite{zlotchevski2025schrodinger} allows $\psi,\tilde{\psi}$ to depend on the spatial variable including the augmented dimensions. Therefore, the augmented coordinates not only can ``read'' the information contained in the current state of the process $(t,X_t)$, but can also access the information contained in all the augmented coordinates themselves. For example, if we would like the augmented coordinate $\mathfrak{n}$ to count the number of times that a particle has been killed, then we can initialize $\mathfrak{n}=0$ at time $t=0$ and set $\psi(t,\db{x,\mathfrak{n}})=\db{x,\mathfrak{n}+1}$. If we also want to count the number of revivals, then we can add a second augmented coordinate $\mathfrak{m}$, set $\mathfrak{m}=0$ at time $t=0$, and choose $\psi(t,\db{x,\mathfrak{n},\mathfrak{m}})=\db{x,\mathfrak{n}+1,\mathfrak{m}}$ and $\tilde{\psi}(t,\db{x,\mathfrak{n},\mathfrak{m}})=\db{x,\mathfrak{n},\mathfrak{m}+1}$. By doing so, we can impose additional constraints in the uSBP through the target distribution $\rho_T$ in the augmented dimensions.

\bibliographystyle{plain} 
\bibliography{references}

\end{document}